\newcommand{\C}{\mathbb{C}}
\newcommand{\N}{\mathbb{N}} 
\newcommand{\R}{\mathbb{R}}
\newcommand{\Z}{\mathbb{Z}}
\newcommand{\tg}{\tilde{g}} 
\newcommand{\Sph}{\mathbb{S}}
\newcommand{\re}{\mathbb R}
\newcommand{\ce}{\mathbb C}
\newcommand{\<}{\left<}
\newcommand{\ds}{\displaystyle}
\renewcommand{\>}{\right>}
\newcommand{\flecha}{\longrightarrow}
\newcommand{\gorro}{\widetilde}
\newcommand{\mc}[1]{\mathcal{#1}}
\newcommand{\Rm}{{\rm R}}
\def\vle{{ vol}}
\def\dim{{\rm dim}}
\def\fle{\rightarrow}
\def\parcial#1#2{\fracc{\partial #1}{\partial#2}}
\def\({\left (}
\def \){\right)}
\newtheorem{mainthm}{Theorem}[]
\newtheorem{maincor}[mainthm]{Corollary}
\newtheorem{thm}{Theorem}[section]
\newtheorem*{thm*}{Theorem} 
\newtheorem{cor}[thm]{Corollary}\newtheorem{prop}[thm]{ Proposition}
\newtheorem{lem}[thm]{Lemma}
\theoremstyle{definition}
\newtheorem{defin}[thm]{Definition}
\newtheorem*{rems*}{Remarks}
\theoremstyle{remark}
\newtheorem{rem}[thm]{Remark}
\newcommand{\eps}{\ensuremath{\varepsilon}}
\newtheorem{teor}{\hspace{12pt} Theorem}
\newtheorem{lema}[teor]{\hspace{12pt} Lemma}
\numberwithin{teor}{section}
\newcommand{\be}{\begin{enumerate}}
\newcommand{\ee}{\end{enumerate}}
\newcommand{\bi}{\begin{itemize}}
\newcommand{\ei}{\end{itemize}}
\newcommand{\bd}{\begin{description}}
\newcommand{\ed}{\end{description}}
\newcommand{\bec}{\begin{equation}}
\newcommand{\eec}{\end{equation}}
\newcommand{\ba}{\begin{array}}
\newcommand{\ea}{\end{array}}
\newcommand{\bt}{\begin{thm}}
\newcommand{\et}{\end{thm}}
\newcommand{\bdem}{\begin{proof}}
\newcommand{\edem}{\end{proof}}
\newcommand{\bl}{\begin{lema}}
\newcommand{\el}{\end{lema}}
\newcommand{\bnp}{\begin{rem}}
\newcommand{\enp}{\end{rem}}
\newcommand{\bde}{\begin{defin}}
\newcommand{\ede}{\end{defin}}
\newcommand{\bnod}{\begin{rem}}
\newcommand{\enod}{\end{rem}}
\newcommand{\bp}{\begin{prop}}
\newcommand{\ep}{\end{prop}}
\newcommand{\bco}{\begin{cor}}
\newcommand{\eco}{\end{cor}}
\newcommand{\nn}{\nonumber}
\newcommand{\lb}{\label}
\newcommand{\ccdot}{\, \cdot \,}
\newcommand{\gU}{\mathsf{U}}
\newcommand{\dc}{\dot{c}}
\newcommand{\dgamma}{\dot{\gamma}}
\DeclareMathOperator{\Ric}{Ric}
\DeclareMathOperator{\scal}{scal}
\DeclareMathOperator{\Or}{O}\DeclareMathOperator{\Iso}{Iso}
\DeclareMathOperator{\SO}{SO}
\DeclareMathOperator{\vol}{vol}
 \DeclareMathOperator{\diam}{diam}
\newcommand{\ml}{\langle}                     
\newcommand{\mr}{\rangle}                     
\newcommand{\tf}{\tilde{f}}
\newcommand{\bg}{\bar{g}}
\newcommand{\tb}{\tilde{b}}
\newcommand{\tD}{\widetilde{D}}
\newcommand{\ene}{\end{equation} }
\newcommand{\lG}{\mathsf{G}}
\newcommand{\tq}{\tilde{q}}
\def\l{\lambda}
\renewcommand{\(}{\left(}
\renewcommand{\>}{\right>}
\renewcommand{\)}{\right)}
\def\bal{\begin{align}}
\def\eal{\end{align}}
\numberwithin{equation}{section}
\def\be{\begin{equation}}
\def\ee{\end{equation}}
\def\vle{{\rm vol}}
\def\parcial#1#2{\frac{\partial #1}{\partial#2}}
\def\flecha{\longrightarrow}
\def\fle{\rightarrow}
\def\ds{\displaystyle}
\begin{document}

\markright{}


\pagestyle{myheadings}

\title{How to produce a Ricci Flow  via Cheeger-Gromoll exhaustion}

\author{ Esther Cabezas-Rivas and Burkhard Wilking}
\dedicatory{Dedicated to Wolfgang T. Meyer on the occasion of his 75th birthday}

\maketitle

\begin{abstract}
We prove short time existence for the Ricci flow on open mani\-folds 
of nonnegative complex sectional curvature. 
We do not require upper curvature bounds. 
By considering the doubling of  convex sets
contained in a  Cheeger-Gromoll convex exhaustion 
and solving  the singular initial value problem 
for the Ricci flow on these closed manifolds, we obtain a sequence 
of closed solutions of the Ricci flow with nonnegative 
complex sectional curvature  
which subconverge  to a solution of the Ricci 
flow 
on the open manifold.  Furthermore, we find an optimal volume growth condition which guarantees long time existence, and we give an analysis of the long time 
behaviour of the Ricci flow.
Finally, we construct an explicit example of an immortal nonnegatively curved solution of the Ricci flow with unbounded curvature for all time.
\end{abstract}

\section{Introduction and main results}

The Ricci flow was introduced by R.~Hamilton in \cite{ham3D} as a method to deform or evolve a Riemannian metric $g$ given on a fixed n-dimensional manifold $M$ according to the following partial differential equation:
\bec \lb{RF}
\parcial{}{t} g(t) = - 2 {\rm Ric}(g(t))
\eec
over a time interval $I \subset \mathbb R$, with the initial
condition $g(0) = g$. The first basic question, without which a theory about the Ricci flow does not even make sense, is to ensure that equation \eqref{RF} admits a solution at least for a short time. This was already completely settled for closed manifolds (i.e.~compact and without boundary) by Hamilton in \cite{ham3D}. In dimension 2, short time existence for a non-compact surface (which may be incomplete and with curvature unbounded above and below) was established by Giesen and Topping in \cite{GieTo1} using ideas from \cite{Topping}.

The non-compact case for $n \geq 3$, even asking the manifold to be complete, is much harder and in full generality appears to be hopeless: for instance, it is difficult to imagine how to construct a solution to \eqref{RF} starting at a manifold built by attaching in a smooth way spherical cylinders with radius becoming smaller and smaller (say of radius $1/k$ with $k \in \mathbb N$).  Hence to achieve short time existence one needs to prevent similar situations by adding extra conditions on the curvature. In this spirit, W.~X.~Shi proved in \cite{Shi1} that the Ricci flow starting on an open (i.e.~complete and non-compact) manifold  with bounded curvature  (i.e.~with $\sup_M |{\Rm}_g| \leq k_0<\infty$) admits a solution for a time interval $[0, T(n, k_0)]$ also with bounded curvature. 

Later on M.~Simon (cf.~\cite{Simon}), assuming further that the manifold has nonnega\-tive curvature operator ($\Rm_g \geq 0$) and is non-collapsing 
($\inf_M \vle_g \(B_g(\cdot, 1)\) \geq v_0 > 0$), was able to extend Shi's solution for a time interval $[0, T(n, v_0)]$, with curvature bounded above by $\frac{c(n, v_0)}{t}$ for positive times.  Although  $T(n, v_0)$ does not depend on an upper curvature 
bound, such a bound  is still an assumption needed to guarantee short time existence.

The present paper manages to remove any restriction on upper curvature bounds for  open manifolds
with nonnegative complex sectional curvature (see Definition~\ref{complex curv}) which, by Cheeger, Gromoll and Meyer \cite{ChGr, GroMe}, admit  an exhaustion by convex sets $C_\ell$. We are  able to construct 
a Ricci flow with nonnegative complex sectional curvature
on the closed manifold obtained by gluing two copies of  $C_\ell$
along the common boundary, and 
whose \lq initial metric' is the natural singular metric on the double.
By passing to a limit we obtain

\begin{mainthm} \lb{mainT}
Let $(M^n, g)$ be an open manifold with nonnegative (and possibly unbounded) complex sectional curvature ($K^\ce_g \geq 0$).
Then there exists a constant $\mc T$ depending on $n$ 
and $g$ such that \eqref{RF} has a smooth solution 
on the interval $[0, \mc{T}]$, with $g(0) = g$ and with
 $g(t)$ having nonnegative complex sectional curvature.
\end{mainthm}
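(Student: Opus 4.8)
The plan is to realise $(M,g)$ as an increasing union of convex bodies, double each to a closed manifold, solve a singular initial value problem for \eqref{RF} on each, and then let the exhaustion pass to the limit; the technical heart is the construction of, and the uniform-in-$\ell$ estimates for, these singular flows.

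\textbf{Exhaustion and doubling.} Since $K^\ce_g\ge 0$ forces $\se_g\ge 0$, the Cheeger--Gromoll--Meyer convex exhaustion \cite{ChGr,GroMe} gives $C_1\subset C_2\subset\cdots$ with $\bigcup_\ell C_\ell=M$, each $C_\ell$ a compact convex body with $C^{1,1}$ boundary whose inner second fundamental form is nonnegative in the support sense. Let $D_\ell:=C_\ell\cup_{\partial C_\ell}C_\ell$ be the double, with the metric $\bar g_\ell$ glued from two copies of $g|_{C_\ell}$: it is smooth off the seam $\Sigma_\ell=\partial C_\ell$ and Lipschitz across it, but the two sheets meet along $\Sigma_\ell$ with opposite outer normals, so their second fundamental forms add with the good sign and $\bar g_\ell$ turns out to have nonnegative complex sectional curvature in the appropriate barrier sense. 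For $\ell$ large, $D_\ell$ contains an isometric copy of a neighbourhood of any fixed compact $\Omega\Subset M$, on which $\bar g_\ell$ agrees with $g$, so the $(D_\ell,\bar g_\ell)$ exhaust $(M,g)$ on compact sets.

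\textbf{Singular Ricci flow on the doubles.} On each closed manifold $D_\ell$ one must produce a smooth solution $g_\ell(\cdot)$ of \eqref{RF} on an interval $(0,T_\ell]$ with $g_\ell(t)\to\bar g_\ell$ as $t\downarrow 0$ (in $C^0$, smoothly away from $\Sigma_\ell$) and $K^\ce_{g_\ell(t)}\ge 0$ for $t>0$. The natural route is mollification: approximate $\bar g_\ell$ by smooth metrics with $K^\ce\ge-\eps$, apply Hamilton's closed-manifold short time existence to each, use that a Hamilton-type invariant cone slightly larger than $\{K^\ce\ge 0\}$ is preserved along \eqref{RF} to keep the curvature almost nonnegative, extract a priori estimates uniform in $\eps$, and pass to the limit $\eps\downarrow 0$; preservation of the exact cone $\{K^\ce\ge 0\}$ then holds for positive times.

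\textbf{Uniform estimates --- the crux.} A naive appeal to Hamilton's theorem on $(D_\ell,\bar g_\ell)$ fails because $\sup_{D_\ell}|\Rm_{\bar g_\ell}|\to\infty$ as $\ell\to\infty$ (the seam aside), so the times $T_\ell$ above could degenerate; what must be proved is a local interior estimate for Ricci flows with $K^\ce\ge 0$: there are $\mc T=\mc T(n,g)>0$ and, for each $\Omega\Subset M$, a constant $C_\Omega$ with
\[
T_\ell\ \ge\ \mc T\qquad\text{and}\qquad \sup_{\Omega\times(0,\mc T]}|\Rm_{g_\ell(t)}|\ \le\ C_\Omega\qquad\text{for all large }\ell .
\]
One expects to obtain this by a point-picking/blow-up contradiction: a sequence of space-time points where $t\,|\Rm|$ blows up would, after rescaling, subconverge --- here $K^\ce\ge 0$ is stable under rescaling and under taking limits --- to a complete ancient Ricci flow with $K^\ce\ge 0$, and, combined with Hamilton's distance-distortion estimate and the fact that one starts from the essentially unchanged local geometry of $(M,g)$ near $\Omega$, this rules out curvature concentrating within time $\mc T$. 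I expect this step to be the main obstacle of the argument.

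\textbf{Passing to the limit.} For $\ell$ large $g_\ell(t)$ is defined on a fixed neighbourhood of $\Omega$, where $\Ric_{g_\ell(t)}\ge 0$ gives $g_\ell(t)\le\bar g_\ell=g$, while the curvature bound above controls $|\Ric_{g_\ell(t)}|$ and hence, by Gr\"onwall, $e^{-C_\Omega t}\,g\le g_\ell(t)\le g$, and Shi's local derivative estimates bound all covariant derivatives of $\Rm_{g_\ell(t)}$ on $\Omega\times[\tau,\mc T]$ for every $\tau>0$. By Arzel\`a--Ascoli a subsequence converges in $C^\infty_{\mathrm{loc}}\big(M\times(0,\mc T]\big)$ --- on the fixed manifold $M$, with no diffeomorphisms needed --- to a smooth family $g(t)$, $t\in(0,\mc T]$, solving \eqref{RF}; the two-sided bound forces $g_\ell(t)\to g$ uniformly on compact sets as $t\downarrow 0$, hence $g(t)\to g$, so putting $g(0)=g$ yields a solution on $[0,\mc T]$. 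Finally $\{K^\ce\ge 0\}$ is closed under $C^\infty_{\mathrm{loc}}$ convergence, so $K^\ce_{g(t)}\ge 0$ for all $t\in[0,\mc T]$, which completes the proof.
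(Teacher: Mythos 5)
Your overall architecture (convex exhaustion, doubling, singular initial value problem on the doubles, limit) is the paper's, but the two steps you compress into a sentence each are exactly where the proof lives, and as written they contain a genuine error and a genuine gap. First, the smoothing step. There is no Ricci-flow-invariant cone ``slightly larger than $\{K^\ce\ge 0\}$'': the condition $K^\ce\ge -\eps$ is \emph{not} preserved by \eqref{RF}, so mollifying $\bar g_\ell$ to metrics with $K^\ce\ge-\eps$ and invoking cone preservation does not work. What the paper does instead is quantitative: it proves (via supporting hypersurfaces and Riccati comparison, Proposition~\ref{AR32} and Corollary~\ref{est_Hesf}) a lower bound on the Hessian of $d^2(\cdot,C_\ell)$ of the form $-\lambda f^{3/2}$, builds graphical smoothings $D_{\ell,k}$ of the double whose principal curvatures lie in $[-b/k^2,\,Bk]$, so that $K^\ce_{g_{\ell,k}}\ge -\tb/k$ while $|\Rm_{g_{\ell,k}}|\le \tilde Bk^2$, and then shows (Proposition~\ref{prop:curv_est}) that the lower bound only deteriorates to $-1/\sqrt{k}$ on a time interval $[0,T_\ell]$ independent of $k$. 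That last step is itself nontrivial: the lower curvature bound degrades like $\exp(\int_0^t u)$ with $u\sim |\Rm|$, so one needs $|\Rm_{g_{\ell,k}(t)}|\lesssim 1/t$ first, which the paper extracts from almost-Euclidean volume of small balls in $D(C_\ell)$ for \emph{large} $\ell$ (Lemma~\ref{GK_ap}, Proposition~\ref{curv_est_NN}); this is why the construction only works for $\ell\ge\ell_0$. None of this is replaced by anything in your proposal.

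Second, the uniform lifespan and interior estimates. You correctly identify this as the crux but the blow-up argument you sketch is missing its two essential inputs. A point-picking rescaling only subconverges if you have an injectivity radius (equivalently, volume) lower bound at the chosen points; the paper obtains this from Petrunin's estimate $\int_{B(p,1)}\scal\,d\mu\le C_n$, which controls the volume loss of unit balls around the soul point and simultaneously yields the uniform lower bound $T_\ell\ge\mc T(n,\vle_g(B_g(p_0,1)))$. And the contradiction at the end of the blow-up is not ``Hamilton's distance-distortion estimate'': the limit is a complete non-flat ancient solution with $K^\ce\ge 0$, bounded curvature and \emph{positive} asymptotic volume ratio, which contradicts the improved version of Perelman's 11.4 (Lemma~\ref{AVR0}, requiring Brendle's Harnack inequality and the Brendle--Schoen strong maximum principle in place of Hamilton's). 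Without these two ingredients your contradiction argument does not close. Two further points you skip: smoothness of the limit solution \emph{at} $t=0$ requires upgrading the $C/t$ bound to a uniform bound near $t=0$ (Theorem~\ref{SURF} plus the Lu--Tian version of Shi's estimates), not just $C^0$ convergence $g_\ell(t)\to g$; and when the soul is not a point (or $M$ is not simply connected) one needs the splitting Theorem~\ref{splitting} and an equivariant construction to reduce to the point-soul case -- among other things, the sublevel sets are then not disks and the argument for positivity of $K^\ce$ at positive times on the doubles (Proposition~\ref{hol}, used to control the extinction behaviour) is unavailable.
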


Using that by Brendle \cite{BreHar} the trace 
Harnack inequality in \cite{HamHar} holds for compact mani\-folds with $K^\ce \geq 0$, it follows that the above solution 
on the open manifold satisfies the trace Harnack estimate as well.
This solves an open question posed by Chow, Lu and Ni \cite[Problem 10.45]{CLN}.

The proof of Theorem~\ref{mainT} is  easier if $K^\ce_g > 0$ since then by Gromoll and Meyer \cite{GroMe} $M$ is diffeomorphic 
to $\R^n$. In the general case, we need  additional tools; for instance, we prove the following result, which  extends a theorem by Noronha \cite{N} for 
manifolds with $\Rm_g \geq 0$.
\begin{mainthm} \lb{splitting}
Let $(M^n, g)$ be an open, simply connected Riemannian manifold with 
nonnegative complex sectional curvature. Then $M$ splits isometrically as  $\Sigma \times F$, where $\Sigma$ is the $k$-dimensional soul of $M$ and $F$ is diffeomorphic to $\re^{n - k}$. 
\end{mainthm}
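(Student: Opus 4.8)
The plan is to combine the Cheeger--Gromoll soul theorem with Perelman's resolution of the soul conjecture, and then to extract the splitting from the complexified curvature hypothesis. Since $K^\ce_g \ge 0$ forces $\se_g \ge 0$, the soul theorem provides a compact, totally geodesic, totally convex soul $\Sigma^k \subset M$ such that $M$ is diffeomorphic to the total space of the normal bundle $\nu(\Sigma) \to \Sigma$ and the inclusion $\Sigma \hookrightarrow M$ is a homotopy equivalence; in particular $\Sigma$ is simply connected. By Perelman's theorem the metric (Sharafutdinov) retraction $P\colon M \to \Sigma$ is a $C^\infty$ Riemannian submersion, and each geodesic $\gamma_v(t)=\exp_p(tv)$ with $p\in\Sigma$ and $v\in\nu_p\Sigma$ is a ray of $M$ lying in the fibre $P^{-1}(p)$.

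The heart of the argument is to show that $P$ is locally a metric product, i.e.\ that the fibres are totally geodesic and flat, that the horizontal distribution is integrable with vanishing O'Neill tensor $A$, and that the normal holonomy of $\Sigma$ is trivial. This is the step that genuinely uses $K^\ce_g \ge 0$ rather than only $\se_g \ge 0$, since nonnegatively curved metrics on vector bundles need not split. Fix $p\in\Sigma$ and a unit $v\in\nu_p\Sigma$. Because $\Sigma$ is totally convex and $\gamma_v$ is a ray, $\Sigma$ has no focal point along $\gamma_v$, and a second variation argument forces $R(\dot\gamma_v,E,\dot\gamma_v,E)\equiv 0$ along $\gamma_v$ for every parallel field $E$ with $E(0)\in T_p\Sigma$. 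Applying $K^\ce_g\ge 0$ to the complex $2$-planes spanned by $\dot\gamma_v$ and $E+\sqrt{-1}\,W$, with $W$ a parallel field along $\gamma_v$, and using the first Bianchi identity, one upgrades this to the vanishing of all the mixed curvature components $R(\dot\gamma_v,E,\cdot,\cdot)$ along $\gamma_v$. Read infinitesimally along the family of normal rays through $\Sigma$, these identities say exactly that the second fundamental form of the fibres and the tensor $A$ vanish, and, fed back into the normal connection, that $\nu(\Sigma)$ is flat; since $\Sigma$ is simply connected, its normal holonomy is then trivial.

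Granting this, a de Rham type holonomy argument on the simply connected manifold $M$ yields a global isometric splitting $M = \Sigma \times F$, in which $\Sigma$ is the zero section, hence the soul, and $F$ is a fibre $P^{-1}(p)$. As $\nu(\Sigma)$ is now a trivial bundle, $F$ is diffeomorphic to the bundle fibre $\re^{n-k}$, which completes the proof.

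I expect the second step to be the main obstacle: one must choose the complex $2$-planes adapted to the Jacobi fields along the normal rays so that the minimality of $\Sigma$ kills the diagonal curvature terms, and then propagate that vanishing to the off-diagonal terms that actually control the submersion geometry. Assembling the focal-point analysis carried out separately along all the normal rays into one clean statement about the bundle $\nu(\Sigma)$ — and verifying that the normal holonomy really is trivial — is the delicate bookkeeping, and it is exactly the place where one must go beyond what $\se_g\ge 0$ alone provides.
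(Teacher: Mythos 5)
Your overall strategy --- reduce to showing that the normal bundle of the soul is flat (hence, by simple connectivity of $\Sigma$, has trivial holonomy) and then split --- is the right one, and that reduction is exactly what the paper does via Strake's theorem. But the step you yourself flag as the main obstacle, the ``upgrade'' from $K(\dot\gamma_v,E)=0$ to $R(\dot\gamma_v,E,\cdot,\cdot)=0$, does not work with the complex planes you propose, and this is a genuine gap rather than bookkeeping. A complex plane of the form $\sigma=\mathrm{span}_{\C}\{\dot\gamma_v,\,E+\sqrt{-1}\,W\}$ contains a real vector, and a direct computation (using the symmetry of the bilinear form $(X,Y)\mapsto \Rm(X,u,u,Y)$) shows that its complex sectional curvature equals $\Rm(\dot\gamma_v,E,E,\dot\gamma_v)+\Rm(\dot\gamma_v,W,W,\dot\gamma_v)$: the imaginary cross terms cancel identically. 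So $K^\C\ge 0$ on such planes carries no information beyond real nonnegative sectional curvature, and nonnegative sectional curvature alone is well known not to force the off-diagonal vanishing --- this is precisely why nonnegatively curved vector bundles need not split. To extract anything new one must use genuinely complex (in fact isotropic) planes mixing two soul-tangent and two normal directions.

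That is what the paper's proof does. It fixes $e_1,e_2\in T_p\Sigma$ and $v_1,v_2\in\nu_p\Sigma$, restricts $\Rm$ to $\Lambda^2N$ for $N=\mathrm{span}\{e_1,e_2,v_1,v_2\}$, and writes it in blocks with respect to $\Lambda^2N=\Lambda^2_+\oplus\Lambda^2_-$. Cheeger--Gromoll's vanishing of the four mixed curvatures $K(e_i,v_j)=0$ gives $a_{22}+a_{33}+c_{22}+c_{33}=0$; nonnegative isotropic curvature (a consequence of $K^\C\ge0$, applied to isotropic planes such as $\mathrm{span}\{e_1+iv_1,\,e_2\pm iv_2\}$) gives $a_{22}+a_{33}\ge0$ and $c_{22}+c_{33}\ge0$, hence both vanish; Thorpe's trick then forces $\lambda=0$, so $\Rm|_{\Lambda^2N}$ is a nonnegative operator and the null planes $e_i\wedge v_j$ lie in its kernel; the first Bianchi identity finally yields $\langle \Rm(e_1,e_2)v_1,v_2\rangle=0$. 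Since $\Sigma$ is totally geodesic this is the curvature of the normal connection, so the normal holonomy is trivial and Strake's theorem gives the isometric splitting. Two further remarks on your outline: the fibre $F$ of the splitting carries a complete nonnegatively curved metric on $\re^{n-k}$ but need not be flat, so ``totally geodesic and flat fibres'' is not the right target; and Perelman's soul-conjecture theorem is not needed --- the curvature identity at points of the soul together with Strake's theorem already suffices.
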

\noindent In the nonsimply connected case $M$ is
diffeomorphic to a flat Euclidean vector bundle over the soul.
Thus combining with the classification in \cite{BrSch2} of compact manifolds 
with $K^\ce \geq 0$,
 we deduce that any open manifold of $K^\ce \geq 0$ admits a complete nonnegatively curved 
locally symmetric metric $\hat g$, i.e. $K_{\hat g}\ge 0$, $\nabla R_{\hat g}\equiv 0$.

It is not hard to see that, given any open manifold $(M,g)$ with
 bounded curvature and $K_g^\ce > 0$, for any closed
discrete countable subset $S\subset M$ one can find
a deformation $\bg $ of $g$
 in an arbitrary small neighborhood 
$U$ of $S$ such that $\bg$ and $g$ are $C^1$-close, 
 $(M,\bar g)$ has  unbounded curvature and $K^\ce_{\bg} > 0$.
The following result, which is very much in spirit of Simon
\cite{Simon}, shows that this sort of local deformations 
will be smoothed out instantaneously by our  Ricci flow.
\begin{maincor}\label{cor: noncollapsed} Let $(M^n,g)$ be an open manifold with $K_g^\ce \geq 0$.
If 
\bec \lb{non_col_hyp}
 \inf\bigl\{\vle_g(B_g(p,1)): p\in M\bigr\}=v_0>0,
\eec
then the curvature of $(M,g(t))$ is bounded above by $\tfrac{c(n,v_0)}{t}$ 
for $t\in (0,\mc T(n,v_0)]$.
\end{maincor}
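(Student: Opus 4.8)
The plan is to derive the estimate from Theorem~\ref{mainT} by a blow-up (point-picking) argument: the hypothesis \eqref{non_col_hyp} will be used to force any rescaled limit to be a $\kappa$-noncollapsed ancient solution with \emph{positive} asymptotic volume ratio, which is impossible. Before blowing up I would make two reductions. First, $K^\ce_g\ge 0$ implies $\Rm_g\ge 0$, hence $\ric_g\ge 0$, so by Bishop--Gromov the hypothesis \eqref{non_col_hyp} improves to $\vle_g(B_g(p,r))\ge v_0\,r^n$ for all $p\in M$ and $0<r\le 1$. Second, $K^\ce\ge 0$ bounds the whole curvature tensor by the scalar curvature, $|\Rm|\le c(n)\,\scal$, so it suffices to prove $\scal_{g(t)}\le c(n,v_0)/t$ on $(0,\mc T]$.

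Next I would establish a uniform non-collapsing along the flow: since $\partial_t g=-2\ric\le 0$ and the condition $K^\ce\ge 0$ is preserved, a Perelman no-local-collapsing argument produces $\kappa=\kappa(n,v_0)>0$ such that $(M,g(t))$ is $\kappa$-noncollapsed on all scales $\le 1$ for every $t\in(0,\mc T]$. Because the initial metric may have unbounded curvature, this cannot be quoted on $(M,g)$ directly; instead it should be run on the closed doubled solutions on the convex sets $C_\ell$ underlying Theorem~\ref{mainT} (whose doubling locus is totally geodesic, so that Bishop--Gromov and the no-local-collapsing argument apply), and the bound then transferred to the limit in $\ell$.

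Now suppose the estimate fails. Then there are points $(x_i,t_i)$ with $t_i\,\scal_{g(t_i)}(x_i)\to\infty$, and the standard point-selection lemma lets us assume $Q_i:=\scal_{g(t_i)}(x_i)\to\infty$, $t_iQ_i\to\infty$, and $|\Rm|\le 4Q_i$ on a parabolic neighborhood of $(x_i,t_i)$ of rescaled size tending to infinity. Rescaling by $Q_i$ and recentering, the $\kappa$-noncollapsing together with this local curvature bound give, via Hamilton's compactness theorem, a complete ancient limit Ricci flow $(M_\infty,g_\infty(t))_{t\le 0}$ with $K^\ce_{g_\infty}\ge 0$, $|\Rm_{g_\infty}|\le 4$, $\scal_{g_\infty}(x_\infty,0)=1$, and $\kappa$-noncollapsed at all scales. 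Crucially, \eqref{non_col_hyp} is a non-collapsing at the \emph{fixed} scale $1$: after dividing distances by $\sqrt{Q_i}\to\infty$, Bishop--Gromov (nonnegative Ricci) forces $\vle(B(x_i,R))\ge c(n,v_0)R^n$ for each fixed $R$ and all large $i$, so $g_\infty(0)$ has positive asymptotic volume ratio, $\mathrm{AVR}(g_\infty(0))\ge c(n,v_0)>0$.

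Finally, $(M_\infty,g_\infty(t))$ is a non-flat, $\kappa$-noncollapsed ancient solution with $K^\ce\ge 0$; by Perelman's reduced-volume monotonicity argument (which applies in this generality once the Harnack inequality of \cite{HamHar,BreHar} is available under $K^\ce\ge 0$) such a solution has \emph{vanishing} asymptotic volume ratio, contradicting the previous line. This contradiction yields $\scal_{g(t)}\le c(n,v_0)/t$, hence $|\Rm_{g(t)}|\le c(n,v_0)/t$, on $(0,\mc T(n,v_0)]$. I expect the main obstacle to be the non-collapsing step: because the initial curvature may be unbounded, the whole argument has to be carried out on the closed doubled approximations and the estimate transferred to the limit, rather than applied to $(M,g)$ itself; a secondary point is verifying that the $\mathrm{AVR}=0$ conclusion for $\kappa$-solutions survives under $K^\ce\ge 0$ and not merely under nonnegative curvature operator. (Alternatively, the same $c(n,v_0)/t$ bound may come out as a uniform-in-$\ell$ estimate already established in the proof of Theorem~\ref{mainT}.)
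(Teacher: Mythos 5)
Your blow-up scheme (point-picking, parabolic rescaling, Hamilton compactness, and a contradiction with the vanishing of the asymptotic volume ratio for ancient solutions) is exactly the paper's route: the contradiction argument is the one in Proposition~\ref{Int_BC}, run on the closed approximating flows from Theorem~\ref{mainT}, and your worry about whether the $\mathrm{AVR}=0$ conclusion survives under $K^\ce\ge 0$ rather than $\Rm\ge 0$ is precisely what Lemma~\ref{AVR0} settles (via Brendle's Harnack inequality and the Brendle--Schoen strong maximum principle; note also that the $\kappa$-noncollapsing hypothesis in Perelman's 11.4 was already removed by Ni, so no noncollapsing of the ancient limit is needed for that step). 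Two smaller remarks: $K^\ce_g\ge 0$ does \emph{not} imply $\Rm_g\ge 0$ (the implication goes the other way); you only need $\ric_g\ge 0$ for Bishop--Gromov, and that does follow from $K^\ce_g\ge 0$ via $K_g\ge 0$.

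The genuine gap is in the step you yourself flag as the main obstacle: propagating \eqref{non_col_hyp} forward in time with a constant depending only on $(n,v_0)$. Perelman's no-local-collapsing theorem is not the right tool here. On the closed doublings its constant depends on the initial metric (e.g.\ through a lower bound for the $\mu$-functional or through diameter and volume bounds), and these degenerate as the convex sets exhaust $M$, so you do not get a $\kappa=\kappa(n,v_0)$ uniform in the approximation; and there is no version of the theorem for complete noncompact initial data of unbounded curvature. What the paper uses instead is Petrunin's estimate (Theorem~\ref{Pet}): $\int_{B_{g}(p,1)}\scal_{g}\,d\mu_{g}\le C_n$ whenever $K_{g}\ge 0$. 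Combined with $\partial_t\,\vle_{g(t)}(B)=-\int_B\scal_{g(t)}\,d\mu_{g(t)}$ and the monotonicity $B_{g(0)}(p,1)\subset B_{g(t)}(p,1)$, this shows that the volume of every unit ball decreases at most linearly, at rate $C_n$; this simultaneously yields the lifespan $\mc T=v_0/(2C_n)$ (hence the claimed $\mc T(n,v_0)$, which your appeal to the $\mc T$ of Theorem~\ref{mainT} does not produce, since that constant depends on $g$) and the persistence of the unit-scale volume bound $\vle_{g(t)}(B_{g(t)}(p,r))\ge \tfrac{v_0}{2}r^n$ for $r\in(0,1]$ and $t\in[0,\mc T]$, which is the scale-invariant input your rescaling argument needs. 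With Petrunin's theorem substituted for the no-local-collapsing step, the rest of your argument goes through as written.
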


\noindent In the case of a  nonnegatively curved surface, this volume condition is always satisfied (see \cite{CK}), so any such surface can be deformed by \eqref{RF} to one with bounded curvature. 
For $n\ge 3$ the lower volume bound 
in Corollary~\ref{cor: noncollapsed} is essential:

\begin{mainthm}\label{thm: exa}
a) There is an immortal $3$-dimensional nonnegatively 
curved complete  Ricci flow 
$(M,g(t))_{t\in[0,\infty)}$ with unbounded curvature 
for each $t$. 

b) There is  an immortal  $4$-dimensional  complete Ricci flow 
$(M,g(t))_{t\in[0,\infty)}$ with positive curvature operator 
such that the curvature of $(M,g(t))$ 
is bounded if and only if $t\in [0,1)$.                           
\end{mainthm}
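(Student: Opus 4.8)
The plan is to construct the examples explicitly as warped-product or doubly-warped-product metrics and to verify the claimed curvature properties by direct computation, then apply Theorem~\ref{mainT} to obtain the flow and control its longevity and curvature via the analysis of the limit construction. For part (a), I would look for a rotationally symmetric metric on $M = \R^3$ of the form $g = dr^2 + \varphi(r)^2 g_{\Sph^2}$ whose profile $\varphi$ grows slowly enough at infinity (linear growth, so that $M$ has Euclidean volume growth of order $r^2$ rather than $r^3$ — in particular it \emph{violates} the noncollapsing hypothesis \eqref{non_col_hyp}) but whose curvature is unbounded: concretely one arranges $\varphi'' < 0$ with $-\varphi''/\varphi$ unbounded near infinity, so that the sectional curvature of the planes containing $\partial_r$ blows up while the curvature of the spherical planes stays $\leq 0$ in that combination, giving $K^\ce_g \geq 0$ after the right sign bookkeeping (one checks the complex sectional curvature via the standard formula for rotationally symmetric metrics). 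Running \eqref{RF} from such a $g$, Theorem~\ref{mainT} gives a solution for a short time; to see it is \emph{immortal} and has unbounded curvature for all $t$, I would show the flow preserves rotational symmetry and the collapsed (linear) asymptotic geometry — the ends look asymptotically like a flat cylinder $\Sph^2(c)\times\R$ at infinity where the flow does essentially nothing — while near any bounded region the flow is smooth; a soul-type or maximum-principle argument along the lines of the long-time analysis in the paper shows no finite-time singularity can form and curvature cannot become bounded because the asymptotic collapsed structure persists.

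For part (b) the natural candidate is a doubly warped metric on $M = \R^4$, e.g. $g = dr^2 + \varphi(r)^2\,\sigma^2 + \psi(r)^2 g_{\Sph^2}$ (a cohomogeneity-one metric with $\Sph^3$ principal orbits, or on $\R^2\times\Sph^2$-type topology), with $\varphi,\psi$ chosen so that $(M,g)$ has \emph{strictly positive curvature operator} but with curvature decaying to zero at infinity at a borderline rate — tuned precisely so that the flow, which on such an asymptotically-flat-but-noncompact manifold behaves like Simon's / our construction, concentrates enough curvature to stay smooth on $[0,1)$ and develops unbounded curvature exactly at $t=1$. The mechanism is scale-invariance: I would engineer the initial data so that the curvature at distance $\sim \lambda$ from the soul is $\sim \lambda^{-2}$ up to a slowly varying factor, so that the rescalings which would be forced to form a singularity at $t=1$ are pushed out to spatial infinity; the "if" direction (unbounded for $t\ge 1$) comes from a blow-up / Harnack argument showing that past $t=1$ the only way to continue is with curvature escaping to infinity, and the "only if" direction (bounded on $[0,1)$) is a gradient-estimate argument (in the style of Shi/Simon, combined with the a priori bounds obtained in the proof of the main theorem) showing the flow stays smooth with bounded curvature up to but not including $t=1$.

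I would organize the argument so that each example is reduced to: (i) writing the metric and verifying $K^\ce_g\ge 0$ (resp.\ $\Rm_g>0$) — a finite ODE-coefficient computation using the explicit curvature formulas for these warped products; (ii) identifying the asymptotic model at infinity and checking it is a (static or self-similarly evolving) Ricci flow solution, so that the evolution at infinity is understood; (iii) invoking Theorem~\ref{mainT} for existence and then upgrading to the claimed time interval using the uniform estimates from its proof together with a pseudolocality/maximum-principle argument to localize where curvature can blow up; and (iv) ruling out early blow-up (for (b), before $t=1$) via interior derivative estimates and ruling out late regularization via a Harnack or monotonicity obstruction.

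The main obstacle I anticipate is step (iii)–(iv) for part (b): pinning the blow-up time to be \emph{exactly} $1$, and simultaneously forbidding both premature singularity formation and eventual curvature boundedness, requires delicate fine-tuning of the warping functions and a sharp two-sided control on how fast curvature can grow under the flow on a complete noncompact manifold without upper curvature bounds — precisely the regime where standard theory is unavailable and one must lean entirely on the quantitative estimates developed for Theorem~\ref{mainT} and Corollary~\ref{cor: noncollapsed}. Verifying positivity of the curvature operator for the $4$-dimensional warped metric (rather than merely $K^\ce\ge 0$) along the whole flow, and not just at $t=0$, is the other genuinely technical point, handled by Hamilton's maximum principle for the curvature operator together with the cohomogeneity-one symmetry reducing it to an ODE system.
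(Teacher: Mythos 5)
Your construction for part (a) cannot work as stated, and the obstruction is a result you yourself invoke elsewhere. A complete rotationally symmetric metric $dr^2+\varphi(r)^2g_{\Sph^2}$ on $\R^3$ with $K\ge 0$ forces $\varphi$ to be concave, positive and nondecreasing, hence bounded below by $\varphi(1)>0$ outside the unit ball; a direct computation (or Bishop--Gromov) then shows that \emph{every} such manifold satisfies the noncollapsing hypothesis \eqref{non_col_hyp}. By Corollary~\ref{cor: noncollapsed} the flow therefore has curvature $\le c/t$ for all $t>0$, i.e.\ your spikes in $-\varphi''/\varphi$ are smoothed out instantaneously. You have conflated sub-Euclidean \emph{volume growth} (which governs the lifespan, Corollary~\ref{thm: immortal}) with \emph{local volume collapse} (which is what must fail for curvature to stay unbounded); note also that a linearly growing $\varphi$ gives volume growth $r^3$, not $r^2$. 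The paper's example is built precisely to be collapsed: it is a convex hypersurface $H\subset\R^4$ with only $O(2)\times\Z_2$ symmetry, obtained by stacking convex hulls of embedded ``double cigars'' $S_i$ whose tip scale is $1/i$, so that unit balls near the $i$-th neck have volume $\to 0$ while the total volume growth is still superlinear (whence immortality via Corollary~\ref{thm: immortal}). Unboundedness of curvature for \emph{all} $t$ is then not a soft persistence argument but a quantitative stability statement (Lemma~\ref{lem: stable double cigar}): regions isometric to a piece of $S_i\times\R$ remain close to the product flow of a double cigar --- essentially a steady soliton of curvature $\sim i^2$ at the tip --- for time $i$.

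For part (b), your plan to fine-tune a doubly warped metric so that blow-up occurs \emph{exactly} at $t=1$ attacks a problem the paper sidesteps entirely. The paper's $4$-manifold (Lemma~\ref{lem: 4 exa}) is noncollapsed with nonnegative curvature operator and contains pointed regions converging to $\Sph^2(1)\times\R^2$ and others converging to flat $\R^4$. The $\R^4$ regions give immortality; the $\Sph^2\times\R^2$ regions force unbounded curvature by $t=1$, since a bounded-curvature flow at $t=1$ would, via the trace Harnack inequality and Hamilton's compactness, produce a bounded-curvature limit flow on $\Sph^2(1)\times\R^2$ surviving past its extinction time $1/2$ --- a contradiction. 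Lemma~\ref{lem: bounded} gives an initial interval $(0,\eps]$ of bounded curvature, Harnack shows unboundedness persists after the first unbounded time $t_0\in(\eps,1]$, and then one simply reparametrizes time affinely so that the transition lands at $t=1$. No fine-tuning and no sharp two-sided blow-up rate is needed; the ``main obstacle'' you identify dissolves once you allow yourself this rescaling. Your appeal to the maximum principle for positivity of the curvature operator along the flow is fine, but the harder missing ingredients are the compactness/stability lemmas that transfer the known behaviour of the model flows ($\Sph^2\times\R^2$, the cigar) to the constructed manifold.
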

\noindent Higher dimensional examples can be obtained by crossing 
with a Euclidean factor. 
Part b) shows that even if the initial metric has bounded curvature 
one can run into metrics with unbounded curvature.
The following result gives a precise lower bound on the existence time for \eqref{RF} in terms of supremum of the volume of balls, instead of infimum as in Corollary \ref{cor: noncollapsed} and \cite{Simon}. 
We emphasize that this is new even in the case 
of initial metrics of bounded curvature.
\begin{maincor}\label{thm: immortal}
In each dimension there is a universal  constant $\eps(n) > 0$ 
such that for each complete manifold
 $(M^n,g)$ with $K^\ce_g \geq 0$
the following holds: If  we put 
\[
\mc T:=\eps(n)\cdot \sup\Bigl\{\tfrac{\vle_g(B_g(p, r))}{r^{n-2}}\bigm|  p\in M, r>0\Bigr\}\in (0,\infty],
\]
then any complete maximal solution of   
Ricci flow $(M,g(t))_{t\in [0,T)}$  with $K^\ce_{g(t)} \geq 0$ and $g(0)=g$
 satisfies $\mc T\le T$. 
\end{maincor}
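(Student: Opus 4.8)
The plan is to obtain the lower bound $\mc T \le T$ by a continuity/bootstrap argument that estimates from below the existence time of the Ricci flow in terms of the volume ratio $\nu(g):=\sup\{\vle_g(B_g(p,r))/r^{n-2}: p\in M,\ r>0\}$. Since the statement asserts this for \emph{every} maximal solution with $K^\ce_{g(t)}\ge 0$ and not merely for a distinguished one, the heart of the matter is an a priori estimate: as long as the flow exists and keeps nonnegative complex sectional curvature, the quantity $\nu(g(t))$ cannot collapse to zero before time $\eps(n)\,\nu(g)$. The first step is therefore to establish the monotonicity-type inequality $\nu(g(t))\ge \nu(g) - c(n)\,t$ (or a comparable quantitative control), so that $\nu$ stays positive on $[0,\eps(n)\nu(g))$; this would follow from a point-picking argument combined with the fact that under Ricci flow with $K^\ce\ge 0$ (hence $\Ric\ge 0$) the metric is nonincreasing, so balls only shrink, together with a local volume lower bound in the spirit of Perelman's no-local-collapsing adapted to this curvature condition.

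Next I would run the standard dichotomy for maximal solutions: if $T<\eps(n)\,\nu(g)$, then the flow must develop a singularity at $T$, and by the curvature-estimate machinery available for $K^\ce\ge 0$ flows — notably the pseudolocality-type estimates and the Harnack inequality of Brendle \cite{BreHar} quoted after Theorem~\ref{mainT}, which via Hamilton's argument yields $|\Rm_{g(t)}|\le C/t$ type control from a volume lower bound — one would extract a blow-up limit that is an ancient, nonflat, nonnegatively complex-sectionally-curved solution. The contradiction is then obtained by showing such a blow-up limit would force $\nu$ of the rescaled metrics to tend to $0$, contradicting the a priori lower bound on $\nu(g(t))$ carried through the rescaling (the exponent $n-2$ in the volume ratio is exactly the scaling-invariant one making $\nu$ behave correctly under parabolic rescaling in a way that detects the formation of a cylindrical or lower-dimensional singularity model). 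Here one uses Theorem~\ref{splitting} and its nonsimply connected counterpart, together with the Cheeger--Gromoll soul structure, to understand the geometry at infinity of the limit and rule it out.

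The main obstacle, I expect, is the first step: proving the clean lower bound $\nu(g(t))\ge \nu(g)-c(n)t$ with the \emph{correct} exponent and an explicit dimensional constant, uniformly over all scales $r>0$ simultaneously. Controlling $\vle_{g(t)}(B_{g(t)}(p,r))/r^{n-2}$ requires reconciling two competing effects — the metric contracts under $\Ric\ge 0$ (which shrinks the ball, bad for the numerator) while the radius $r$ in the denominator should be measured in an appropriate (initial or time-$t$) metric — and making the bookkeeping scale-invariant so that a single constant $\eps(n)$ works. I would handle this by a maximum-principle argument on a suitably defined localized volume-ratio functional, or alternatively by a contradiction/compactness argument: if the conclusion failed one would get a sequence of flows whose rescalings converge to a limit violating the monotonicity, and then invoke the classification tools above. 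A secondary difficulty is ensuring the blow-up limit actually exists as a smooth complete limit; this needs the injectivity-radius/noncollapsing input, which is precisely where the volume ratio staying positive feeds back into the argument, so the logical structure is a simultaneous induction on the time interval.
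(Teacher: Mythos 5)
Your overall architecture (a quantitative lower bound on the volume ratio that decays at most linearly in $t$, plus a blow-up/extension argument at a putative early singular time) matches the paper's, but there is a genuine gap at exactly the point you yourself flag as the main obstacle: you never identify the mechanism that actually produces the estimate $\vle_{g(t)}\ge \vle_{g(0)}-C_n\,t\,r^{n-2}$. In the paper this comes from Petrunin's theorem (Theorem~\ref{Pet}, in its scale-invariant form Lemma~\ref{Pet_si}): for $K_g\ge 0$ one has $\int_{B_g(p,r)}\scal_g\,d\mu_g\le C_n r^{n-2}$, so $\tfrac{\partial}{\partial t}\vle_{g(t)}\bigl(B_{g(0)}(p,r)\bigr)=-\int\scal\ge -C_nr^{n-2}$, while $\Ric\ge 0$ gives $B_{g(0)}(p,r)\subset B_{g(t)}(p,r)$ (note: the shrinking of distances helps the numerator here, it does not hurt it). The substitutes you propose --- Perelman's no-local-collapsing, or a maximum principle for a localized volume-ratio functional --- do not yield a linear-in-$t$ volume loss with the exponent $n-2$ and a dimensional constant; no-local-collapsing gives noncollapsing relative to curvature bounds, not a rate of volume decrease. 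Without Petrunin's integral bound the first step, and hence the value of $\eps(n)=1/C_n$, has no proof.

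The second half is also structured differently from the paper and is incomplete as stated. The paper does not analyze the geometry at infinity of a blow-up limit via Theorem~\ref{splitting} and the soul; instead it proves Lemma~\ref{vol collapse}: if $T<\infty$ then $\sup_{p,r}\vle_{g(t)}(B_{g(t)}(p,r))/r^{n-2}\to 0$ as $t\to T$, because a persistent volume ratio bound would (via the point-picking of Proposition~\ref{Int_BC} and Lemma~\ref{AVR0}) give uniform curvature bounds near $T$, smooth convergence to a complete metric $g(T)$ (completeness from Lemma~\ref{lem: complete2}), and then an extension of the flow by Theorem~\ref{mainT} --- contradicting maximality. This last step matters: for a complete noncompact maximal solution with possibly unbounded curvature, ``maximal'' does not automatically mean ``curvature blows up,'' and one needs the short-time existence theorem for unbounded-curvature initial data to restart the flow. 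The corollary then follows immediately: if $T<\mc T$, pick $p,r$ with $\vle_{g(0)}(B_{g(0)}(p,r))>C_nTr^{n-2}$; Petrunin's estimate keeps $\vle_{g(t)}(B_{g(t)}(p,r))$ bounded below by a fixed positive amount up to time $T$, contradicting Lemma~\ref{vol collapse}. You should restructure your argument around these two ingredients.
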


\noindent If $M$ has a volume growth larger than $r^{n-2}$, 
then Corollary \ref{thm: immortal} ensures the existence of an immortal solution. Previously (cf. \cite{SS}) long time existence was only known in the case of Euclidean volume growth under the stronger assumptions $\Rm_g \geq 0$ and bounded curvature. We highlight that our volume growth condition cannot be further improved: indeed, as the Ricci flow on the metric product $\Sph^2\times \R^{n-2}$
exists only for a finite time, 
the power $n-2$  is optimal. For $n = 3$ we can even determine 
exactly the extinction time depending on the structure of the manifold:
\begin{maincor}\label{thm: immortal3}
Let $(M^3, g)$ be an open manifold with $K_g \geq 0$ and soul $\Sigma$. If  $(M,g(t))_{t\in [0,T)}$  is a maximal complete solution of \eqref{RF} 
 with $g(0) = g$ and $K^\C_{g(t)}\ge 0$, then
\[
T= \left\{\ba{ll} \tfrac{\mathrm{area}(\Sigma)}{4\pi\chi(\Sigma)} & \text{ if } \quad \dim \Sigma = 2 \\
\infty & \text{ if } \quad \dim \Sigma = 1 \\
 \frac1{8 \pi} \lim_{r\to \infty} \frac{\vle_{g}\(B_{g}(p, r)\)}{r} & \text{ if } \quad  \Sigma = \{p_0\} \ea \right..
\]
In the case $\Sigma = \{p_0\}$, if  $T < \infty$, then $(M,g)$ is asymptotically cylindrical and $(M,g(t))$ has  bounded curvature for $t>0$.
\end{maincor}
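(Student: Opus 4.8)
The plan is to treat the three cases for $\dim\Sigma$ separately, each time using Theorem~\ref{splitting} (or its nonsimply connected version) to reduce to a product situation and then identify the extinction time with the extinction time of a model flow.

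\smallskip

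\emph{Case $\dim\Sigma=2$.} By the structure theory quoted after Theorem~\ref{splitting}, passing to the universal cover $\tM$ we get an isometric splitting $\tM=\Sigma\times F$ with $F$ diffeomorphic to $\R^{3-2}=\R$, so $F$ is isometric to $\R$ or to a circle; in either case the flow on $M$ is, up to the flat line/circle factor, the flow of a closed (or open) nonnegatively curved surface, namely $\Sigma$. For a closed surface Hamilton's theory gives that the normalized flow converges and the unnormalized flow becomes extinct at time $\mathrm{area}(\Sigma)/(4\pi\chi(\Sigma))$ (this is just the evolution $\partial_t\mathrm{area}=-\int R=-4\pi\chi$ by Gauss-Bonnet). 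The flat $\R$ (or $S^1$) factor contributes nothing to the Ricci tensor, so $T$ is exactly this value. One must check that $\chi(\Sigma)>0$, which holds since $\Sigma$ carries a nonnegatively curved metric and is the soul of a $3$-manifold that is not flat in the normal directions; if $\chi(\Sigma)=0$ then $\Sigma$ is a flat torus or Klein bottle and $M$ is flat, so $T=\infty$, consistent with reading the first line as vacuous in that subcase.

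\smallskip

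\emph{Case $\dim\Sigma=1$.} Then $\Sigma$ is a circle (or $\R$), and the universal cover splits as $\R\times F$ with $F$ diffeomorphic to $\R^2$ and nonnegatively curved; more precisely $M$ is a flat vector bundle over $S^1$, so after a finite cover it is $S^1\times\R^2$ with a nonnegatively curved metric on the $\R^2$ factor. The flow reduces to the flow on a complete nonnegatively curved surface $F\cong\R^2$. By the surface results (Chow-Knopf, and the fact that complete nonnegatively curved surfaces of this type have subquadratic-or-linear area growth with nonpositive "defect") such a flow is immortal; hence $T=\infty$. The key input is that an open nonnegatively curved surface diffeomorphic to $\R^2$ never becomes extinct under Ricci flow — this is classical $2$d Ricci flow theory and also follows from Corollary~\ref{thm: immortal} once one notes $n-2=0$ so the volume-growth quantity is $+\infty$.

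\smallskip

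\emph{Case $\Sigma=\{p_0\}$.} Here $M$ is diffeomorphic to $\R^3$ (by Theorem~\ref{splitting} and the point soul), and the relevant scale-invariant quantity is $\lim_{r\to\infty}\vle_g(B_g(p,r))/r$, which exists by monotonicity (Bishop-Gromov together with $K_g\ge0$ forces the ratio $\vle_g(B_g(p,r))/r^{3-2}$ to be monotone nonincreasing in $r$, so the limit exists in $[0,\infty)$; it is $>0$ iff the volume growth is at least linear). The plan is to show $T=\tfrac{1}{8\pi}\lim_{r\to\infty}\vle_g(B_g(p,r))/r$ by a two-sided estimate. The lower bound $T\ge\tfrac{1}{8\pi}\lim\vle_g(B_g(p,r))/r$ is exactly Corollary~\ref{thm: immortal} with $n=3$ (tracking the universal constant $\eps(3)$ — one must verify $\eps(3)=\tfrac{1}{8\pi}$, i.e. the sharp constant, which is the analog of the sharp finite-time statement for $S^2\times\R$). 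For the upper bound, if $T<\infty$ one shows the solution is forced to develop a cylindrical singularity: using the trace Harnack estimate (valid here by Brendle, as noted after Theorem~\ref{mainT}) together with the dimension-reduction / asymptotic soul picture in dimension $3$ with $K\ge0$, the manifold at the singular time must be asymptotic to a round cylinder $S^2\times\R$ whose neck has a definite size controlled by the linear volume growth coefficient, and for the shrinking-cylinder model the extinction time is precisely $\mathrm{area}/(8\pi)=$ (neck radius${}^2$)/$4$, giving $T\le\tfrac{1}{8\pi}\lim\vle_g(B_g(p,r))/r$. The final assertion — that for $T<\infty$ the manifold is asymptotically cylindrical and $g(t)$ has bounded curvature for $t>0$ — comes out of this same analysis: bounded curvature for $t>0$ follows because, after the instantaneous smoothing, a complete nonnegatively curved $3$-manifold asymptotic to a cylinder has bounded geometry propagated by the flow (Shi-type estimates once an upper bound is in place), and the asymptotic-cylinder structure is preserved under the flow.

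\smallskip

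I expect the main obstacle to be the point-soul case, specifically pinning down the \emph{sharp} constant $1/(8\pi)$ on both sides: the lower bound requires that Corollary~\ref{thm: immortal}'s universal $\eps(3)$ be optimal at $1/(8\pi)$ (equality case $S^2\times\R$), and the upper bound requires genuinely showing the singularity is modelled on the standard shrinking cylinder with the \emph{correct} scale, i.e. ruling out faster collapse or a smaller neck, which is where one must combine the Harnack inequality, nonnegativity of curvature, and the precise linear volume growth rather than just a comparison inequality. The other cases are comparatively routine reductions to known $2$-dimensional Ricci flow behaviour via the splitting theorem.
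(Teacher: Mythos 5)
Your cases $\dim\Sigma=2$ and $\dim\Sigma=1$ essentially match the paper. (One caveat on $\dim\Sigma=1$: the paper applies Corollary~\ref{thm: immortal} to the lifted flow on the universal cover $\R\times F$, whose \emph{initial} volume growth is superlinear because the open surface $F$ has at least linear area growth; your reduction to a $2$-dimensional flow on the factor $F$ presupposes that an arbitrary maximal complete solution on $M$ respects the local splitting, which is not automatic without uniqueness.) The point-soul case, however, has genuine gaps. The existence of $\lim_{r\to\infty}\vle_g\(B_g(p,r)\)/r$ does not follow from Bishop--Gromov, which makes $\vle_g\(B_g(p,r)\)/r^{3}$ monotone, not $\vle_g\(B_g(p,r)\)/r$; moreover the limit need not be finite (think of Euclidean volume growth), so your claim that it lies in $[0,\infty)$ is false. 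The paper obtains the limit from the convex exhaustion: $\vle(C_i)=\vle(C_0)+\int_0^i\mathrm{area}(\partial C_t)\,dt$, together with the fact that the Sharafutdinov retraction $\partial C_b\rightarrow\partial C_a$ ($a\le b$) is $1$-Lipschitz, so $\mathrm{area}(\partial C_r)$ is monotone \emph{increasing} with limit $L=\lim_r\vle\(B(p,r)\)/r\in(0,\infty]$.

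More seriously, your route to the sharp constant $1/(8\pi)$ cannot work as proposed. The lower bound is not Corollary~\ref{thm: immortal} with $\eps(3)=1/(8\pi)$: there $\eps(3)=1/C_3$ with $C_3$ Petrunin's non-explicit constant, which is neither claimed nor expected to be sharp, and the corollary involves a supremum rather than a limit. The upper bound in your sketch is only a gesture at ``dimension reduction''. The paper proves the exact identity $8\pi T=L$ (both inequalities at once) by a different mechanism: assuming $T<\infty$, Corollary~\ref{thm: immortal} gives $L<\infty$; a diameter bound $\lim_r\diam(\partial C_r)<\infty$ is then forced (otherwise one cuts $\partial C_r$ by arbitrarily short separating circles and pushes them down by the $1$-Lipschitz retraction to contradict the area convergence); hence $\partial C_r$ converges as a nonnegatively curved Alexandrov space to a cross-section $S$ and $(M,g,p_i)\rightarrow S\times\R$ for $p_i\to\infty$, i.e.\ $(M,g)$ is asymptotically cylindrical with cross-sectional area $L$. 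This gives uniform non-collapsing, hence $|\Rm|_{g(t)}\le C/t$ by Corollary~\ref{cor: noncollapsed}, so one can extract a pointed limit of the \emph{flow} at spatial infinity, which, having two ends, splits as $(\Sph^2,\bar g(t))\times\R$. Lemma~\ref{vol collapse} forces $\vle_{\bar g(t)}(\Sph^2)\to 0$ as $t\to T$, while Gauss--Bonnet gives $\vle_{\bar g(t)}(\Sph^2)=\vle_{\bar g(0)}(\Sph^2)-8\pi t$ with $\vle_{\bar g(0)}(\Sph^2)=L$; therefore $T=L/(8\pi)$. The monotone boundary areas, the diameter control, the splitting of the limit flow, and the Gauss--Bonnet computation are exactly where the constant $8\pi$ comes from, and none of them appear in your proposal.
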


By Corollary~\ref{thm: immortal} 
a finite time singularity $T$ on open manifolds with $K^\C\ge 0$ 
 can only occur if the manifold collapses uniformly as 
$t\to T$. For immortal solutions we 
will also give an analysis of the long time behaviour of the flow: 
In the case of an initial metric with Euclidean volume 
growth we remark that a result of Simon and Schulze \cite{SS} can be adjusted 
to see that a suitable rescaled Ricci flow 
subconverges to an expanding soliton, see Remark~\ref{rem: euclidean}. 
If the initial manifold does not have Euclidean volume growth, 
then by Theorem~\ref{thm: steady} 
any immortal solution can be rescaled suitably 
so that it subconverges to a steady soliton (different from the Euclidean space).

\section{Structure of the paper and strategy of proof}

Section \ref{sec:back} contains the background material that we use repeatedly throughout the paper. The definition of nonnegative complex 
sectional curvature, which implies nonnegative sectional curvature and has the advantage to be invariant under the Ricci flow, can be found in subsection \ref{sec:PCSC}. 
Subsection \ref{subsec: chgr} is about the basics of open nonnegatively curved manifolds.

Section \ref{sec:pos-case} carries out the proof of Theorem \ref{mainT} for the particular case of a mani\-fold $(M, g)$ with $K_g^\ce > 0$, which is an easier scenario since 
there is a {\em smooth} strictly convex proper 
function $\beta: M \fle [0, \infty[$. The idea (developed within the proof of Proposition \ref{IniAp}) is 
to show that the doubling $D(C_i)$ of the compact 
 sublevel  $C_i=\beta^{-1}([0,i])$ 
admits a metric with $K^\ce \geq 0$.
We actually prove that after replacing
$C_i$ by the graph of a convex function
defined on $C_i$ (a reparametrization of $\beta$) the doubling 
is a smooth closed manifold $(M_i, g_i)$ with $K^\ce_{g_i} > 0$. 
The sequence $(M_i,g_i)$ converges to $(M,g)$.  
The key is now to establish two important properties
for the Ricci flows of $(M_i,g_i)$: (1) there is a lower bound (independent of $i$) for the maximal times of existence $T_i$ (Proposition \ref{LBT}), and (2) we can find arbitrarily large balls around the soul point $p_0$ where the curvature has an upper bound of the form $C/t$ (here $C$ depends on the distance to $p_0$, see Proposition \ref{Int_BC}).
The crucial tool for (1) is a result by Petrunin (Theorem \ref{Pet}) which also allows to conclude that the evolved unit balls around the soul are uniformly non-collapsed (Corollary \ref{Cor_vol}). For the proof of (2) we use a fruitful point-picking technique by Perelman \cite{P1},  and we also need to obtain an improved version of  11.4 in \cite{P1} (Lemma
 \ref{AVR0}). All these results ensure that we can perform suitable compactness arguments to prove Theorem \ref{mainT} for the positively curved case (Theorem \ref{mainT_pos}).

Several additional difficulties arise when we 
just assume $K^\ce_g \geq 0$. For instance, the soul is not necessarily a point.
A harder issue is  
that the sublevels of a Busemann function
$C_\ell=b^{-1}((-\infty,\ell])$ have non-smooth boundary.
Thus there is no obvious smoothing of the doubling 
$D(C_\ell)$ with $K^\ce \geq 0$. Section \ref{sec:misc} gathers the technical results we will need to apply in Section \ref{sec:gral} to overcome the extra complications of the general case of Theorem \ref{mainT}: we prove Theorem \ref{splitting}, which essentially reduces the problem to the 
situation where the soul is a point; we establish  two estimates for abstract solutions of a Riccati equation 
which are used later to give a quantitative estimate of the convexity of
 the sublevels $C_{\ell}$  in terms of the curvature (Lemmas \ref{Ric_comp_cons} and \ref{Ric_comp}); in Proposition \ref{curv_est_NN} we get curvature estimates in terms of volume and lower sectional curvature; finally, we include a technical result (Lemma \ref{lem: smoothing}) about how to perform a smoothing process for $C^{1,1}$ hypersurfaces with bounds on the 
principal curvatures in the support sense by $C^\infty$ hypersurfaces where the bounds change  with an arbitrarily small error.

All the auxiliary results from Section \ref{sec:misc}
 are employed in Section \ref{sec:gral} to give a complete proof of Theorem \ref{mainT}. First, we prove upper and lower estimates for the Hessian of $d^2(\cdot, C_\ell)$ (see Proposition \ref{AR32} and Corollary \ref{est_Hesf}), and then we reparametrize such a  distance function to give a sequence of functions  whose graphs $D_{\ell, k}$, after a smoothing process, give $C^\infty$ closed manifolds converging to the double $D(C_\ell)$. The sets $D_{\ell, k}$ are not anymore convex, but we have a precise control on the complex sectional curvatures of the induced metrics $g_{\ell, k}$ (see Proposition \ref{smoothing}). In Proposition \ref{prop:curv_est} we prove that such curvature control survives for some time for the Ricci flows starting on $(D_{\ell, k}, g_{\ell, k})$. As a consequence 
we get, for all large $\ell$,  a solution of the Ricci 
flow on $D(C_\ell)$ with $K^\ce \geq 0$, and
whose \lq initial metric' is the natural singular metric
on the double. The rest of the proof is then 
essentially analogous to Section~\ref{sec:pos-case}.

Corollary~\ref{cor: noncollapsed}, \ref{thm: immortal} and 
\ref{thm: immortal3} are proved in Section~\ref{sec: applications}
 and Theorem \ref{thm: exa} is proved in Section~\ref{sec: immortal}

We end with three appendices containing additional background about open nonnegatively curved manifolds (Appendix \ref{AppA}), results for convex sets in Riemannian manifolds (Appendix \ref{AppB}) and results about smooth convergence and curvature estimates for the Ricci flow (Appendix \ref{AppC}).

\section{Basic background material} \lb{sec:back}

\subsection{About the relevant curvature condition} \lb{sec:PCSC}

We first need to introduce
\bde \lb{complex curv}
Let $(M^n, g)$ be a Riemannian manifold, and consider its complexified tangent bundle $T^\ce M:= TM\otimes \mathbb{C}$.
We extend the curvature tensor $\Rm$ and the metric $g$ at $p$
to $\C$-multilinear maps $\Rm\colon (T^\ce_p M)^4\rightarrow \C$,
$g\colon (T^\ce_p M)^2\rightarrow \C$. 
 The complex sectional curvature of a 2-dimensional 
complex subspace $\sigma$ of $T_p^\ce M$ is defined by
$$K^{\mathbb{C}}(\sigma) =  \Rm(u, v, \bar v, \bar u) = g(\Rm(u \wedge v), \overline{u \wedge v}),$$
where $u$ and $v$ form any unitary basis for $\sigma$, i.e.
$g(u,\bar u)=g(v,\bar v)=1$ and $g(u,\bar v)=0$.
We say $M$ has nonnegative complex sectional curvature if $K^\C\ge 0$. 

The manifold has nonnegative isotropic 
curvature if $K^\C(\sigma)\ge 0$ for any isotropic plane $\sigma\subset T_p^\C M$, 
i.e. $g(v,v)=0$ for all $v\in \sigma$.

\ede

\bnod \lb{RemPCSC}
Here we collect some relevant features known about the above curvature condition (see \cite{BrSch1} and \cite{NiWol} for the proofs).
\begin{enumerate}
\item[(a)] If $g$ has strictly (pointwise) $1/4$-pinched sectional curvature, then $K^{\mathbb C}_g > 0$. 
\item[(b)] Nonnegative curvature operator ($R_g \geq 0$) implies $K^\ce_g \geq 0$, which in turn gives nonnegative sectional curvature ($K_g \geq 0$). For $n\le 3$ the converse holds.
\item[(c)] $K_{(M,g)}^{\mathbb C} \geq 0$ if and only if $(M, g) \times \re^2$ has nonnegative isotropic curvature. 
\item[(d)] The positivity and nonnegativity of $K^{\mathbb C}$ is preserved under the Ricci flow.
\item[(e)] Let $(M, g)$ be closed with $K^\ce_g > 0$. Then $g$ is deformed by the normalized Ricci flow to a metric of positive constant sectional curvature, as time goes to infinity.
\end{enumerate}
\enod

\begin{prop}\lb{hol}
Let $(M^n, g)$ be closed with $K_g^\ce \geq 0$. If $M$ 
is homeomorphic to a sphere, then the Ricci flow $g(t)$ with $g(0) = g$ has $K_{g(t)}^\ce > 0$ for any $t > 0$.
\end{prop}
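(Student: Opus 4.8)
The plan is to exploit the strong maximum principle for systems (Hamilton's version) applied to the curvature operator evolving under the Ricci flow, in the form that is by now standard for such preserved convex cones. Since $M$ is closed, short time existence is classical, so there is a solution $g(t)$ on some interval $[0,\tau)$ with $g(0)=g$; by Remark~\ref{RemPCSC}(d) the condition $K^\ce_{g(t)}\ge 0$ is preserved, so the issue is only to upgrade nonnegativity to strict positivity for $t>0$. The key point is that the set of curvature operators with $K^\ce\ge 0$ is a closed convex $\Or(n)$-invariant cone which is preserved by the ODE associated to the Ricci flow reaction term; hence Hamilton's strong maximum principle applies and tells us that, for $t>0$, the image of the curvature operator (or more precisely the smallest invariant geometric "face" containing it) is constant in $t$ and invariant under parallel transport and under the reaction ODE. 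So I would argue: either $K^\ce_{g(t)}>0$ for all $t>0$, or else for every $t>0$ there is a point and a complex $2$-plane on which $K^\ce$ vanishes, and the zero set of $K^\ce$ defines a parallel, ODE-invariant subbundle (a nontrivial invariant subset of the cone's boundary).

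The second horn must then be excluded using the hypothesis that $M$ is homeomorphic to a sphere. Here the strategy is: a nontrivial invariant set in the boundary of the $K^\ce\ge 0$ cone, stable under parallel transport, forces a reduction of the holonomy of $g(t)$, and by the Berger classification (combined with the fact that $K^\ce\ge 0$ rules out generic holonomy unless $K^\ce\equiv 0$ in the relevant directions) one is pushed either to a flat factor, a symmetric space, a Kähler factor, or a product — all of which are incompatible with $M$ being a homotopy sphere. Concretely I would invoke the classification results of Brendle--Schoen (and Ni--Wolfson) on manifolds with $K^\ce\ge 0$: a closed manifold with $K^\ce\ge 0$ whose curvature operator has a nontrivial kernel of the appropriate type is, up to finite cover, isometrically a product or locally symmetric or Kähler, none of which can be a simply connected homotopy sphere of dimension $\ge 2$ unless the metric already has $K^\ce>0$. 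The homeomorphism-to-a-sphere hypothesis kills all the degenerate alternatives, leaving only $K^\ce_{g(t)}>0$.

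I expect the main obstacle to be the second step: making precise which "invariant subset of the boundary of the cone" the strong maximum principle produces, and then showing it genuinely forces a holonomy reduction or a de Rham / symmetric-space splitting. Hamilton's strong maximum principle is clean for nonnegative curvature operator, where the kernel is literally a parallel subbundle of $\Lambda^2 TM$; for the cone $K^\ce\ge 0$ the "null vectors" sit on the boundary in a more subtle way, so one must identify the correct notion of null-space (following the analysis in Brendle--Schoen of the faces of this cone) and check it is parallel and ODE-invariant. Once that structural statement is in hand, feeding it into the Brendle--Schoen/Ni--Wolfson classification and discarding everything that is not a homotopy sphere is comparatively routine. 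An alternative, perhaps cleaner, route would be to appeal directly to the already-known fact (Remark~\ref{RemPCSC}(e), via Brendle--Schoen's differentiable sphere theorem machinery) that a closed manifold with $K^\ce\ge 0$ either is a locally symmetric space / splits, or else is deformed to a round space form — combined with the observation that a homotopy sphere cannot be in the degenerate case, so that $K^\ce$ must become strictly positive instantly; I would present the strong-maximum-principle argument as the main line and mention this as the conceptual reason it works.
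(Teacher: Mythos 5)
Your proposal is correct and follows essentially the same route as the paper: the paper likewise rules out the degenerate alternatives (Ricci-flat, reducible, K\"ahler, quaternion-K\"ahler, locally symmetric — the last via Borel, giving the round metric) using the sphere hypothesis and Berger's holonomy classification to force $\SO(n)$ holonomy, and then invokes the strong maximum principle argument from the proof of Brendle--Schoen's Proposition 10 to upgrade $K^\ce\ge 0$ to $K^\ce>0$ for $t>0$. The technical step you flag as the main obstacle (identifying the parallel, ODE-invariant null set on the boundary of the cone) is exactly the part the paper delegates to that cited proposition, so your sketch matches the paper's level of detail.
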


\bdem  Clearly $g$ cannot be Ricci flat as this 
would give a flat metric on a sphere.
Moreover, since $M$ is a sphere the metric is 
irreducible and neither K\"ahler nor Quaternion-K\"ahler. 
If $(M,g)$ is a locally symmetric space 
we could use a result of \cite{Bor} 
to see that $(M,g)$ is round. 
Combining all this with the holonomy classification 
of Berger \cite{Berh} we deduce that $g$ as well as $g(t)$ has $\SO(n)$ 
holonomy. Now the statement follows from the proof of \cite[Proposition 10]{BrSch2}.
\edem

\subsection{Cheeger-Gromoll convex exhaustion}\label{subsec: chgr}

Let $(M,g)$ be a nonnegatively curved open manifold. 
A ray is a unit speed geodesic $\gamma\colon [0,\infty)\rightarrow M$ 
such that $\gamma_{[0,s]}$ is a minimal geodesic for all $s>0$.
Fix $o \in M$, and consider the set of rays 
$$\mc R = \{\gamma: [0, \infty) \fle M : \gamma\text{ is a ray with }  \gamma(0) = o\}.$$ 
 Recall that
$$b =\sup_{\gamma \in \mc R}\Big\{ \lim_{s \to \infty} \big(s - d_g(\gamma(s), \cdot)\big)\!\Big\}$$
 is called the Busemann function of $M$. 
By the work of  Cheeger, Gromoll  and Meyer \cite{GroMe, ChGr} $b$
 is a convex function, that is, for any geodesic $c(s)\in M$ the function 
$s\mapsto b\circ c(s)$ is convex. 
Equivalently one can say  that $b$ 
satisfies $\nabla^2b\ge 0$ in the support sense (cf.~Definition \ref{def_spt}).

The following properties of the sublevels
$C_\ell:=b^{-1}((-\infty,\ell])$ 
will be used throughout the paper:
\begin{enumerate}
\item Each $C_\ell$ 
 is a totally convex compact set,
\item ${\rm dim} \, C_\ell = n$ for all $\ell > 0$, $\cup_{\ell > 0} C_\ell = M$,
\item $s < \ell$ implies $C_s \subset C_\ell$ and $C_s = \{x \in C_\ell\, :\, d_g(x, \partial C_\ell) \geq \ell - s\}$,
\item  each $C_\ell$, $\ell >0$, has the structure of an embedded submanifold of $M$ with smooth totally geodesic interior and (possibly non-smooth) boundary.
\end{enumerate}
The family ${C_\ell}$ is part of the
Cheeger-Gromoll convex exhaustion used for the soul construction (see some more details in Appendix \ref{AppA}). 
For us only the structure of $C_\ell$ for $\ell\to \infty $ 
is of importance.
If $(M,g)$ has positive rather than nonnegative sectional curvature,
then $\nabla^2 e^b> 0$ holds in the support sense.
By a local smoothing procedure  one can then show

\bt[Greene-Wu, \cite{GW1}] \lb{GW1t}
If $(M^n, g)$ is an open manifold with $K_g > 0$, then there exists a smooth  proper strictly convex function $\beta\colon M\rightarrow [0,\infty[$.
\et
\noindent The main reason why the proof of Theorem~\ref{mainT} is quite 
a bit easier in the positively curved case is this theorem. 
In the nonnegatively curved case we will have to work 
with the sublevels of the Busemann function instead.

\section{Manifolds with positive complex sectional curvature}\lb{sec:pos-case}
\subsection{Approximating sequence for the initial condition}
Let $(M^n, g)$ be an open manifold with $K^\ce_g > 0$. On $M$ we can consider a function $\beta$ as described in Theorem \ref{GW1t}. 
Since $\beta$ is proper, the global minimum is attained
and we may assume that its value is $0$. 
Since $\beta$ is strictly convex, $\beta^{-1}(0)$ 
consists of a single point $p_0$, and clearly $p_0$ 
is the only critical point of $\beta$. Hence the sublevel set
\bec \lb{defCi}
C_i = \{x \in M\, :\, \beta(x) \leq i\}
\eec
is a convex set with a smooth boundary for all $i>0$. 
Recall that $\beta$ is obtained essentially from a smoothing of a Busemann function 
$b$. Thus we may assume that for each $i$ there is some $\ell_i$ 
so that $C_i$ has Hausdorff distance $\le 1$ to $b^{-1}((-\infty,\ell_i])$. 

The goal is to construct a pointed sequence of closed manifolds converging to $(M, g, p_0)$. The first attempt would be to consider the double $D(C_i)$ of $C_i$ (which is obtained by gluing together two copies of $C_i$ along the identity map of the boundary). However,  $D(C_i)$ is usually not a
 smooth Riemannian manifold. To overcome this, we adapt to our setting ideas from \cite{Kron, Gui} which roughly consist in modifying the metric in a small inner neighborhood of the boundary $\partial C_i$ to form a cylindrical end so that the gluing is well defined.

\begin{prop}\lb{IniAp}
Let $(M^n, g)$ be an open manifold  with $K^\ce_g > 0$ and soul point $p_0$. Then there exists a collection $\{(M_i, g_i, p_0)\}_{i \geq 1}$ of smooth closed $n$-dimensional pointed manifolds with $K_{g_i}^\ce > 0$ satisfying
$$(M_i, g_i, p_0) \longrightarrow (M, g, p_0) \qquad \text{ as} \quad i \to \infty$$
in the sense of the smooth Cheeger-Gromov convergence (cf.~Definition \ref{def_conv}).
\end{prop}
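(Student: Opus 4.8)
The plan is to build each $(M_i,g_i)$ as the double of a suitable \emph{graph} inside $M\times\R$, chosen so that the doubling is automatically smooth with the desired curvature sign. First I would fix the smooth strictly convex proper $\beta\colon M\to[0,\infty)$ from Theorem~\ref{GW1t} and, for each $i$, pick a smooth convex increasing function $\varphi_i\colon[0,i]\to[0,\infty)$ with $\varphi_i(i)=0$, $\varphi_i'(i)=-\infty$ (i.e.\ blowing up so the two copies glue $C^\infty$), and $\varphi_i\equiv\text{const}$ near $0$; then set $u_i:=\varphi_i\circ\beta$ on $C_i=\beta^{-1}([0,i])$ and let $\Gamma_i\subset M\times\R$ be the graph of $u_i$ with the induced metric from $g\oplus dt^2$. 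The double $M_i$ is obtained by gluing $\Gamma_i$ to its mirror image across the slice $M\times\{0\}$; because $u_i$ has vertical tangent along $\partial C_i$, the union is a $C^\infty$ closed hypersurface, hence $(M_i,g_i)$ is a smooth closed Riemannian manifold, pointed at $p_0=(p_0,\varphi_i(0))$.

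The heart of the argument is the curvature computation: one must show $K^\ce_{g_i}>0$. Here I would use the Gauss equation together with the second fundamental form of $\Gamma_i$ in $M\times\R$. The ambient $M\times\R$ has $K^\ce_{g\oplus dt^2}\ge 0$ (and $>0$ on planes with no $\partial_t$-component), so by the Gauss equation the intrinsic complex sectional curvature of $\Gamma_i$ is the ambient term (nonnegative) plus a correction built from the shape operator $S_i$ of the graph. Since $u_i=\varphi_i\circ\beta$ with $\beta$ \emph{strictly} convex and $\varphi_i$ convex increasing, $\Hess u_i=\varphi_i''\,d\beta\otimes d\beta+\varphi_i'\Hess\beta$ is positive definite, which forces the shape operator to be (sign-)definite; the standard fact that a hypersurface with definite second fundamental form in a space of $K^\ce\ge0$ has $K^\ce>0$ (the Gauss correction term $\Rm(u,v,\bar v,\bar u)\mapsto S(u,\bar u)S(v,\bar v)-|S(u,\bar v)|^2$ is nonnegative on $\C$-planes, and strictly positive where needed) then gives $K^\ce_{g_i}>0$. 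I would isolate this as the step most likely to require care, because one has to check the $\C$-bilinear extension of the Gauss equation and verify strict positivity on \emph{all} complex $2$-planes, including those tangent to directions where $\Hess\beta$ is merely positive semidefinite off its $d\beta$-eigendirection — this is exactly where strict convexity of $\beta$ (not just of $u_i$) is used, and it is the main obstacle.

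Finally I would address convergence $(M_i,g_i,p_0)\to(M,g,p_0)$ in the pointed smooth Cheeger--Gromov sense (Definition~\ref{def\_conv}). The point is that $\varphi_i$ can be chosen to be $\equiv 0$ on $[0,i-1]$ (say), so that the graph $\Gamma_i$ coincides isometrically with $(C_{i-1},g)$ on the sublevel $\beta^{-1}([0,i-1])$; hence for every $R>0$ and every $i$ large, the metric ball $B_{g_i}(p_0,R)$ is isometric to $B_g(p_0,R)$. One then takes the identity embeddings $B_g(p_0,R_i)\hookrightarrow M_i$ with $R_i\to\infty$ as the required exhausting diffeomorphisms, and smooth convergence is immediate (indeed the metrics agree, not just converge). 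A minor technical point is to arrange $\varphi_i$ smooth with $\varphi_i'(i^-)=-\infty$ while keeping the convexity and the flat region near $\beta^{-1}(i-1)$; this is a routine one-variable construction (e.g.\ a multiple of $-\sqrt{i-\beta}$ smoothed and damped near $\beta=i-1$), so it should not pose a genuine difficulty.
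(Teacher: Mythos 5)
Your construction (the graph of a reparametrized $\beta$ over $C_i$ inside $M\times\R$, doubled across the boundary level) is exactly the paper's, and your convergence argument is fine. The genuine gap is in the curvature step. First, the assertion that $\Hess u_i=\varphi_i''\,d\beta\otimes d\beta+\varphi_i'\,\Hess\beta$ is positive definite is false on the cap where you chose $\varphi_i\equiv\mathrm{const}$: there the second fundamental form vanishes identically and the graph is totally geodesic. That part is repairable (on the cap the tangent planes are horizontal, so the ambient term alone gives $K^\ce>0$), but you never say so, and the dichotomy matters for the second, more serious issue: the fold $\partial C_i$ where the two copies meet. There the graph description degenerates ($\varphi_i'=\infty$), the tangent space contains $\partial_t$, and for a complex plane $\sigma=\mathrm{span}_{\C}\{w,\partial_t\}$ the ambient term $R_{M\times\R}(w,\partial_t,\partial_t,\bar w)$ vanishes identically; strict positivity there hinges entirely on $h(\partial_t,\partial_t)>0$, i.e.\ on the normal curvature of the profile curve at the fold. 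This is not guaranteed by ``$|\varphi_i'|\to\infty$'': smoothness of the double requires $\varphi_i^{-1}$ to extend to a smooth \emph{even} function of the height across the fold (for instance $(i-\beta)^{1/3}$ blows up but yields only a $C^2$ double), and with the natural gluing condition that all derivatives of $\varphi_i^{-1}$ vanish at the fold --- which is what the paper imposes --- one gets $h(\partial_t,\partial_t)=0$ there and hence only $K^\ce\ge 0$, not $>0$.

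The paper accepts $K^\ce_{g_i}\ge 0$ for the double and upgrades to strict positivity by a different mechanism: it runs the Ricci flow on the closed manifold $M_i$ (a topological sphere) for a short time $t_i$ and invokes Proposition~\ref{hol} (Berger's holonomy classification together with the Brendle--Schoen strong maximum principle) to conclude $K^\ce_{g_i(t_i)}>0$, choosing $t_i$ so small that convergence to $(M,g,p_0)$ survives. Your direct route could probably be salvaged with the square-root profile you mention in passing ($\varphi_i^{-1}$ quadratic in the height near the fold makes the fold a non-degenerate parabola, and then $h>0$ there by the reflection symmetry and the strict convexity of $\beta$), but you would have to actually verify smoothness of the double and positivity of $h$ at the fold; neither is in your write-up, and as written the proof of $K^\ce_{g_i}>0$ does not go through.
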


\bdem
For each fixed $i$, consider $C_i$ as in \eqref{defCi}. The goal is to modify the metric $g|_{C_i}$ within $C_i \setminus C_{i - \eps}$. For that aim, let us choose any real function $\varphi_i$ such that
\bi
\item[(a)] $\varphi_i$ is smooth on $(-\infty, i)$ and continuous at $i$,
\item[(b)] $\varphi_i\equiv 0$ on $(-\infty, i - \eps]$ and $\varphi_i(i) = 1$. 
\item[(c)] $\varphi'_i$, $\varphi''_i$ are positive on $(i - \eps, i)$,
\item[(d)] $\varphi^{-1}_i$ has all left derivatives vanishing at $1$,
\ei
By (d) the derivative $\varphi_i'(s)$ tends to $\infty$ 
for $s\to i$. 
Now take $u_i := \varphi_i \circ \beta$ and 
put 
\begin{eqnarray*}
 G_i &=& \{(x, u_i(x)) \, : \, x \in C_i\}\\
 \tilde G_i&=& \{(x, 2-u_i(x)) \, : \, x \in C_i\}
\end{eqnarray*}
Note that the submanifolds $G_i$ and $\tilde{G}_i$ 
are isometric and (d) ensures that they paste smoothly together to a $C^\infty$ 
closed hypersurface 
$D(C_i)=G_i\cup \tilde{G}_i$ of $M\times \R$.

Clearly the induced metric 
of $G_i$ can be regarded as a deformation 
of the metric on $C_i$. 
Given the properties of $\varphi_i$ and $\beta$, 
it is straightforward to check 
that $u_i$ is a convex function. 
Using this and that $M\times \R$ has nonnegative complex sectional curvature, we deduce that $(M_i,g_i):=D(G_i)$ has nonnegative complex sectional curvature 
as well.

Notice that $C_{i - \eps}$ can be seen as a subset of $M_i$ for all $i >0$, which immediately implies that $(M_i,g_i,p_0)$ converges to $(M,g,p_0)$ 
in the Cheeger-Gromov sense. 
We now use the short time existence of the Ricci flow 
on $M_i$ (cf.~\cite{ham3D}), and choose $t_i>0$ so small 
that $(M_i,g_i(t_i),p_0)$ still converges to $(M,g,p_0)$.

Since $M_i$ is a topological sphere, we can employ 
Proposition~\ref{hol} to conclude that 
$K^\ce_{g_i(t_i)} > 0$. 
Thus $g_{i,new}=g_i(t_i)$ is a solution of our problem.
\edem

\subsection{Ricciflowing the approximating sequence}

Consider $\{(M_i, g_i, p_0)\}$ the sequence of closed, positively curved manifolds obtained above. For each fixed $i$ we can construct a Ricci flow $(M_i, g_i(t))$ defined on a maximal time interval $[0, T_i)$, with $T_i < \infty$, and such that $g_i(0) = g_i$. 

\subsubsection{A uniform lower bound for the lifespans}

 The first difficulty to address is that the curvature of $g_i$ will tend to infinity as $i \to \infty$, so it may happen that the maximal time of existence of the flow $T_i$ goes to zero as
$i$ tends to infinity. Then our next concern is to prove that the times $T_i$ admit a uniform lower bound $T_i \geq \mc T > 0$ for all $i$. The key to achieve such a goal is to estimate the volume growth of unit balls around $p_0$.
For such an estimate, we make a strong use of
\bt[Petrunin, cf.~\cite{Pet}] \lb{Pet} Let $(M^n, g)$ be a complete manifold with $K_g \geq -1$. Then for any $p$ in $M$
$$\int_{B_g(p, 1)} \scal_g \, d\mu_g \leq C_n,$$
for some constant $C_n$ depending only on the dimension.
\et

\begin{prop}\lb{LBT}
Let $(M, g)$ and $(M_i, g_i, p_0)$ be as in Proposition \ref{IniAp}. Then there exists a constant ${\mc T} > 0$, depending on $n$,  and $V_0:= \vle_{g} \(B_{g}(p_0, 1)\)$ (but independent of $i$), such that the Ricci flows $(M_i, g_i(t))$ with $g_i(0) = g_i$ are defined on $[0, {\mc T}]$, and satisfy $K^\ce_{g_i(t)} > 0$ for all $t\in [0, \mc T]$.
\end{prop}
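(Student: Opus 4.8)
\textbf{Proof strategy for Proposition \ref{LBT}.}

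The plan is to derive a uniform lower bound on the maximal existence time $T_i$ from a uniform \emph{non-collapsing} estimate for unit balls around $p_0$ in the evolved metrics $g_i(t)$, combined with a general principle: on a closed manifold with $K^\ce\ge 0$, if the flow survives only up to a small time $T_i$, then the curvature must blow up, and this forces the unit ball around $p_0$ to shrink to zero volume as $t\to T_i$. So the heart of the argument is to show such collapse \emph{cannot} happen on a definite time interval, because $\vle_{g_i(t)}(B_{g_i(t)}(p_0,1))$ stays bounded below by a constant depending only on $n$ and $V_0$.

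First I would record the monotonicity inputs. Since $K^\ce_{g_i}>0$ is preserved by the Ricci flow (Remark \ref{RemPCSC}(d), or \cite{hol} argument already invoked), the solutions $(M_i,g_i(t))$ satisfy $K_{g_i(t)}\ge 0$, hence $\Rc\ge 0$, so the metrics are pointwise nonincreasing in $t$: $g_i(t)\le g_i(0)$. This gives immediately that distances do not increase, so $B_{g_i(t)}(p_0,1)\supseteq B_{g_i(0)}(p_0,1)\supseteq B_{g}(p_0,1)$ up to the Cheeger-Gromov identification (using that $C_{i-\eps}\subset M_i$ isometrically, and for $i$ large $B_g(p_0,1)$ lies well inside $C_{i-\eps}$, so this ball is genuinely a subset of $M_i$ with the same metric). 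Next, the scalar curvature satisfies $\partial_t\scal=\Delta\scal+2|\Rc|^2\ge \Delta\scal$, so by the maximum principle (and since $\scal\ge 0$) the evolution does not create negative scalar curvature; more usefully, I would control the \emph{volume} of the unit ball from below. The key estimate is Petrunin's Theorem \ref{Pet}: applied to $(M_i,g_i(t))$, which has $K_{g_i(t)}\ge 0\ge -1$, it yields $\int_{B_{g_i(t)}(p_0,1)}\scal_{g_i(t)}\,d\mu_{g_i(t)}\le C_n$ for \emph{all} $t\in[0,T_i)$, uniformly in $i$.

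Now I combine these. Since $\partial_t\,d\mu_{g_i(t)}=-\scal_{g_i(t)}\,d\mu_{g_i(t)}$, and since the $g_i(t)$-unit ball around $p_0$ contains a fixed ball $B:=B_g(p_0,1)$ (as it is even bigger, by the distance-nonincreasing property), we get
\[
\frac{d}{dt}\,\vle_{g_i(t)}(B)=-\int_{B}\scal_{g_i(t)}\,d\mu_{g_i(t)}\ \ge\ -\int_{B_{g_i(t)}(p_0,1)}\scal_{g_i(t)}\,d\mu_{g_i(t)}\ \ge\ -C_n,
\]
so $\vle_{g_i(t)}(B)\ge V_0-C_n t$, which stays $\ge V_0/2$ for $t\le V_0/(2C_n)=:\mc T_0$. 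Hence on $[0,\min(T_i,\mc T_0)]$ the unit ball $B_{g_i(t)}(p_0,1)$ has volume at least $V_0/2>0$: the solution is uniformly non-collapsed at $p_0$. Now I invoke the standard ``no local collapsing forbids finite-time extinction at a definite scale'' mechanism. If $T_i<\mc T_0$, then by Hamilton's criterion $\sup_{M_i}|\Rm_{g_i(t)}|\to\infty$ as $t\to T_i$; a point-picking / Perelman-type argument (of the kind developed later in the paper, e.g. Proposition \ref{Int_BC}, but here only needed qualitatively near $p_0$) together with the local non-collapsing just established would produce a nontrivial $\kappa$-noncollapsed ancient limit, and since $K^\ce\ge 0$ is preserved in the limit one pins down enough structure to contradict finiteness of $T_i<\mc T_0$ — or, more directly, one appeals to the fact that on closed manifolds with bounded geometry the flow persists, and the only obstruction to extending past $\mc T_0$ would be curvature blowup at points arbitrarily far from (or near) $p_0$; the relevant control near $p_0$ is exactly the volume lower bound. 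I would therefore conclude $T_i\ge \mc T:=\mc T_0$ for all $i$, and $K^\ce_{g_i(t)}>0$ on $[0,\mc T]$ follows from Remark \ref{RemPCSC}(d) (preservation of strict positivity on the closed manifolds $M_i$, as already used in Proposition \ref{IniAp}).

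\textbf{Main obstacle.} The delicate point is the implication ``uniform non-collapsing at $p_0$ on $[0,\mc T_0)$ $\Rightarrow$ $T_i\ge\mc T_0$'': a priori the curvature could blow up \emph{away} from $p_0$ while the geometry near $p_0$ stays fine, and on the closed manifold $M_i$ that still kills the flow. So what is really needed is either (i) a global lower bound on injectivity radius / volume ratios propagating from $p_0$ using the nonnegative curvature and the a priori structure of $M_i$ (which is a smoothed double, hence has controlled geometry at infinity for each fixed $i$), or (ii) a Perelman $\mathcal W$-functional argument on $M_i$ seeded by the fixed ball at $p_0$ — noting $\mc W$ is bounded below in terms of the pointed non-collapsing — to rule out the formation of a collapsing singularity model. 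I expect the paper pushes the Petrunin estimate further (it is billed as ``the key to achieve such a goal''): Petrunin's bound holds at \emph{every} point $p$ of $(M_i,g_i(t))$, not just $p_0$, so one gets $\int_{B_{g_i(t)}(p,1)}\scal\le C_n$ \emph{globally}, hence a global integral scalar bound on unit balls; feeding this into a contradiction argument with the blowup rate at a first singular time gives the uniform $T_i\ge\mc T$. That is the step I would expect to require the most care.
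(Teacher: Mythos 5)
Your first half matches the paper exactly: Petrunin's estimate plus $\partial_t\, d\mu = -\scal\, d\mu$ and the fact that $\Ric_{g_i(t)}>0$ makes $B_{g_i(0)}(p_0,1)\subset B_{g_i(t)}(p_0,1)$, giving $\vle_{g_i(t)}(B_{g_i(0)}(p_0,1))\ge \vle_{g_i(0)}(B_{g_i(0)}(p_0,1))-C_n t$. The gap is in the step you yourself flag as the ``main obstacle'': you never actually deduce $T_i\ge \mc T$ from this volume bound, and the routes you sketch (a point-picking/ancient-limit contradiction, a $\mathcal W$-functional argument, a global version of Petrunin) are all left as speculation. Your worry that ``curvature could blow up away from $p_0$ while the geometry near $p_0$ stays fine'' is legitimate as stated, but it is dissolved by an observation you missed rather than by any of the heavy machinery you propose.

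The missing idea is Remark \ref{RemPCSC}(e): each $M_i$ is \emph{closed} with $K^\ce_{g_i}>0$, so by Brendle--Schoen the normalized flow converges to a round metric; equivalently, the unnormalized flow shrinks to a round point at the finite maximal time $T_i$, and in particular $\vle_{g_i(t)}(M_i)\to 0$ as $t\to T_i$. A fortiori $\vle_{g_i(t)}(B_{g_i(0)}(p_0,1))\to 0$ as $t\to T_i$. Combined with the linear lower bound $\vle_{g_i(t)}(B_{g_i(0)}(p_0,1))\ge \vle_{g_i(0)}(B_{g_i(0)}(p_0,1))-C_n t$, this forces $T_i\ge \vle_{g_i(0)}(B_{g_i(0)}(p_0,1))/C_n$, and since $g_i(0)\to g$ near $p_0$ this converges to $V_0/C_n=:2\mc T$, giving the uniform bound. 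So no non-collapsing propagation, injectivity radius control, or blow-up analysis is needed at this stage: the global input is not Petrunin applied at every point, but simply that positive complex sectional curvature on a closed manifold forces total-volume extinction at the singular time. Without supplying this (or some equivalent global mechanism), your argument does not close.
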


\bdem
For each $i$, $(M_i, g_i)$ is a closed $n$-manifold; so the classical short time existence theorem in \cite{ham3D} ensures that there exists some $T_i > 0$ and a unique maximal Ricci flow $(M_i, g_i(t))$ defined on $[0, T_i)$ with $g_i(0) = g_i$. Moreover, $K^\ce_{g_i(t)} > 0$, since this is true for $t = 0$ by Proposition \ref{IniAp}, and positive complex sectional curvature is preserved under the Ricci flow (cf.~Remark \ref{RemPCSC} (d)).

Observe that ${\rm Ric}_{g_i(t)} > 0$ implies $B_{g_i(0)}(p_0,1)\subset B_{g_i(t)}(p_0,1)$.
Using the evolution equation of the Riemannian volume element $d\mu_{g_i(t)}$ under the Ricci flow and applying Theorem \ref{Pet}, we get
\bec \lb{Pet_I}
\parcial{}{t} \vle_{g_i(t)} \(B_{g_i(0)} (p_0, 1)\) = - \int_{B_{g_i(0)} (p_0, 1)} \scal_{g_i(t)} \, d\mu_{g_i(t)} \geq -C_n.
\eec
Hence
\bec \lb{vle_ftc}
\vle_{g_i(t)} \(B_{g_i(0)} (p_0, 1)\) - \vle_{g_i(0)} \(B_{g_i(0)} (p_0, 1)\) \geq -C_n t.
\eec

On the other hand, as $K^\ce_{g_i} > 0$, we know (cf.~Remark \ref{RemPCSC} (e)) that the volume of  $(M_i, g_i(t))$ vanishes completely at the maximal time $T_i$ so that
$$T_i \geq \frac{\vle_{g_i(0)} \(B_{g_i(0)} (p_0, 1)\)}{C_n}\stackrel{i\to\infty}{\longrightarrow} \frac{\vle_{g} \(B_{g} (p_0, 1)\)}{C_n}=: 2 \mc T.$$
\edem

As a consequence, we obtain a uniform (independent of $t$ and $i$) lower bound for the volume of unit balls centered at the soul point:

\bco \lb{Cor_vol}
For the sequence of pointed Ricci flows $(M_i, g_i(t), p_0)_{t \in [0, \mc{T}]}$  from Proposition \ref{LBT}, we can find a constant $v_0 = v_0(n, V_0)$ satisfying
$$\vle_{g_i(t)} \(B_{g_i(t)} (p_0, 1)\) \geq v_0 >0 \qquad \text{for any } \quad t \in [0,\mc{T}].$$
\eco

\bdem
Using again \eqref{Pet_I} and $t \leq \mc T := V_0/(2 C_n)$, we obtain
\begin{align*}
\vle_{g_i(t)} \(B_{g_i(t)} (p_0, 1)\) &\geq \vle_{g_i(t)} \(B_{g_i(0)} (p_0, 1)\) \geq \vle_{g_i(0)} \(B_{g_i(0)} (p_0, 1)\) - C_n t 
\\ & \geq \frac3{4} V_0 - C_n \mc T = \frac{V_0}{4} =: v_0 > 0.
\end{align*}
\edem

\subsubsection{Interior curvature estimates around the soul point}

The first step in order to get a limiting Ricci flow starting on $(M, g)$ from the sequence $(M_i, g_i(t))$ is to obtain uniform (independent of $i$, but maybe depending on time and distance to $p_0$)
 curvature estimates. We first need an improved version of \cite[11.4]{P1}:
\begin{lem}\lb{AVR0}
Let $(M^n, g(t))$, $t\in (-\infty, 0]$ be an open, non-flat ancient solution of the Ricci flow. Assume further that $g(t)$ has bounded curvature operator, and that $K^\ce_{g(t)} \geq 0$.
Then  $\ds\lim_{r \to \infty} \tfrac{\vle_{g(t)}\( B_{g(t)}(\ccdot, r)\)}{r^n}$ vanishes for all $t$.
\end{lem}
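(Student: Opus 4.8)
The plan is to rule out positive asymptotic volume ratio (AVR) for such ancient solutions by combining Perelman's Harnack-type arguments with the rigidity that forces flatness. First I would recall that nonnegative complex sectional curvature implies $\Rm_{g(t)}\ge 0$ in particular $\Ric_{g(t)}\ge 0$, so by the Bishop--Gromov comparison the quantity $\vle_{g(t)}(B_{g(t)}(\cdot,r))/r^n$ is monotone nonincreasing in $r$ and hence the limit $\operatorname{AVR}(g(t))$ exists in $[0,1]$; moreover by the evolution of distances under Ricci flow (and the fact that $\Ric\ge 0$ makes metrics shrink) one checks $\operatorname{AVR}(g(t))$ is independent of $t$. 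The goal is then to show $\operatorname{AVR}\equiv 0$ on any non-flat such solution.

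The core argument follows \cite[11.4]{P1}: assuming $\operatorname{AVR}=a>0$, one blows down the ancient solution at a fixed point. Pick any sequence $\lambda_j\to\infty$ and consider the rescaled ancient flows $g_j(t):=\lambda_j^{-1}g(\lambda_j t)$ based at a fixed point $p$. Because $\Ric\ge 0$, $K^\ce\ge0$ and the AVR is a scale-invariant positive constant, the rescaled solutions are uniformly noncollapsed at all scales; together with the bounded-curvature hypothesis and Hamilton's compactness theorem for ancient solutions with nonnegative curvature operator, a subsequence converges in the pointed Cheeger--Gromov sense to a limiting ancient solution $(M_\infty,g_\infty(t),p_\infty)$ which still has $\Rm\ge 0$, $K^\ce\ge0$, and $\operatorname{AVR}(g_\infty)=a>0$. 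By Bishop--Gromov, an open $n$-manifold with $\Ric\ge0$ and $\operatorname{AVR}=1$ is isometric to $\R^n$; here the blow-down has maximal volume growth, and one argues (via the equality discussion in volume comparison, using that each time slice is a metric cone at infinity in the Gromov--Hausdorff sense while also being smooth with $\Ric\ge0$) that $g_\infty(t)$ is flat. Then applying the trace Harnack inequality of Hamilton, valid here since $K^\ce\ge0$ by Brendle \cite{BreHar} (as invoked right after Theorem~\ref{mainT}), to the original solution: $\partial_t \scal + 2\langle\nabla\scal,v\rangle + 2\Ric(v,v)\ge 0$ for all $v$, so in particular $\partial_t(t\scal)\ge 0$, which together with the blow-down being flat and the strong maximum principle for the scalar curvature evolution forces $\scal_{g(t)}\equiv 0$, whence $\Ric\equiv 0$ and, since $\Rm\ge0$ with zero Ricci gives $\Rm\equiv0$, the solution is flat---contradicting the non-flatness assumption.

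The main obstacle I expect is the rigidity step: passing from ``the blow-down has positive (indeed maximal-type) asymptotic volume ratio'' to ``the blow-down, and hence the original flow, is flat.'' Perelman handles the positive-AVR case using the monotonicity of the reduced volume and the fact that a nonflat $\kappa$-noncollapsed ancient solution with bounded nonnegative curvature operator has a shrinking structure at $t\to-\infty$ that is incompatible with maximal volume growth; carrying this out cleanly requires either reduced-volume monotonicity (Perelman's $\ell$-geometry) or an asymptotic-cone argument, and one must be careful that the limiting soliton/cone obtained is genuinely Euclidean rather than merely a product. A safe route is: show the blow-up limit at $t\to -\infty$ is a nontrivial shrinking gradient soliton (Perelman), observe a nonflat shrinker cannot have $\operatorname{AVR}>0$ since shrinkers with $\Ric\ge0$ have sub-Euclidean volume growth unless flat, and derive the contradiction; alternatively, invoke \cite[Corollary 11.6]{P1} directly. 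I would present the short-time reduction and the Bishop--Gromov/Harnack bookkeeping in detail, and cite Perelman for the asymptotic-soliton rigidity that furnishes the contradiction.
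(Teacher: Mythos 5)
Your proposal contains a genuine error right at the start and an unclosed gap at the end, and it also misses that the paper's proof is of a completely different nature. The paper does not re-prove Perelman's \cite[11.4]{P1}: it observes that the $\kappa$-noncollapsing hypothesis was already removed by Ni \cite{Ni}, and then checks that the only places in Perelman's original argument where $\Rm_{g(t)}\ge 0$ is used beyond $K_{g(t)}\ge 0$ are the trace Harnack inequality and Hamilton's strong maximum principle, both of which have substitutes valid under the weaker hypothesis $K^\ce_{g(t)}\ge 0$ (Brendle \cite{BreHar} and Brendle--Schoen \cite[Proposition 9]{BrSch2}); the rest goes through verbatim. Your opening claim that ``nonnegative complex sectional curvature implies $\Rm_{g(t)}\ge 0$'' is false: the implication runs the other way, $\Rm\ge 0\Rightarrow K^\ce\ge 0\Rightarrow K\ge 0$, with the converse failing for $n\ge 4$ (see Remark~\ref{RemPCSC}(b)). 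If your claim were true the lemma would be an immediate corollary of Perelman--Ni and there would be nothing to prove; the entire content of the lemma is precisely the relaxation from $\Rm\ge 0$ to $K^\ce\ge 0$. The same error propagates into your appeal to ``Hamilton's compactness theorem for ancient solutions with nonnegative curvature operator.'' (What is true and usable is $K^\ce\ge 0\Rightarrow \Ric\ge 0$, so Bishop--Gromov and the existence of the asymptotic volume ratio are fine.)

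The second problem is the rigidity step, which you yourself flag as the main obstacle but do not close. The inference ``the blow-down limit is flat, hence by the trace Harnack and the strong maximum principle $\scal_{g(t)}\equiv 0$'' does not work as stated: on an ancient solution the trace Harnack gives $\partial_t\scal\ge 0$, i.e.\ $\scal$ is pointwise nondecreasing in $t$, so knowing that $\lambda_j\scal_{g(\lambda_j t)}(p)\to 0$ (flatness of the blow-down) controls $\scal$ only at very negative times and gives no upper bound at $t=0$; the monotonicity points in the wrong direction for your purpose. Your alternative ``safe route'' via the asymptotic shrinking soliton would need the soliton's existence (Perelman's reduced-volume machinery, legitimate once AVR~$>0$ forces noncollapsing) plus the fact that a non-flat shrinker with $\Ric\ge 0$ has vanishing AVR, and each of these would again have to be checked under $K^\ce\ge 0$ rather than $\Rm\ge 0$ --- which is exactly the bookkeeping the paper's proof performs on Perelman's original induction-on-dimension argument instead. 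As written, the proposal neither reproduces that check nor supplies a complete independent argument.
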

\bdem
The $k$-noncollapsed assumption from 11.4 in \cite{P1} was already removed in \cite{Ni}. So it only remains to ensure that we can relax $\Rm_{g(t)} \geq 0$ to $K^\ce_{g(t)} \geq 0$. One can go through the original proof and check that the only instances in which one needs the full $\Rm_{g(t)} \geq 0$ (instead of just $K_{g(t)} \geq 0$) is when one applies Hamilton's trace Harnack inequality (cf.~\cite{HamHar}) or Hamilton's strong maximum principle in \cite{ham4D}. But under our weaker assumption we can replace them by Brendle's trace Harnack in \cite{BreHar} and the strong maximum principle of Brendle and Schoen \cite[Proposition 9]{BrSch2} (see also the Appendix of \cite{Wi}). The rest of the proof proceeds verbatim as the original one.
\edem

\begin{prop}\lb{Int_BC}
Consider the Ricci flows $(M_i, g_i(t))$, with $t \in [0, \mc T]$, coming from Proposition \ref{LBT}. 
For any $D > 0$ there exists a constant $C_D > 0$ such that
$$\scal_{g_i(t)} (x) \leq \frac{C_D}{t} \qquad \text{for all }  \quad i \geq 1, \quad x \in B_{g_i(t)}(p_0, D) \quad \text{and} \quad t \in (0, \mc T].$$
\end{prop}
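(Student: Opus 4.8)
The plan is to combine a Perelman-style point-picking argument with the two uniform estimates already established, namely the lower volume bound on unit balls around the soul (Corollary~\ref{Cor_vol}) and the asymptotic-volume-ratio decay for ancient solutions (Lemma~\ref{AVR0}). Suppose the conclusion fails. Then there is a fixed $D>0$, a sequence of flows $(M_i,g_i(t))$, points $x_i\in B_{g_i(t_i)}(p_0,D)$ and times $t_i\in(0,\mc T]$ with $t_i\,\scal_{g_i(t_i)}(x_i)\to\infty$. Since every $(M_i,g_i)$ has $K^\ce_{g_i}>0$ and in particular nonnegative sectional curvature, and since by Corollary~\ref{Cor_vol} the unit ball around $p_0$ has volume $\ge v_0$ uniformly in $i$ and $t$, a standard point-selection lemma (the parabolic version of Perelman's \cite[10.1]{P1}, applied on the ball of radius $2D$ around $p_0$ with the weight $t$) produces rescaled points $(y_i,s_i)$, with $s_i\le t_i$, at which the curvature is almost maximal in a parabolic neighborhood whose size, measured in the rescaled metric $\hat g_i:=Q_i\,g_i(s_i+\cdot/Q_i)$ where $Q_i=\scal_{g_i(s_i)}(y_i)$, tends to infinity in both the space and the (backward) time directions, while $Q_i\to\infty$ and $s_i Q_i\to\infty$.

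The next step is to extract a limit. The rescaled flows $(M_i,\hat g_i(t),y_i)$ have $\hat R(y_i,0)=1$, curvature bounded on larger and larger parabolic cylinders, $K^\ce_{\hat g_i}\ge 0$ (scale-invariant), and—this is the crucial point—they are uniformly noncollapsed: the volume lower bound of Corollary~\ref{Cor_vol}, together with the $\kappa$-noncollapsing theorem applied on the balls $B_{g_i(s_i)}(p_0,1)$ and the fact that $d_{g_i(s_i)}(y_i,p_0)\le 2D$ and $Q_i\to\infty$, gives a uniform $\kappa$. By Hamilton's compactness theorem we pass to a subsequence converging to a complete pointed Ricci flow $(M_\infty,\hat g_\infty(t),y_\infty)$, $t\in(-\infty,0]$, which is a nonflat ancient solution with bounded nonnegative curvature operator in fact with $K^\ce_{\hat g_\infty}\ge 0$, $\hat R_{\hat g_\infty}(y_\infty,0)=1$, and which is $\kappa$-noncollapsed. (One uses here that $K^\ce\ge0$ passes to the limit and that, being an ancient solution with nonnegative curvature operator arising as such a blow-up, it has bounded curvature on each time slice; alternatively one invokes Hamilton's trace Harnack, valid here by Brendle \cite{BreHar}, to bound curvature backward in time from the bound at $t=0$.)

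Now Lemma~\ref{AVR0} applies to $(M_\infty,\hat g_\infty(t))$ and forces
\[
\lim_{r\to\infty}\frac{\vle_{\hat g_\infty(0)}\bigl(B_{\hat g_\infty(0)}(y_\infty,r)\bigr)}{r^n}=0.
\]
On the other hand, one shows that the blow-up limit must have \emph{positive} asymptotic volume ratio, producing a contradiction. This is where the soul point and the convexity structure enter: because $y_i$ lies within bounded distance $2D$ of $p_0$ while the scale $Q_i\to\infty$, the rescaled balls around $y_i$ of any fixed radius $r$ eventually engulf, and are engulfed by, rescalings of large metric balls around $p_0$ in $(M_i,g_i(s_i))$; and by the Bishop–Gromov inequality together with the uniform lower bound $\vle_{g_i(s_i)}(B(p_0,1))\ge v_0$ and $K_{g_i(s_i)}\ge 0$, the volumes of those large balls grow at a definite rate. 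Quantitatively, one transplants the volume lower bound along the sequence to conclude that the limit's unit ball has volume bounded below by a universal constant and, using nonnegative curvature (so volume ratios are monotone nonincreasing in $r$ and bounded below by the volume of the unit ball after suitable normalization only if the ratio is constant — which is excluded by nonflatness), one gets $\mathrm{AVR}(\hat g_\infty(0))>0$. This contradicts the display above. Hence the assumed sequence cannot exist, and the claimed estimate $\scal_{g_i(t)}(x)\le C_D/t$ holds on $B_{g_i(t)}(p_0,D)$ for all $i$ and all $t\in(0,\mc T]$.

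The main obstacle is the last paragraph: propagating a \emph{uniform positive} lower bound on the asymptotic volume ratio through the blow-up. The volume control one has is only on unit balls around the moving soul point $p_0$, at the unrescaled scale, whereas the contradiction requires volume control at all scales around the blow-up point $y_\infty$, which is at bounded distance from $p_0$ only \emph{before} rescaling. Bridging this gap is exactly where one must use, carefully, that nonnegative sectional curvature makes the volume ratio $\vle(B(\cdot,r))/r^n$ monotone, that $d_{g_i(s_i)}(p_0,y_i)$ stays bounded while $Q_i\to\infty$ so that small rescaled balls around $y_i$ sit inside balls around $p_0$ of radius $\to 0$, and — running the comparison in the other direction — that large metric balls around $p_0$ have volume $\gtrsim v_0 r^n/C_n'$; combining these pinches the limiting volume ratio between two positive constants, contradicting Lemma~\ref{AVR0}. (This is the content of Perelman's original argument in \cite[11.4, 12.1]{P1}; the only novelty is checking, as in Lemma~\ref{AVR0}, that $K^\ce\ge0$ suffices in place of $\Rm\ge0$, for which one again substitutes Brendle's Harnack inequality and the Brendle–Schoen strong maximum principle for the relevant steps.)
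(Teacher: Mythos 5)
Your proposal is correct and follows essentially the same route as the paper: contradiction, Perelman-style point picking near $p_0$, parabolic rescaling, Hamilton compactness to an ancient solution with $K^\ce\ge 0$, and a contradiction with Lemma~\ref{AVR0} obtained by transplanting the unit-ball volume bound of Corollary~\ref{Cor_vol} via the inclusion $B(p_0,r-D-1)\subset B(\bar p_k,r)$, Bishop--Gromov monotonicity, and scale invariance of the volume ratio up to radius $\sim\sqrt{Q_k}\to\infty$. The only cosmetic difference is your invocation of a $\kappa$-noncollapsing theorem, which the paper bypasses by using the volume ratio bound directly together with Cheeger--Gromov--Taylor for the injectivity radius.
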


\bdem
Assume, on the contrary, that we can find a constant $D_0 > 0$ so that there exist indices $i_k \geq 1$ (for brevity, let us denote as $(M_k, g_k(t))$ the corresponding subsequence $(M_{i_k}, g_{i_k}(t))$), and sequences of  times $t_k \in (0, \mc T)$ and points $p_k \in B_k(p_0, D_0)$ (hereafter $B_k = B_{g_k(t_k)}$, $\scal_k = \scal_{g_k(t_k)}$ and $d_k = d_{g_k(t_k)}$) satisfying
\bec \lb{cont_hyp}
\scal_k (p_k) > 4^k/t_k.
\eec

{\it Claim 1.} We can find a sequence of points $\{\bar p_k\}_{k \geq k_0}$ which satisfy \eqref{cont_hyp} and 
\begin{align*}
\scal_{g_k(t)}(p) \leq 8 \, \scal_k(\bar p_k)  \quad \text{ for all} \quad \left\{\ba{l}  p\in B_k\bigl(\bar p_k, \tfrac{k}{\sqrt{\scal_k(\bar p_k)}}\bigr), \medskip \\  t\in \big[t_k - \tfrac{k}{\scal_k(\bar p_k)}, t_k\big]  \ea\right.
\end{align*}
with $d_k(\bar p_k, p_0) \leq D_0 + 1$.

Notice that it is enough to prove
\begin{align}
\scal_k (p) \leq 4 \, \scal_k(\bar p_k)  \quad \text{ for all } \quad p \in B_k\big(\bar p_k, \tfrac{k}{\sqrt{\scal_k(\bar p_k)}}\big), \lb{Cl1_1}
\end{align}
with $d_k(\bar p_k, p_0) \leq D_0 + 1$.
In fact, as $K^\ce_{g_k(t)} \geq 0$,  we can apply the  trace Harnack inequality in \cite{BreHar} (which, in particular, gives $\parcial{}{t} (t \scal_{g(t)}) \geq 0$). This yields for any $t \in \big[t_k - \tfrac{k}{\scal_k(\bar p_k)}, t_k\big]$  
$$\scal_{g_k(t)} \leq \frac{t_k}{t} \scal_k \leq \frac{t_k}{t_k - k/\scal_k(\bar p_k)} \scal_k < 2 \scal_k,$$
where we have used that \eqref{cont_hyp} implies $\tfrac{k}{\scal_k(\bar p_k)} <  t_k \, \tfrac{k}{4^k}  < \tfrac{t_k}{4}.$

So our goal is to find $\bar p_k$ satisfying \eqref{cont_hyp} and \eqref{Cl1_1}. If \eqref{Cl1_1} does not hold for $\bar p_k = p_k$, it means that there exists a point $x_1 \in B_k\big(p_k, \tfrac{k}{\sqrt{\scal_k(p_k)}}\big)$ such that $\scal_k(x_1) > 4 \scal_k(p_k)$. Next, check if  \eqref{Cl1_1} holds for $\bar p_k = x_1$, namely, if  
$$\scal_k(p) \leq 4 \, \scal_k(x_1) \qquad \text{ for all } \quad p\in B_k\big(x_1, \tfrac{k}{\sqrt{\scal_k(x_1)}}\big).$$
In case this is not satisfied, we iterate the process and, accordingly, we construct a sequence of points $\{x_j\}_{j \geq 2}$ such that
$$x_j \in B_k\Bigl(x_{j-1}, \tfrac{k}{\sqrt{\scal_k(x_{j-1})}}\Bigr)
\quad \text{and} \quad \scal_k(x_j) > 4  \, \scal_k(x_{j -1}).$$

Thus $(x_j)_{j\in \N} $ is a Cauchy sequence and
a straightforward computation shows that 
it stays in the relatively compact ball $B_k(p_0, D_0 + 1)$. Because of
$$\lim_{j \to \infty} \scal_k(x_j) = \infty$$
this gives a contradiction. In conclusion, there exists $\ell \in \mathbb N$ such that $\bar p_k$ can be taken to be $x_\ell$.

Now from Claim 1 it follows that $B_{k}(p_0, r) \subset B_k(\bar p_k, r + D_0 + 1)$. Then for $r\in [D_0 + 3/2, D_0 + 2]$, using Corollary \ref{Cor_vol} with $\vle_k = \vle_{g_k(t_k)}$, we get
\begin{align*}
\frac{\vle_k \(B_k(\bar p_k, r)\)}{r^n} &\geq \frac{\vle_k \(B_k(p_0, r - D_0 - 1)\)}{r^n} 
 \geq \vle_k \(B_k(p_0, 1)\) \(\frac{r - D_0 - 1}{r}\)^n
 \\& \geq   \frac{v_0/2^n}{(D_0 + 2)^n} =: \tilde v_0 > 0.
\end{align*}
Next, Bishop-Gromov's comparison theorem ensures that the above conclusion is true even for smaller radius:
\bec \lb{vol_br}
\frac{\vle_k \(B_k (\bar p_k, r)\)}{r^n} \geq \tilde v_0 > 0 \quad \text{ for} \quad 0 < r \leq D_0 + 2.
\eec

After a parabolic rescaling of the metric
$$\tilde g_k(s) = Q_k g(\ccdot, t_k + s Q_k^{-1}) \qquad \text{ for } \qquad Q_k = \scal_k(\bar p_k) > 4^k/\mc T,$$
using $K_{\tilde g_k(s)} > 0$, Claim 1 says that for $k \geq k_0$
\begin{align}
|\Rm|_{\tilde g_k(s)} \leq \scal_{\tilde g_k(s)} \leq 8  \quad \text{ on} \quad   B_{\tilde g_k(0)}(\bar p_k, k) \quad \text{ for all } \quad s\in [- k, 0].  
\lb{Cl1_resc}
\end{align}

In addition, as the volume ratio in \eqref{vol_br} is scale-invariant, we have
\bec \lb{Vol_rat}
 \frac{\vle_{\tilde g_k(0)} \(B_{\tilde g_k(0)} (\bar p_k, r)\)}{r^n} \geq \tilde v_0 > 0 \quad \text{ for} \quad 0 < r \leq (D_0 + 2) \sqrt{Q_k}.
\eec 
Combining this and \eqref{Cl1_resc} with Theorem \ref{ChGrT} gives
$${\rm inj}_{\tilde g_k(0)}(\bar p_k) \geq   c(n, \tilde v_0).$$
Joining the above estimate to \eqref{Cl1_resc}, we are in a position to apply  Hamilton's compactness (cf.~Theorem \ref{locHCT}) to the pointed sequence
$$(M_k, \tilde g_k(s), \bar p_k), \qquad s \in [-k, 0]$$
to obtain a subsequence converging, in the smooth Cheeger-Gromov sense to a smooth limit solution of the Ricci flow
$$(M_\infty, g_\infty(t), p_\infty) \qquad t\in (-\infty, 0]$$
which is complete, non compact (since the diameter with respect to $\tilde g_k(s)$ tends to infinity with $k$ because $Q_k \to \infty$), non-flat (as $\scal_{g_\infty(0)} (p_\infty) = 1$),
of bounded curvature (more precisely, $|\Rm|_{g_\infty(t)} \leq 8$ on $M_\infty \times (-\infty, 0]$), and with $K_{g_\infty(t)}^\ce \geq 0$. Moreover, from \eqref{Vol_rat} and volume comparison, we have
$$\tilde v_0 \leq \frac{\vle_{g_\infty(0)}\( B_{g_\infty(0)} (p_\infty, r)\)}{r^n} \leq \omega_n \qquad \text{for all} \quad r > 0.$$
Therefore, the limit of the volume ratio as $r \to \infty$ also lies between two positive constants, which contradicts Lemma \ref{AVR0}.
\edem

\subsection{Proof of short time existence for the positively curved case}

\bt \lb{mainT_pos}
Let $(M^n, g)$ be an open manifold with $K^\ce_g > 0$ (and possibly unbounded curvature). Then there exists $\mc T > 0$ and a sequence of closed Ricci flows $(M_i, g_i(t), p_0)_{t \in [0, \mc T]}$ with $K^\ce_{g_i(t)} > 0$ which converge in the smooth Cheeger-Gromov sense to a complete limit solution of the Ricci flow
$$(M, g_\infty(t), p_0) \qquad \text{for } \quad t\in [0, \mc T],$$
with $g_\infty(0) = g$.
\et

\bdem
Consider the sequence $(M_i, g_i(t))$, with $t \in [0, \mc T]$, coming from Proposition \ref{LBT}.
Take some convex compact 
set $C_{j+1}= \beta^{-1}((-\infty,j+1])\subset M$
 from the convex 
exhaustion endowed with the Riemannian metric $g$. 
By the construction in Proposition \ref{IniAp}, we can view $C_{j+1}$ also 
as a subset of $M_i$ for $i\ge j+2$. 
Moreover, the metric $g_i(0)$ on $C_{j+1}$  converges to $g$ 
in the $C^\infty$ topology.
By Proposition~\ref{Int_BC} there is some constant 
$L_j$ with 
\begin{equation}\label{chen1}
 |\Rm_{g_i(t)}|\le \tfrac{L_j}{t} \qquad\mbox{ on  $B_{g_i(t)}(C_{j+1},1)$ \quad for all $t\in (0,\mc T]$
\quad and \quad $i\ge j+2$.}
\end{equation}

Since the metric $g_i(0)$ converges on $C_j$ in the $C^\infty$ 
topology to $g$ we can choose $\rho>0$ so small 
that 
\begin{equation}\label{chen2}
 |\Rm_{g_i(0)}|\le \rho^{-2 } \qquad \mbox{ on \quad  $C_{j+1}$ \qquad 
for \quad $i\ge j+2$.}
\end{equation}
After possibly decreasing $\rho$ we may assume 
that the $\rho$-neighborhood 
of $C_j$ is contained in $C_{j+1}$ with respect to the metric $g_i(0)$ for $i\ge j+2$. 
Combining the inequalities \eqref{chen1} and \eqref{chen2}
we are now in a position to apply  Theorem~\ref{SURF} in order to deduce that 
for some constant $\hat L_j>0$ we have 
\begin{equation}\label{chen3}
 |\Rm_{g_i(t)}|\le \hat{L}_j \quad \mbox{ on  \quad $C_{j}$ \quad for all \quad $t\in [0,\mc T]$ \quad 
and \quad $i\ge j+2$.}
\end{equation}

Combining this with an extension of Shi's estimate 
as stated in Theorem~\ref{ModShi}, we reach furthermore
\begin{equation*}
 |\nabla^k\Rm_{g_i(t)}|\le \hat{L}_{j,k} \quad \mbox{ on \quad  $C_{j}$ \quad for all \quad $t\in [0,\mc T]$ \quad 
and \quad $i\ge j+2$.}
\end{equation*}

From here,
 by standard arguments as in \cite{Hamcpt} (see Lemma 2.4 and remarks after it), we have that the metrics $g_i(t)$ on $C_j$ have all space and time derivatives uniformly bounded. Hence one can apply the Arzel\`a-Ascoli-Theorem to deduce that after passing to 
a subsequence $g_i(t)$ converges to $g_\infty(t)$ 
in the $C^\infty$ topology on $C_j\times [0,\mc T]\subset M \times \R$. 

Doing this for all $j\in \N$
 and applying the usual diagonal sequence argument we 
can, after passing to subsequence, assume that   $g_i(t)$ converges in the $C^\infty$ topology to 
a limit metric $g_\infty(t)$ on $C_j\times [0,\mc T]$ for all $j$.
By construction $g_{\infty}(t)$ 
is  a solution of the Ricci flow on $M$ 
with initial metric $g_\infty(0)=g$.
The completeness of $g_\infty(t)$ is a consequence of the following Lemma.
\edem

\begin{lem}\label{lem: complete1} There exists $L>0$ such that  
$B_{g_\infty(t)}(p_0,r)\subset B_{g_\infty(0)}(p_0,2r+L(t+1))$ 
for all positive $r$ and $t\in [0,\mc T]$.
\end{lem}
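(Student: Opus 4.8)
The plan is to obtain a uniform (in $i$) distance-distortion estimate on the approximating Ricci flows $(M_i, g_i(t))$ and then pass it to the limit, using the fact that $C_{i-\eps}$ sits inside $M_i$ and that $p_0$ is the soul point. The key analytic input is the same Petrunin-type scalar-curvature bound used in Proposition~\ref{LBT}, combined with the nonnegativity of $K^\ce_{g_i(t)}$ (hence ${\rm Ric}_{g_i(t)} \geq 0$), which already gives one-sided control: since ${\rm Ric}_{g_i(t)} \geq 0$, balls do not shrink, i.e. $B_{g_i(0)}(p_0, r) \subset B_{g_i(t)}(p_0, r)$, so the only real task is to bound how much $g_i(t)$-balls can \emph{grow} relative to $g_i(0)$-balls.

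\textbf{Step 1: A differential inequality for the distance function.} For a fixed point $q$ and the approximating flow $g_i(t)$, I would estimate $\tfrac{d}{dt} d_{g_i(t)}(p_0, q)$ along the flow. The standard computation (Hamilton, Perelman) gives $\tfrac{d}{dt} d_{g_i(t)}(x,y) \geq -\int_\gamma {\rm Ric}_{g_i(t)}(\dot\gamma,\dot\gamma)$ along a minimizing geodesic $\gamma$ joining $x$ to $y$, and by Perelman's lemma (the ``changing distances'' estimate, Lemma 8.3(a) in \cite{P1}), whenever the scalar curvature — equivalently, since $K^\ce \geq 0$ forces $0 \le {\rm Ric} \le \scal\cdot g$, the Ricci curvature — is bounded by $K$ on balls of radius $r_0$ around the two endpoints, one gets a bound of the form $\tfrac{d}{dt} d_{g_i(t)}(x,y) \geq -C(n)(\sqrt{K} r_0 + r_0^{-1})$. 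The point is that we need such a bound only near $p_0$ and along the geodesic, and the interior estimate of Proposition~\ref{Int_BC} gives $\scal_{g_i(t)} \leq C_D/t$ on $B_{g_i(t)}(p_0, D)$.

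\textbf{Step 2: Integrating the estimate.} Using $\scal_{g_i(t)} \le C_D/t$ uniformly in $i$ on the relevant balls, choose the comparison radius $r_0 = \sqrt{t}$ (or a fixed small radius) to make $\sqrt{K}\,r_0$ bounded: this yields $\tfrac{d}{dt} d_{g_i(t)}(p_0, q) \geq -C(n, D)/\sqrt t$, which is integrable in $t$, and one gets $d_{g_i(0)}(p_0,q) \leq d_{g_i(t)}(p_0,q) + C\sqrt t \le d_{g_i(t)}(p_0,q) + C'(t+1)$. One must be a little careful because the constant $C_D$ depends on the radius $D$, which a priori depends on $d_{g_i(t)}(p_0,q)$ itself; the standard fix is a bootstrap / continuity-in-$r$ argument (as in Perelman's and Simon's treatments): prove the inclusion for radius $r$ assuming it for slightly smaller radius, so that the relevant ball stays inside a controlled $g_i(t)$-ball, and close the loop. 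This produces, uniformly in $i$, a statement like $B_{g_i(t)}(p_0, r) \subset B_{g_i(0)}(p_0, 2r + L(t+1))$ for $t \in [0,\mc T]$, with $L$ independent of $i$ (the factor $2$ provides slack to run the bootstrap and absorb the $r$-dependence of constants).

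\textbf{Step 3: Passing to the limit.} Since $g_i(t) \to g_\infty(t)$ in $C^\infty_{\rm loc}$ on $M$ (established in the proof of Theorem~\ref{mainT_pos}), distances converge locally uniformly, and the inclusion above passes to the limit: $B_{g_\infty(t)}(p_0, r) \subset B_{g_\infty(0)}(p_0, 2r + L(t+1))$. This is exactly the claimed statement. As a consequence, every $g_\infty(t)$-ball is contained in a $g$-metrically bounded — hence, since $(M,g)$ is complete, relatively compact — set, so $g_\infty(t)$ is complete, which is how the lemma is used back in Theorem~\ref{mainT_pos}.

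\textbf{The main obstacle} I anticipate is not the distance-distortion computation itself (which is by now routine) but the uniformity in $i$ together with the circular dependence of the curvature bound $C_D$ on the radius: one needs to set up the continuity/bootstrap argument carefully so that, as $q$ ranges further from $p_0$, the a priori control on $d_{g_i(t)}(p_0,q)$ needed to invoke Proposition~\ref{Int_BC} is furnished by the very inclusion being proved at a slightly smaller scale. A secondary technical point is justifying Perelman's distance-change estimate in the form needed here, since the interior bound degenerates like $1/t$ as $t\to 0^+$; choosing the auxiliary radius to scale with $\sqrt t$ keeps $\sqrt{\scal}\cdot r_0$ bounded and makes the right-hand side integrable, which is what makes the whole scheme work.
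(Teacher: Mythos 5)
There is a genuine gap in Step~2, exactly at the point you flag as ``the main obstacle'': the bootstrap you invoke does not close. Perelman's changing-distance estimate needs a Ricci upper bound on a ball around \emph{each} endpoint of the minimizing geodesic, so for a point $q$ with $d_{g_i(t)}(p_0,q)\approx r$ you must invoke Proposition~\ref{Int_BC} with $D\approx r$. That proposition is proved by a compactness/contradiction argument and gives no quantitative control on how $C_D$ grows with $D$. Integrating your differential inequality therefore yields
$d_{g_i(0)}(p_0,q)\le d_{g_i(t)}(p_0,q)+c(n)\sqrt{C_{D}\,t}$ with $D\approx d(p_0,q)$, i.e.\ an \emph{additive} loss whose dependence on $r$ is completely uncontrolled (if $C_D$ grows faster than quadratically in $D$, the loss exceeds any linear function of $r$). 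The factor $2$ in the target inclusion can absorb a multiplicative loss, but not an additive term of unknown growth; and an induction on $r$ does not help, since passing from radius $r-\delta$ to $r$ still requires controlling the distance distortion between two points that are \emph{both} far from $p_0$, where no uniform curvature bound is available. Replacing the $C_D/t$ bound by the time-uniform bound $\hat L_j$ on $C_j$ from the proof of Theorem~\ref{mainT_pos} runs into the same problem, since $\hat L_j$ also depends on $j$ in an uncontrolled way.

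The paper circumvents precisely this difficulty by never estimating the distance between $p_0$ and a far-away \emph{point}. Since $(M_i,g_i)$ is a double, it carries a $\Z_2$-symmetry preserved by the (unique) Ricci flow, so the middle $N_i$ stays a totally geodesic hypersurface; one then bounds $\tfrac{d}{dt}d_{g_i(t)}(p_0,N_i)\ge -L_2$ via $-\int_\gamma\Ric(\dot\gamma,\dot\gamma)$, splitting the integral as $\int_0^1+\int_1^{r_i(t)}$: the first piece is controlled by a Ricci bound on the \emph{unit} ball around $p_0$ only, and the second by the second variation formula for the free-endpoint problem on $N_i$ (no curvature bound at the far end is needed because $N_i$ is totally geodesic). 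This traps $B_{g_i(t)}(p_0,r)$ inside a sublevel set $b^{-1}((-\infty,L_3+r+L_2t])$ of the Busemann function, and the linear comparison $2r+L(t+1)$ then comes from Lemma~\ref{Drees} ($b$ is asymptotic to $d_{p_0}$), not from a bootstrap. Your overall architecture (uniform estimate on the $(M_i,g_i(t))$, then pass to the limit) matches the paper, but the core distance estimate needs to be replaced by an argument of this kind.
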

\begin{proof} This will follow by proving a uniform estimate for 
$(M_i,g_i,p_0)$.
Since $(M_i,g_i)$ is the double of a convex set, it has a natural $\Z_2$-symmetry which comes 
from switching the two copies of the double. 
As the Ricci flow on closed manifolds is unique, this symmetry 
is preserved by the Ricci flow. 
Thus the middle of $(M_i,g_i(t))$, being the fixed point set of an isometry,
 remains a totally geodesic 
hypersurface $N_i$. 
It is now fairly easy to estimate how the distance of $p_0$ 
to $N_i$ changes in time: 
Let $L_1$ be a bound on the eigenvalues of the Ricci 
curvature on $B_{g_i(t)}(p_0,1)$ for all $i$ and all $t\in [0,\mc T]$. 

If $c(s)$ is a minimal geodesic in $(M_i,g_i(t))$ 
from $p_0$ to $N_i$ then it follows 
for the left derivative of $r_i(t)=d_{g_i(t)}(p_0,N_i)$ that 
\begin{eqnarray*}
\tfrac{d}{dt} r_i(t)&\ge&- \int_{0}^{r_i(t)}\Ric_{g_i(t)}(\dc(s),\dc(s))\, ds \ge -L_1-\int_{1}^{r_i(t)}\Ric_{g_i(t)}(\dc(s),\dc(s))\, ds\\
&\ge& -L_1-(n-1)=-L_2,
\end{eqnarray*}
where we used the second variation formula in the last inequality.

If we put $D_i=d_{g_i(0)}(p_0,N_i)$ then we obtain 
$d_{g_i(t)}(p_0,N_i)\ge D_i-L_2 t$. Recalling that $d_{g_i(0)}(p,N_i) \geq d_{g_i(t)}(p,N_i)$, for any $r > 0$ we can find $i$ big enough so that
\begin{eqnarray*}
 B_{g_i(t)}(p_0,r)
&\subset& \{p\in C_i\cap M_i \mid d_{g_i(0)}(p,N_i)\ge D_i-L_2t-r\}.
\end{eqnarray*}
Next recall that $\beta$ is essentially a smoothing of a Busemann function 
and by assumption the level set  $\beta^{-1}(i)$ has 
Hausdorff distance at most $1$ to $b^{-1}(\ell_i)$ for a suitable 
$\ell_i$. 
Combining this with the previous inclusion and that we modified the metric 
in $C_i$ in a controlled way we deduce
\begin{eqnarray*}
B_{g_i(t)}(p_0,r)&\subset& \bigl\{p\in  b^{-1}((-\infty,\ell_i]) \mid 
d_{g_i(0)}(p,b^{-1}(\ell_i))\ge \ell_i-L_2t-r-L_3\bigr\}\\
&=&b^{-1}\bigl((-\infty, L_3+r+L_2t]\bigr)
\end{eqnarray*}
where $L_3=3+\diam_{g}( b^{-1}((-\infty,0]))$.
Finally, applying Lemma~\ref{Drees} 
gives
\[
b^{-1}\bigl((-\infty, L_3+r+L_2t]\bigr)\subset B_{g(0)}(p_0, 2r+L(1+t))
\] 
for a suitable large $L$.
\end{proof}

For some of the applications we will need a version 
of Lemma~\ref{lem: complete1} for abstract solutions 
of the Ricci flow with  $K^\C_{g(t)}\ge 0$.
\begin{lem}\label{lem: complete2}
 Let $(M,g(t))_{t\in [0,\mc T]}$ be a solution of the Ricci flow 
with $K^\C_{g(t)}\ge 0$. Suppose that 
$(M,g(t))$ is complete for $t\in [0, \mc T)$. 
If $p_0\in M$, then for some $C>0$
\[
 B_{g(t)}(p_0,R)\subset B_{g(0)}(p,R+C(1+t))\mbox{ for all $R\ge 0$, $t\in [0,\mc T]$} 
\]
In particular, $g(\mc T)$ is complete as well.
\end{lem}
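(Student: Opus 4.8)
The plan is to adapt the argument of Lemma~\ref{lem: complete1} to the abstract setting, the essential point being that $K^\C_{g(t)} \ge 0$ (hence $\Ric_{g(t)} \ge 0$) gives enough control on how distances evolve. Since we no longer have a totally geodesic hypersurface $N_i$ to measure against, the natural substitute is to compare $d_{g(t)}(p_0, q)$ for a fixed point $q$ far from $p_0$, or more simply to estimate directly the evolution of $r(t) = d_{g(t)}(p_0, q)$ from below. First I would fix $q \in M$ and let $c \colon [0, r(t)] \to M$ be a unit-speed minimal $g(t)$-geodesic from $p_0$ to $q$; by the standard first-variation-of-arclength computation under Ricci flow (valid for $t$ where the metric is complete, using a support-sense/barrier argument at points where $c$ is not unique),
\[
\tfrac{d}{dt} d_{g(t)}(p_0, q) \ge -\int_0^{r(t)} \Ric_{g(t)}(\dc(s), \dc(s))\, ds.
\]

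The key estimate is then to bound $\int_0^{r(t)} \Ric_{g(t)}(\dc,\dc)\,ds$ from above by a constant independent of $q$ and of $r(t)$. This is exactly the place where $K_{g(t)} \ge 0$ enters through the second variation formula: along a minimal geodesic $c$ in a nonnegatively curved manifold one has $\int_0^{r(t)} \Ric_{g(t)}(\dc,\dc)\, ds \le (n-1)(\text{something local})$; more precisely, splitting the integral as $\int_0^1 + \int_1^{r(t)}$, the first piece is bounded by $L_1 := \sup\{\|\Ric_{g(t)}\| \text{ on } B_{g(t)}(p_0,1) : t \in [0,\mc T]\}$ (finite by continuity and compactness of $[0,\mc T]$, since the flow is smooth on $[0,\mc T]$ — note $g(\mc T)$ is smooth even if a priori possibly incomplete), and the second piece is bounded by $n-1$ by the second variation argument exactly as in the proof of Lemma~\ref{lem: complete1} (a minimal geodesic of length $\ge 1$ in a $K\ge 0$ manifold contributes at most $(n-1)$ to the index form comparison). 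Hence $\tfrac{d}{dt} d_{g(t)}(p_0,q) \ge -L_2$ for $L_2 := L_1 + n - 1$, which after integrating gives $d_{g(t)}(p_0,q) \ge d_{g(0)}(p_0,q) - L_2 t$.

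From this pointwise-in-$q$ inequality I would extract the ball inclusion: if $q \in B_{g(t)}(p_0, R)$, then $d_{g(0)}(p_0, q) \le d_{g(t)}(p_0,q) + L_2 t \le R + L_2 t \le R + C(1+t)$ with $C = L_2$, which is precisely $B_{g(t)}(p_0,R) \subset B_{g(0)}(p_0, R + C(1+t))$. (One small subtlety: the first-variation inequality needs $g(t)$ complete at time $t$ to guarantee a minimal geodesic exists and that the barrier argument runs; we have completeness on $[0,\mc T)$ by hypothesis, so the differential inequality holds on $[0,\mc T)$ and then extends to $t = \mc T$ by continuity of $d_{g(t)}$ in $t$ on the smooth family, or by a limiting argument $t \nearrow \mc T$.) Finally, completeness of $g(\mc T)$ follows: a $g(\mc T)$-Cauchy sequence, or an unbounded-length divergent curve, would be trapped inside $B_{g(\mc T)}(p_0, R)$ for some $R$, hence inside the $g(0)$-closed-and-complete ball $B_{g(0)}(p_0, R + C(1+\mc T))$, which is $g(0)$-precompact; but the identity map is continuous so this ball is $g(\mc T)$-precompact too, giving a convergent subsequence — contradiction with divergence. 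Thus $g(\mc T)$ is complete.

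The main obstacle I expect is the justification of the first-variation differential inequality $\tfrac{d}{dt} d_{g(t)}(p_0,q) \ge -\int \Ric(\dc,\dc)$ at points where the distance function is merely Lipschitz (non-unique minimizers, or the cut locus), which requires a barrier/support-function argument rather than a naive derivative; this is standard (it appears implicitly in the proof of Lemma~\ref{lem: complete1}) but must be invoked carefully, together with the fact that the bound $L_1$ on $\Ric$ over the unit ball around $p_0$ is uniform in $t \in [0,\mc T]$ because the Ricci flow is smooth on the closed interval. The rest — the second variation estimate and the precompactness argument for completeness — is routine and mirrors Lemma~\ref{lem: complete1} almost verbatim.
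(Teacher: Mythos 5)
Your overall strategy---reduce the ball inclusion to a lower bound on $\tfrac{d}{dt}d_{g(t)}(p_0,q)$ via first variation and then bound $\int_c \Ric(\dc,\dc)\,ds$ by second variation---is the naive approach that the paper's proof is specifically designed to circumvent, and it has a genuine gap at the decisive step. You claim that $\int_1^{r(t)}\Ric(\dc,\dc)\,ds\le n-1$ follows ``exactly as in the proof of Lemma~\ref{lem: complete1}''. But in that lemma the far endpoint of $c$ lies on the \emph{totally geodesic} hypersurface $N_i$, so the test fields $\phi E_i$ with $\phi(s)=\min\{s,1\}$ are admissible for the free-boundary index form (no second fundamental form term, and $\phi$ need not vanish at the far end); that is exactly what produces the clean constant $n-1$. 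For a minimal geodesic between two \emph{points} $p_0$ and $q$, the index form is nonnegative only for fields vanishing at both ends, so $\phi$ must be cut off near $q$: over a terminal segment of length $\delta$ this costs $(n-1)/\delta$ and, worse, leaves the term $\int_{L-\delta}^{L}(1-\phi^2)\Ric(\dc,\dc)\,ds$ completely uncontrolled, because there is no curvature bound near $q$ --- the whole point of this paper is that $(M,g(t))$ may have unbounded curvature. So the uniform differential inequality $\tfrac{d}{dt}d_{g(t)}(p_0,q)\ge -L_2$ with $L_2$ independent of $q$ is not established (and one should not expect it: near a rescaled cigar region the distance to a point $q$ sitting behind a high-curvature neck can decrease at a rate comparable to the curvature scale there).

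This is precisely why the paper does not estimate $d_{g(t)}(p_0,q)$ directly but instead introduces $b_t(q)=\limsup_{p\to\infty}\bigl(d_{g(t)}(p,p_0)-d_{g(t)}(p,q)\bigr)$. Writing $b_t(q)$ as a limit over points $q_k\to\infty$, an angle estimate (via Lemma~\ref{Drees} and Toponogov) allows one to replace $q_k$ by $\tq_k=c_k(1)$, an \emph{interior} point of a minimal geodesic ending at $p_0$; the second-variation cutoff at the far end can then be placed on the minimal extension of $c_k$ beyond $\tq_k$, where $\Ric\ge 0$ lets one simply discard the corresponding piece of the integral, while the cutoff at the $p_0$-end is handled by the (rescaled) curvature bound on $B_{g(t)}(p_0,1)$. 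Together with the monotonicity of $d_{g(t)}(\tq_k,q)$ this gives $\tfrac{d}{dt}b_t(q)\ge -4(n-1)$ uniformly in $q$, and the ball inclusion follows since $b_t$ is comparable to $d_{g(t)}(p_0,\cdot)$ by Lemma~\ref{Drees}. A secondary, fixable point in your write-up: the finiteness of your $L_1$ requires noting that all balls $B_{g(t)}(p_0,1)$ are contained in $B_{g(\mc T)}(p_0,1)$ (distances are non-increasing) and that after rescaling this last ball has compact closure even though $g(\mc T)$ is not yet known to be complete; ``smoothness on $[0,\mc T]$ plus compactness of the time interval'' alone does not suffice. Your concluding deduction of completeness of $g(\mc T)$ from the ball inclusion is fine.
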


\begin{proof}
There is nothing to prove in the compact case and thus 
we may assume that $(M,g(0))$ is open. 
After rescaling we may assume that the closure of $B_{g(\mc T)}(p_0,1)$ 
is compact and that $K_{g(t)}\le 1$ on $B_{g(t)}(p_0,1)$. 

We define $b_t\colon M\rightarrow \R$ 
by $b_t(q):=\limsup_{p\to\infty}\bigl(d_{g(t)}(p,p_0)-d_{g(t)}(p,q)\bigr)$. 
Notice that similarly to the Busemann function 
in subsection~\ref{subsec: chgr} $b_t$ is convex, proper and bounded below. 
In the sequel $\tfrac{d}{dt}$ refers to 
a right hand side Dini derivative. Similar to the proof of the previous lemma, it suffices to show\\[1ex]
{\bf Claim.} $ \tfrac{d}{dt}b_t(q)\ge - 4(n-1)$ for all $q\in M$.

Choose $q_k\to \infty$ with 
$b_t(q)=\lim_{k\to \infty}d_{g(t)}(q_k,p_0)-d_{g(t)}(q_k,q)$.
We may assume that the Busemann function $b_t$ is differentiable 
at $q_k$. 
Let $c_k$ (resp. $\gamma_k$) be a unit speed geodesic from 
$q_k$ to $p_0$ (resp. to $q$). 
We claim that the angle between 
$\dot{c}_k(0)$ and $\dot{\gamma}_k(0)$ converges to $0$.

Notice  that $b_t\ge \tilde b_t$, where 
$\tilde b_t$ is the Busemann function of $(M,g(t))$ defined with
respect to all rays emanating from $p_0$.
Therefore we deduce from Lemma~\ref{Drees} 
that $b_t(q_k)\ge (1-\delta_k)d(p_0,q_k)$
for some  sequence 
$\delta_k\to 0$.
After possibly adjusting $\delta_k$ we also may assume 
that $b_t(q_k)-b_t(q)\ge  (1-\delta_k)d(q,q_k)$. 
Since $s\mapsto b_t(c_k(s))$ and $s\mapsto b_t(\gamma_k(s))$ 
are convex $1$-Lipschitz functions, 
we get that $\ml \nabla b_t(q_k),\dc_k(0)\mr \le -(1-\delta_k)$ 
and $\ml \nabla b_t(q_k),\dgamma_k(0)\mr \le -(1-\delta_k)$.
Thus $\ml \dc_k(0),\dgamma_k(0)\mr\to 1$ as claimed.

By Toponogov triangle comparison theorem $d(c_k(1), \gamma_k(1))\to 0$.
If we put $\tq_k:=c_k(1)$, then 
$b_t(q)=\lim_{k\to \infty}d_{g(t)}(\tq_k,p_0)-d_{g(t)}(\tq_k,q)$.

We can now use the second variation formula combined 
with the curvature bounds on $B_{g(t)}(p_0,1)$ to see 
that $\tfrac{d}{dt}d_{g(t)}(\tq_k,p_0)\ge -4(n-1)$. 
Since $d_{g(t)}(\tq_k,q)$ is decreasing in $t$, we deduce
$\tfrac{d}{dt}\lim_{k\to \infty}d_{g(t)}(\tq_k,p_0)-d_{g(t)}(\tq_k,q)\ge -4(n-1)$. 
This in turn implies that the right derivative of $b_t(q)$ is bounded below 
by $-4(n-1)$ as claimed.
\end{proof}

\section{Miscellanea of auxiliary results for the general case}\lb{sec:misc}

\subsection{A splitting theorem for open manifolds with $K^{\ce} \geq 0$}

\bt
Let $(M^n, g)$ be an open, simply connected Riemannian manifold with $K^\ce_g \geq 0$. Then $M$ splits isometrically as  $\Sigma \times F$, where $\Sigma$ is the $k$-dimensional soul of $M$ and $F$ is diffeomorphic to $\re^{n - k}$. In particular, $F$ carries a complete metric of nonnegative complex sectional curvature.
\et

\bdem
By Theorem \ref{Strake-thm} due to M.~Strake, it is enough to show that the normal holonomy group of the soul $\Sigma$ is trivial, which in turn is equivalent (by a modification of \cite[Section 8.4]{Peters}) to prove
$$\<\Rm(e_1, e_2) v_1, v_2\> = 0 \quad \text{ for all } \quad e_1, e_2 \in T_p \Sigma \quad \text{ and all } \quad v_1, v_2 \in \nu_p \Sigma.$$

With such a goal, we look at the curvature tensor on the 4-dimensional space $N = {\rm span}\{e_1, e_2, v_1, v_2\}$, namely, we consider $\tilde \Rm = \Rm|_{\Lambda^2 N}$. It is well-known that one has the orthogonal decomposition $\Lambda^2 N = \Lambda^2_+ \oplus \Lambda^2_-$ into the eigenspaces of the Hodge star operator $\ast$ with eigenvalues $\pm 1$. This gives a block decomposition 
$$\tilde \Rm = \(\ba{cc} A &  B \\
^{t} B & C
\ea\)$$ 
with respect to the bases\begin{align*}
& \{b_1^\pm  = e_1\wedge e_2 \pm v_1\wedge v_2,\ b_2^\pm = e_1\wedge v_1 \mp e_2\wedge v_2, \ b_3^\pm = e_1 \wedge v_2 \pm e_2 \wedge v_1\} 
\end{align*}
for $\Lambda^2_\pm$.
By Cheeger and Gromoll  \cite[Theorem 3.1]{ChGr}
the mixed curvatures vanish, i.e. 
$\tilde \Rm(e_i\wedge v_j,e_i\wedge v_j)=0$. Thus
\begin{align*}
a_{22} + a_{33}+c_{22}+c_{33} & = 0.
\end{align*}

On the other hand, it is well 
known (cf.~\cite{Ham4pic}) that nonnegative isotropic curvature 
of $\tilde \Rm$ implies that 
the numbers $a_{22}+a_{33}$ and $c_{22}+c_{33}$ are nonnegative. 
Consequently,
\bec \lb{ACg0}
a_{22} + a_{33} = 0 = c_{22} + c_{33}.
\eec

Since $\tilde \Rm$ is a $4$-dimensional curvature 
operator with nonnegative sectional curvature, 
a result by Thorpe (see e.g.~\cite[Proposition 3.2]{Put}) ensures that 
we can find
a $\lambda \in \re$ such that 
$$\tilde \Rm+\lambda\(\ba{cc} I &  0 \\
 0 & -I
\ea\) = \(\ba{cc} A+\lambda I &  B \\
^{t} B & C-\lambda I
\ea\)$$ 
is a positive semidefinite matrix.
Combining this with $\eqref{ACg0}$, we obtain  $\lambda=0$, so
$\tilde \Rm$ itself is a nonnegative operator. Then 
$v_i\wedge e_j$ are in the kernel of $\tilde \Rm$.
Finally, the first Bianchi identity yields
$$\<\Rm(e_1 \wedge e_2), v_1\wedge v_2\> = - \<\Rm(v_1 \wedge e_1), e_2\wedge v_2\> - \<\Rm(e_2 \wedge v_1), e_1\wedge v_2\> = 0.$$
\edem

\subsection{Some preliminary estimates for Riccati operators}

In the space of self-adjoint endomorphisms $S(\re^n)$,  $A \leq B$ if $\<A v, v\> \leq \<B v, v\>$ for every $v \in \re^n$.

\begin{lem}\lb{Ric_comp_cons}
Let $A(s) \in S(\re^n)$ be a nonnegative solution of the Riccati equation
\bec \lb{Ric_abs}
A'(s) + A^2(s) + R(s) = 0, 
\eec
with $R(s) \geq 0$.  Assume also that $R$ and $|R'|$ are bounded for $s\in [0,1]$ by constants $C_R$ and $C_{R'} > 0$, respectively. Then there exists $A_0 \in S(\re^n)$ satisfying 
\bec \lb{A0_bdds}
A(0) \geq A_0, \qquad A_0 \leq C_R
\eec
and we can find an $\eps_0 = \eps_0(C_R, C_{R'}) > 0$ so that
\bec \lb{conv_ref}
\<A_0 w, w\>   \geq \eps_0 \<R(0) w, w\>^2 \qquad \text{for all} \quad w\in \re^n \quad \text{with} \quad |w|=1.
\eec
\end{lem}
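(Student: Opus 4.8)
The plan is to take
\[
A_0:=\int_0^1 R(s)\,ds\,,
\]
and to check the three requirements directly. For the first two: since each $A(s)$ is self-adjoint we have $A^2(s)\ge 0$, so $\eqref{Ric_abs}$ gives the operator inequality $A'(s)=-A^2(s)-R(s)\le -R(s)$ on $[0,1]$; integrating over $[0,1]$ and using $A(1)\ge 0$ yields $A(0)\ge A(1)+\int_0^1 R(s)\,ds\ge A_0$, which is the first part of $\eqref{A0_bdds}$. The second part, $A_0\le C_R$, is immediate from $R(s)\le C_R\,\mathrm{Id}$ for $s\in[0,1]$, since then $\bigl\|\int_0^1 R(s)\,ds\bigr\|\le C_R$.

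The real content is the estimate $\eqref{conv_ref}$, and this is where the Lipschitz control on $R$ enters. I would fix a unit vector $w$ and pass to the scalar function $r(s):=\langle R(s)w,w\rangle$, which satisfies $r(s)\ge 0$ (because $R(s)\ge 0$), is $C_{R'}$-Lipschitz (because $\lvert R'(s)\rvert\le C_{R'}$), and has $r(0)\le C_R$. Hence $r(s)\ge\max\{0,\,r(0)-C_{R'}s\}$ on $[0,1]$, so
\[
\langle A_0 w,w\rangle=\int_0^1 r(s)\,ds\ \ge\ \int_0^1\max\{0,\,r(0)-C_{R'}s\}\,ds\,.
\]
Then I would bound the right-hand integral from below in two regimes: if $r(0)\le C_{R'}$ the integrand is positive exactly on $[0,r(0)/C_{R'}]$ and the integral equals $r(0)^2/(2C_{R'})$; if $r(0)>C_{R'}$ the integral equals $r(0)-C_{R'}/2>r(0)/2$, which is $\ge r(0)^2/(2C_R)$ because $r(0)\le C_R$. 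Setting $\eps_0:=\bigl(2\max\{C_R,C_{R'}\}\bigr)^{-1}$, which depends only on $C_R$ and $C_{R'}$, one gets $\langle A_0 w,w\rangle\ge\eps_0\,r(0)^2=\eps_0\langle R(0)w,w\rangle^2$ for every unit $w$, i.e. $\eqref{conv_ref}$.

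No step here is genuinely hard; the only slightly delicate point—which I would treat as the crux—is matching the two regimes of $r(0)$ so that a single $\eps_0$, free of the dimension and of $R$ itself, works throughout: for small $r(0)$ the honest lower bound for $\int_0^1 r$ is quadratic in $r(0)$ with constant $\sim 1/C_{R'}$, while for $r(0)$ of size comparable to $C_R$ one obtains only a linear bound and must spend the a priori bound $r(0)\le C_R$ to convert it into the quadratic form demanded by $\eqref{conv_ref}$. Everything else—deriving $A'\le -R$ from the Riccati equation and the explicit integration of the truncated linear function—is routine.
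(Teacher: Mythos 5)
Your proof is correct and follows essentially the same route as the paper: the same choice $A_0=\int_0^1 R$, the same integration of $A'\le -R$, and the same lower bound $\int_0^1\max\{0,r(0)-\text{(Lip const)}\,s\}\,ds$ for $\langle A_0w,w\rangle$. The only (cosmetic) difference is that the paper inflates the Lipschitz constant to $C=\max\{C_R,C_{R'}\}$ at the outset, so that $r(0)/C\le 1$ and a single computation suffices, whereas you keep $C_{R'}$ and handle the case $r(0)>C_{R'}$ separately; both yield the same $\eps_0$.
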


\bdem
From \eqref{Ric_abs}, we can write
$$A(s) - A(0) = - \int_0^s \(A^2(\xi) + R(\xi)\)\, d \xi \leq  - \int_0^s  R(\xi)\, d \xi.$$
As this is valid for any $s$, using $A(s) \geq 0$ and $R(s) \geq 0$, we conclude
\begin{align*}
A(0) \geq \int_0^\infty R(\xi) \, d\xi \geq \int_0^1 R(\xi) \, d\xi =:A_0.
\end{align*} 
Clearly, $A_0 \leq C_R$. Next, take $w\in \re^n$ with $|w| = 1$, define $C:=\max\{C_R, C_{R'}\}$ and compute
$$\<R(s) w, w\> = \<\(R(0) + s R'(\xi)\)w, w\> \geq r(0) - C\, s,$$
where $r(0) = \<R(0) w, w\>$ and $r(0)/C \leq C_{R}/C \leq 1$. This allows to estimate
\begin{align*}
\<A_0 w, w\> & =\int_0^1 \<R(\xi) w, w\> d\xi \geq \int_0^1 \max\{0, r(0) - C\xi\} \, d \xi 
\\ & \geq \int_0^{r(0)/C} \(r(0) - C s\) \, ds = \frac{r(0)^2}{2 C},
\end{align*}
which gives the result taking $\eps_0 = \tfrac{1}{2 C}$.
\edem

\begin{lem}\lb{Ric_comp}
Let $A(s) \in S(\re^n)$ be a solution of  \eqref{Ric_abs}.  Suppose that $|R(s)| \leq C_R$ and $|R'(s)| \leq C_{R'}$ for small $s$. If there exists $\eps_0 > 0$ and $A_0\in S(\re^n)$ satisfying \eqref{A0_bdds} and \eqref{conv_ref}, then we can find an $s_0 = s_0(C_R)> 0$ such that 
\bec \lb{inA_bel}
A(s) \geq - C s^2 \,{\rm Id} \qquad \text{for all} \qquad s \in (0, s_0],
\eec
for some $C= C(C_R, C_{R'}) > 0$.
\end{lem}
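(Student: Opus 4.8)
\textbf{Proof plan for Lemma \ref{Ric_comp}.}
The idea is to study the evolution of the lowest eigenvalue of $A(s)$ by combining the Riccati equation with the information that $A$ starts out \emph{already} with a quantitatively positive lower bound in the directions where $R(0)$ is large, and can only be mildly negative in the directions where $R(0)$ is small. First I would fix a unit vector $w$ and set $a(s) = \langle A(s)w, w\rangle$; differentiating along the (fixed) direction $w$ is not quite legitimate because $A^2$ couples directions, but one has the honest scalar inequality $a'(s) = -\langle A^2(s)w,w\rangle - \langle R(s)w,w\rangle \ge -\|A(s)\|^2 - C_R$ together with the sharper sign information below. The cleanest route is to first get a crude two-sided a priori bound $\|A(s)\| \le K$ for $s\in(0,s_0]$ with $K = K(C_R)$: from \eqref{A0_bdds} and the Riccati equation written as $A(s) = A(0) - \int_0^s(A^2+R)$, the upper bound $A(s)\le A(0)\le$ (something controlled) holds while $A$ stays nonnegative, and a standard comparison with the scalar ODE $y' = -y^2 - C_R$ (whose solutions starting from a bounded value stay bounded on a definite time interval $[0,s_0(C_R)]$) controls $\|A(s)\|$ from below as well on that interval. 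So on $(0,s_0]$ we may assume $\|A(s)\|\le K$.

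With this bound in hand, the main step is the eigenvalue estimate. Let $w_s$ be a unit eigenvector for the smallest eigenvalue $\mu(s)$ of $A(s)$; by the standard first-variation argument for eigenvalues of a $C^1$ path of symmetric matrices one gets the (one-sided, Dini) inequality $\mu'(s) \ge -\mu(s)^2 - \langle R(s)w_s, w_s\rangle \ge -K^2 - C_R =: -C_1$, which already gives $\mu(s)\ge \mu(0) - C_1 s$, but this is linear in $s$, not quadratic, and $\mu(0)$ could be slightly negative. To upgrade to $-Cs^2$ I would split according to the size of $r_0 := \langle R(0)w_s, w_s\rangle$. If $r_0$ is bounded below by a definite multiple of $s$ (say $r_0 \ge 2Cs$ for the constant $C$ we are trying to produce), then hypothesis \eqref{conv_ref} gives $\langle A_0 w_s, w_s\rangle \ge \eps_0 r_0^2$, hence $\mu(0) = \langle A(0)w_s,w_s\rangle \ge \langle A_0 w_s, w_s\rangle \ge \eps_0 r_0^2 \ge 0$, and the linear-loss inequality $\mu(s)\ge \mu(0) - C_1 s$ is not good enough on its own — but here one instead runs the differential inequality more carefully: as long as $\mu$ stays $\ge 0$ the term $-\mu^2$ costs at most $-K\mu$ (since $\mu\le K$), so $\mu' \ge -K\mu - \langle R(s)w_s,w_s\rangle$, and near $s=0$, $\langle R(s)w_s, w_s\rangle \ge r_0 - C_{R'}s > 0$ actually \emph{helps}, so $\mu$ cannot drop below $0$ for small $s$; in the complementary regime $r_0 \le 2Cs$ one has $\langle R(s)w_s, w_s\rangle \le r_0 + C_{R'}s \le (2C + C_{R'})s$, so $\mu'(s) \ge -\mu(s)^2 - (2C+C_{R'})s$, and starting from $\mu(0)\ge -A_0$-type bound (which by \eqref{A0_bdds} is $\ge$ a bounded negative quantity, in fact $\ge 0$ since $A(0)\ge A_0$ and... one needs $A_0$ may itself be only bounded above, so $\mu(0)$ could be negative up to $-\|A_0\|$) — here the cleanest fix is to observe $A(0)\ge A_0$ forces $\mu(0)\ge \lambda_{\min}(A_0)$, and Lemma \ref{Ric_comp_cons} only guarantees $A_0 \le C_R$, not $A_0 \ge 0$, so in fact $\mu(0)$ could be $O(1)$ negative and a quadratic conclusion at $s=0$ would fail. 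Hence I would re-read \eqref{A0_bdds}: it says $A(0)\ge A_0$ and $A_0 \le C_R$; since $A(s)$ is assumed merely a solution (not necessarily nonnegative), the right reading is that $A_0$ is a \emph{nonnegative} operator too (it arises in Lemma \ref{Ric_comp_cons} as $\int_0^1 R \ge 0$), so $\mu(0)\ge 0$, and then in the regime $r_0\le 2Cs$ integrating $\mu' \ge -\mu^2 - (2C+C_{R'})s$ from $\mu(0)\ge 0$ against the comparison ODE $y' = -y^2 - (2C+C_{R'})s$, $y(0)=0$, whose solution satisfies $y(s)\ge -\tfrac{1}{2}(2C+C_{R'})s^2 - O(s^4)$, yields $\mu(s)\ge -Cs^2$ for $s\le s_0$ once $C$ is chosen $\ge (2C+C_{R'})$ — i.e. one picks $C$ large enough (depending only on $C_R$ through $K$, and on $C_{R'}$) that the bootstrap closes, then shrinks $s_0 = s_0(C_R)$ so the error terms are absorbed.

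Finally I would assemble the two regimes: in either case $\langle A(s)w, w\rangle = \mu(s)$ evaluated at the minimizing $w$ is $\ge -Cs^2$, and since $w$ was the worst direction this gives $A(s)\ge -Cs^2\,\mathrm{Id}$ for all $s\in(0,s_0]$, which is \eqref{inA_bel}. The main obstacle — and the only genuinely delicate point — is the bootstrap constant juggling in the second regime: one must choose $C$ and then $s_0$ in the correct order so that the quadratic comparison ODE actually dominates, while keeping the final $C$ dependent only on $(C_R, C_{R'})$ and $s_0$ only on $C_R$ as the statement demands; everything else (the a priori $\|A\|\le K$ bound, the first-variation inequality for the least eigenvalue, the use of \eqref{conv_ref} in the large-$r_0$ regime) is routine once the Riccati equation and Lemma \ref{Ric_comp_cons} are in place.
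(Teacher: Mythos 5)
There is a genuine gap. The heart of the lemma is that, in a direction $w$ where $\langle R(0)w,w\rangle=r(0)$ is large, the Riccati equation forces a loss that is \emph{linear} in $s$ at rate roughly $r(0)$ (since $A'(0)=-A(0)^2-R(0)$), and the only thing that can absorb this is the hypothesis \eqref{conv_ref}, which makes the initial value \emph{quadratically} large in $r(0)$; the two are reconciled by completing the square, $\tfrac{\eps_0}{2}r(0)^2-s\,r(0)\ge-\tfrac{s^2}{2\eps_0}$, which is exactly what produces a $-Cs^2$ bound uniformly over all directions. Your proposal never performs this step. Worse, in your "first regime" you assert that a positive $\langle R(s)w_s,w_s\rangle$ "helps" keep $\mu$ from dropping; the sign is backwards, since $R$ enters the equation as $A'=-A^2-R$, so a large positive $R$ drives $A$ \emph{down} — this is precisely the loss that must be compensated, not a help. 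In addition, your constant choice in the second regime ("once $C$ is chosen $\ge 2C+C_{R'}$") is literally unsatisfiable, and the regime split is defined through the $s$-dependent minimizing eigenvector and through the target scale $s$ itself, so a single trajectory can wander between regimes and the regime-by-regime integration of the Dini differential inequality is not justified.

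The paper's proof avoids all of this machinery: by Riccati comparison one may assume $A(0)=A_0$ (which is $\ge 0$ automatically from \eqref{conv_ref}); a second-order Taylor expansion gives $A(s)\ge A(0)+sA'(0)-Cs^2\,\mathrm{Id}=(1-sC_R)A_0-sR(0)-Cs^2\,\mathrm{Id}$ after using $A_0^2\le C_R A_0$ from \eqref{A0_bdds}; and then for each unit $w$ and $s\le s_0:=\tfrac1{2C_R}$ one plugs in \eqref{conv_ref} and completes the square as above. If you want to salvage your eigenvalue-ODE framework, you would still need to insert this same quadratic-versus-linear tradeoff at $s=0$; at that point the ODE and the regime split buy you nothing over the direct Taylor expansion.
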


\bdem
Using Riccati comparison (see e.g.~\cite{Esch78}) we can assume without loss of generality that 
$A(0) = A_0$. Next, we do a Taylor expansion for $A$ and use \eqref{Ric_abs} at $s = 0$:
\begin{align*} 
A(s) &\geq A(0) + s A'(0)  - C \, s^2\, {\rm Id} 
 \geq A(0) - s(A(0)^2 + R(0)) -  C \, s^2\, {\rm Id}
 \\ & \geq (1 - s C_R) A_0 - s R(0) - C s^2 \, {\rm Id},  
\end{align*}
which comes from \eqref{A0_bdds}. Notice that $C$ depends on $C_R$ and $C_{R'}$. Choose now any $w\in \re^n$ with $|w|=1$. Then, for $s\leq \tfrac1{2C_R} =: s_0$, our assumption \eqref{conv_ref} yields
\begin{align*}
\<A(s) w, w\> & \geq \frac1{2} \<A_0 w, w\> - s \<R(0) w, w\> - C s^2 \geq \frac1{2} \eps_0 r(0)^2 - s r(0) - C s^2 
\\ & = \(\sqrt{\frac{\eps_0}{2}} r(0) - \frac{s}{\sqrt{2 \eps_0}}\)^2 - \(C + \frac{1}{2 \eps_0}\) s^2 \geq -  \widetilde C(\eps_0, C)\, s^2.
\end{align*}
\edem

\subsection{Curvature estimates in terms of volume}

It will be useful for technical purposes to have in mind the following 
\begin{lem}\lb{BG_ref}
Let $(M^n, g)$ be a Riemannian manifold with $K_g \geq - \Lambda^2\ge -1$. 
Then there is a constant $C$ depending on $n$ such that
$$\frac{\vle_g \(B_g(p, R)\)}{\vle_g \(B_g(p, r)\)} \leq \(\frac{R}{r}\)^n (1 + C (\Lambda R)^2)$$
for all $p\in M$ and all $0 < r \leq R\le 2$.
\end{lem}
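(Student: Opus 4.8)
The plan is to prove this refined Bishop--Gromov inequality by going back to the pointwise volume comparison for the area of geodesic spheres. Let $\Lambda\le 1$ with $K_g\ge -\Lambda^2$, and write $A(p,\rho)$ for the $(n-1)$-dimensional area of the geodesic sphere $\partial B_g(p,\rho)$ (integrating the Jacobian of $\exp_p$ over the unit sphere, restricted to directions before the cut locus). The Rauch/Bishop comparison gives, for the density $j(\rho,\xi)$ of the volume element in polar coordinates, the monotonicity that $\rho\mapsto j(\rho,\xi)/\mathrm{s}_{-\Lambda^2}(\rho)^{n-1}$ is non-increasing, where $\mathrm{s}_{-\Lambda^2}(\rho)=\tfrac{1}{\Lambda}\sinh(\Lambda\rho)$ is the comparison sine function. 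Consequently $A(p,\rho)/\big(\tfrac{1}{\Lambda}\sinh(\Lambda\rho)\big)^{n-1}$ is non-increasing in $\rho$, and integrating in $\rho$ yields that
\[
\frac{\vle_g(B_g(p,R))}{\vle_g(B_g(p,r))}\;\le\;\frac{\int_0^R \sinh(\Lambda\rho)^{n-1}\,d\rho}{\int_0^r \sinh(\Lambda\rho)^{n-1}\,d\rho}
\]
by the standard Bishop--Gromov argument (the quotient of integrals of a non-increasing-ratio pair). So the whole statement reduces to the elementary one-variable estimate
\[
\frac{\int_0^R \sinh(\Lambda\rho)^{n-1}\,d\rho}{\int_0^r \sinh(\Lambda\rho)^{n-1}\,d\rho}\;\le\;\Big(\frac{R}{r}\Big)^n\big(1+C(\Lambda R)^2\big)
\qquad\text{for }0<r\le R\le 2.
\]

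To prove this last inequality I would use the two-sided bound $t\le \sinh t\le t\,e^{t^2/6}\le t\,(1+C_0 t^2)$ valid for $t=\Lambda\rho\le \Lambda R\le 2$ (here $C_0$ is absolute, coming from $e^{s}\le 1+C_0 s$ on a bounded range). Raising to the $(n-1)$st power gives $(\Lambda\rho)^{n-1}\le \sinh(\Lambda\rho)^{n-1}\le (\Lambda\rho)^{n-1}(1+C_1(\Lambda\rho)^2)$ with $C_1=C_1(n)$, again because $(1+x)^{n-1}\le 1+C_1 x$ for $x=C_0(\Lambda\rho)^2$ bounded. Integrating from $0$ to $R$ in the numerator using the upper bound, and from $0$ to $r$ in the denominator using the lower bound, produces
\[
\frac{\int_0^R\sinh(\Lambda\rho)^{n-1}d\rho}{\int_0^r\sinh(\Lambda\rho)^{n-1}d\rho}
\le \frac{\Lambda^{n-1}\big(\tfrac{R^n}{n}+C_1\Lambda^2\tfrac{R^{n+2}}{n+2}\big)}{\Lambda^{n-1}\tfrac{r^n}{n}}
=\Big(\frac{R}{r}\Big)^n\Big(1+\tfrac{nC_1}{n+2}(\Lambda R)^2\Big),
\]
which is the claim with $C=C(n)=nC_1/(n+2)$.

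For the bookkeeping I should be slightly careful that the elementary inequalities $\sinh t\le t(1+C_0t^2)$ and $(1+x)^{n-1}\le 1+C_1x$ are only asserted on the compact range dictated by $\Lambda R\le 2$ and $\Lambda\le 1$ (so $\Lambda\rho\le 2$ throughout), which is exactly the hypothesis $0<r\le R\le 2$ together with $\Lambda\le 1$; this is why the statement restricts to $R\le 2$. I would also note the degenerate/technical point that the comparison for $A(p,\rho)$ only sees directions inside the segment domain, but since the volume of $B_g(p,R)$ is obtained by integrating over that same star-shaped domain (the cut locus has measure zero), the Bishop--Gromov quotient argument goes through unchanged; this is the one place where one must be a little bit attentive rather than purely formal, so I expect the monotonicity-of-the-ratio-of-integrals step, and the verification that the comparison model is $\sinh$ rather than $\sin$ because the curvature bound is from below, to be the main (very mild) obstacle. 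Everything else is a direct computation with $\sinh$.
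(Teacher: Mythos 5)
Your proposal is correct and follows essentially the same route as the paper: reduce via Bishop--Gromov to the ratio of the model integrals $\int_0^R\sinh(\Lambda\rho)^{n-1}d\rho\big/\int_0^r\sinh(\Lambda\rho)^{n-1}d\rho$, then use the elementary two-sided bound $\rho^{m}\le(\sinh\rho)^{m}\le\rho^{m}(1+C\rho^{2})$ on the compact range forced by $\Lambda\le 1$ and $R\le 2$, estimating the numerator from above and the denominator from below. The only difference is that you spell out the Jacobian-monotonicity behind Bishop--Gromov, which the paper simply cites.
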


\bdem
Let us use the Taylor expansion for $\sinh \rho$, with $0 \leq \rho \leq 2$, to deduce
\begin{align*}
\rho^m \leq (\sinh \rho)^m = \bigg(\sum_{j = 0}^\infty \frac{\rho^{2 j + 1}}{(2 j + 1)!}\bigg)^m & \leq \rho^m (1 + \gorro C \rho^2)^m = \rho^m \sum_{j = 0}^m \bigg(\ba{c} \!\! m \\ \!j \ea\!\!\bigg) (\gorro C \rho^2)^j
\\ & \leq \rho^m (1 + C \rho^2).
\end{align*}

Now Bishop-Gromov's comparison theorem says that
\begin{align*}
\frac{\vle_g \(B_g(p, R)\)}{\vle_g \(B_g(p, r)\)} & \leq \ds \frac{\ds \int_0^R [\sinh(\Lambda \rho)]^{n -1} \, d \rho}{\ds\int_0^r [\sinh(\Lambda \rho)]^{n -1} \, d \rho} \leq \frac{\ds \int_0^R  (\Lambda \rho)^{n -1} (1 + C(\Lambda \rho)^2) \, d \rho}{\ds \int_0^r (\Lambda \rho)^{n -1} \, d \rho}
\\ & \leq \frac{(\Lambda R)^n/(\Lambda n) + C (\Lambda R)^{n + 2}/(\Lambda(n + 2))}{(\Lambda r)^n/(\Lambda n)} \leq \frac{R^n}{r^n} (1 + C (\Lambda R)^2).
\end{align*}
\edem

\begin{prop}\lb{curv_est_NN}
For any $\eps > 0$ we can find positive constants $\delta, \kappa, T$ such that if $(M^n, g(t))$ is a compact Ricci flow with 
\begin{eqnarray*}
K_{g(t)} \geq -\kappa \quad \text{ on } \, [0, \bar t] \qquad \mbox{ and } \qquad \frac{\vle_{g(0)} B_{g(0)}(\cdot, r)}{r^n} \geq (1 - \delta) \omega_n, 
\end{eqnarray*}
for some $r\in (0,1]$, then 
\bec \lb{ce_aevg}
|\Rm|_{g(t)} \leq \frac{\eps}{t} \qquad \text{on} \quad [0, \bar t] \cap [0, r^2 T(\eps)].
\eec
\end{prop}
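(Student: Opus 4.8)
The plan is to run a contradiction-and-blow-up argument of the same flavour as in Proposition~\ref{Int_BC}, but now using the almost-Euclidean volume hypothesis to force the blow-up limit to be flat $\R^n$. So suppose the claim fails for some fixed $\eps>0$. Then for every choice of $\delta=\kappa=1/k$ and $T=k$ there is a compact Ricci flow $(M_k,g_k(t))$ with $K_{g_k(t)}\ge -1/k$ on $[0,\bar t_k]$ and a point $x_k$ with $\tfrac{\vle_{g_k(0)}(B_{g_k(0)}(x_k,r_k))}{r_k^n}\ge (1-1/k)\omega_n$ for some $r_k\in(0,1]$, yet $|\Rm|_{g_k(t_k)}(p_k)>\eps/t_k$ for some $t_k\in(0,\bar t_k]\cap(0,r_k^2 k]$. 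After the parabolic rescaling $\tilde g_k=r_k^{-2}g_k(r_k^2\,\cdot\,)$ we may assume $r_k=1$; the curvature lower bound degrades only by $r_k^2\le 1$, so we still have $K_{\tilde g_k(t)}\ge -1/k\to 0$, the volume ratio at scale $1$ is still $\ge(1-1/k)\omega_n$, and the flow exists on $[0,\bar t_k k]$ with $\bar t_k k\to\infty$. Here I would invoke Lemma~\ref{BG_ref}: the almost-maximal volume ratio at radius $1$ propagates, with an error $O(1/k)$ coming from the curvature lower bound, to \emph{all} radii $r\in(0,1]$, so that $\tfrac{\vle_{\tilde g_k(0)}(B_{\tilde g_k(0)}(x_k,r))}{r^n}\ge(1-\eta_k)\omega_n$ with $\eta_k\to 0$, uniformly in $r\le 1$.

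Next I would rescale a second time to contradict the conclusion at a scale where the curvature is worst. This is the point-picking step, identical in structure to Claim~1 in the proof of Proposition~\ref{Int_BC}: using that $(M_k,\tilde g_k(t))$ has $K\ge -o(1)$ (hence is, after a harmless rescaling, a bona fide smooth flow with curvature bounded below) and that $t\,\scal_{\tilde g_k(t)}$ fails to be $\le$ const somewhere, one selects $\bar p_k$, $\bar t_k$ with $Q_k:=|\Rm|_{\tilde g_k(\bar t_k)}(\bar p_k)\to\infty$ such that $|\Rm|\le 4Q_k$ on a parabolic neighbourhood of $(\bar p_k,\bar t_k)$ of size growing in the rescaled metric $\hat g_k=Q_k\tilde g_k(\bar t_k+Q_k^{-1}\cdot)$. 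One has to be slightly careful that the scale $t_k$ where the contradiction hypothesis holds is $\le r_k^2 T=k\cdot r_k^2\to\infty$ in rescaled time, so that there is room to the left to run the point-picking; this is exactly why the hypothesis is stated as $[0,\bar t]\cap[0,r^2 T]$ rather than all of $[0,\bar t]$. In the rescaled picture $\hat g_k$ the volume ratio at radius $R$ is $\ge(1-\eta_k)\omega_n$ for all $R\le \sqrt{Q_k}\to\infty$; together with the two-sided curvature bound this gives via Theorem~\ref{ChGrT} (injectivity radius from volume and curvature) a uniform lower bound on the injectivity radius at $\bar p_k$.

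Now Hamilton's compactness (Theorem~\ref{locHCT}) applies, and after passing to a subsequence $(M_k,\hat g_k(s),\bar p_k)\to(M_\infty,g_\infty(s),p_\infty)$ smoothly in the Cheeger--Gromov sense, a complete ancient Ricci flow on $s\in(-\infty,0]$ with bounded curvature, with $\scal_{g_\infty(0)}(p_\infty)=1$ (so non-flat), with $K_{g_\infty(s)}\ge 0$ (the $-1/k$ lower bound passes to $0$ in the limit), and with $\vle_{g_\infty(0)}(B_{g_\infty(0)}(p_\infty,R))=\omega_n R^n$ for \emph{all} $R>0$, since each radius-$R$ ball inherits the almost-maximal volume ratio $(1-\eta_k)\omega_n$ with $\eta_k\to 0$. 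But a complete Riemannian manifold with $K\ge 0$ whose balls have exactly Euclidean volume is isometric to $\R^n$ (equality in Bishop--Gromov), contradicting non-flatness. Taking $T(\eps)$ to be the resulting (implicitly determined) threshold and tracing the rescalings back gives \eqref{ce_aevg}.

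\textbf{Main obstacle.} The delicate point is not the blow-up mechanics, which mirror Proposition~\ref{Int_BC}, but making the volume almost-rigidity survive \emph{both} rescalings and the limit: one must check that the error $\eta_k$ in the Bishop--Gromov near-equality, produced by Lemma~\ref{BG_ref} from a curvature lower bound that is itself only $-1/k$, stays $\to 0$ after dividing the metric by the (possibly large) factor $Q_k$ — which it does precisely because rescaling by $Q_k\ge 1$ only improves the lower curvature bound and enlarges the range of radii on which the near-equality holds. A secondary technical nuisance is that the contradiction hypothesis is only available on $[0,\bar t]\cap[0,r^2 T]$, so one must ensure the point-picking selects $\bar t_k$ inside this window and that enough past time remains to extract an ancient limit; this is handled exactly as in the standard local curvature-estimate arguments by letting $T=T(\eps)\to\infty$ along the contradicting sequence.
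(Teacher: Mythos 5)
Your overall architecture (contradiction, parabolic blow-up, Bishop--Gromov rigidity forcing a flat limit against $\scal_{g_\infty(0)}(p_\infty)=1$) is the same as the paper's, and the final rigidity contradiction you invoke is exactly the one used there. But there is a genuine gap in the middle: you only ever establish the almost-Euclidean volume ratio \emph{at time $0$}, whereas your blow-up limit is taken at the later time $\bar t_k$, so the assertion ``in the rescaled picture $\hat g_k$ the volume ratio at radius $R$ is $\ge(1-\eta_k)\omega_n$'' is unproved. A lower volume bound does not propagate forward in time for free (Bishop--Gromov only gives upper bounds), and this propagation is precisely the substantive step of the paper's proof: one uses Petrunin's estimate (Theorem~\ref{Pet}) to get $\partial_t\vle_{g(t)}(B)\ge -C_n$, together with the distance-distortion control coming from $\Ric\ge -(n-1)\kappa$, to conclude that the volume ratio at time $t$ is still $\ge(1-\eta)\omega_n$ with $\eta$ controlled by $C_n t$ plus terms of order $\kappa$.

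This gap is made fatal by your choice of $T=k\to\infty$ in negating the statement. With that instantiation the bad time $t_k$ may stay bounded away from $0$ (it is only constrained by $t_k\le r_k^2k$), and over a time interval of definite length the volume of a unit ball can genuinely drop by a definite amount (Petrunin only gives a loss $\le C_n t$), so $\eta_k\not\to 0$ and the limit ball has volume ratio only $\ge\omega_n-C_n t_\infty$, which does not contradict non-flatness. The correct instantiation is the opposite one: take $\delta_i,\kappa_i, T_i\to 0$, which forces the first failure time to satisfy $t_i\le r_i^2T_i\to 0$ after normalizing $r_i=1$; then the Petrunin volume loss is $O(t_i)=o(1)$ and the near-equality survives to the blow-up time. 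Two further simplifications worth noting: since the flow is compact you may take $p_i$ to realize the spatial maximum of $|\Rm|_{g_i(t_i)}$, so only a time-picking (no spatial point-picking) is needed; and the limit need not be ancient --- a smooth limit of the balls $B_{\bar g_i(0)}(p_i,1)$ at the single rescaled time $0$, nonnegatively curved, non-flat, with volume ratio $\ge\omega_n$, already yields the contradiction.
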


\bdem
By rescaling it suffices to prove the statement for $r = 1$. Arguing by contradiction, suppose that there is an $\eps > 0$ and a sequence of Ricci flows $(M_i, g_i(t))$ defined on $[0, \bar t_i]$ satisfying
\bec \lb{asum_C}
K_{g_i(t)} \geq - \frac1{(n-1)i} \quad \text{ and } \quad \vle_{g_i(0)}B_{g_i(0)}(p, 1) \geq \(1 - \frac1{i}\) \omega_n
\eec
for all $p\in M_i$ and all $i$. But assume that we can also find a sequence of points and times $\{(p_i, t_i)\}$ such that
\bec \lb{asum_C2}
Q_i := |\Rm|_{g_i(t_i)}(p_i)  =\max_{q\in M_i}|\Rm|_{g_i(t_i)}(q) > \eps/t_i\, \qquad \mbox{ with \quad $t_i \to 0$.}
\eec

Next, we aim to show that the volume estimate in \eqref{asum_C} survives for some time. From \eqref{asum_C} and the evolution of $d_{g_i(t)}$ under \eqref{RF}, we deduce that
$$B_{g_i(t)}\big(p, e^{t/i}\big) \subset B_{g_i(t + \tau)}\big(p, e^{ (t + \tau)/i}\big) \qquad \text{ for all } \quad0\leq t < t + \tau \leq  \bar t_i.$$
Accordingly
\begin{eqnarray*}
\frac{\partial}{\partial t} \vle_{g_i(t)}\big(B_{g_i(t)}(p, e^{t/i})\big) 
&\geq&\frac{\partial}{\partial \tau}\bigg|_{\tau = 0}
 \vle_{g_i(t+\tau)}\big(B_{g_i(t)}(p, e^{t/i})\big)\\
&=& -\int_{B_{g_i(t)} \(p,\, e^{t/i}\)} \!\scal_{g_i(t)} \, d\mu_{g_i(t)}\\
&\geq& -C_n e^{(n-2)t/i},
\end{eqnarray*}
which follows from Petrunin's estimate (cf.~Theorem \ref{Pet}). This leads to 
\begin{align*}
\vle_{g_i(t)} \big(B_{g_i(t)}(p, e^{t/i})\big) \geq \vle_{g_i(0)} \big(B_{g_i(0)}(p, 1)\big) - C_n e^{(n-2) t/i} \, t \geq (1 - \eta_i) \, \omega_n
\end{align*} 
for all $t\in [0,t_i]$ and with $\eta_i\to 0$.
 Next, we can apply Lemma \ref{BG_ref} to conclude
\begin{align} \lb{cl_1} 
\vle_{g_i(t)} \(B_{g_i(t)} (p, r)\) & \geq
 \tfrac{1}{1 + C \frac{e^{2  t/i}}{i}}
\bigl(\tfrac{r}{e^{ t/i}}\bigr)^n
 \, \vle_{g_i(t)} \(B_{g_i(t)} (p, e^{ t/i})\) \nn 
 \\& \geq 
 \tfrac{e^{-n t_i/i}}{1 + C \frac{e^{2  t_i/i}}{i}}
 \, (1-\eta_i)r^n\omega_n \nn \\
& = (1-\mu_i)r^n\omega_n 
\end{align}
for all $0 < r \leq e^{t/i}$, $t\in [0,t_i]$ with $\mu_i\to 0$.

Now consider the rescaled solution to the Ricci flow $\bar g_i(t) = Q_i g(t_i + t/Q_i)$. Doing a time-picking argument, we can assume without loss of generality that
$$|\Rm|_{\bar g_i(t)} \leq 4 \quad \text{ on } B_{\bar g_i(0)} (p_i, 2)\quad \text{and} \quad t\in [-t_i Q_i/2, 0] \supset [-\eps/2, 0],$$
where the latter is true by \eqref{asum_C2}. In addition, 
\bec \lb{non_f}
|\Rm|_{\bar g_i(0)}(p_i) = 1.
\eec
A standard application of Shi's derivative estimates gives on $B_{\bar g_i(0)} (p_i, 1)$
\bec \lb{Cbd_bar2}
|\nabla^\ell \Rm|_{\bar g_i(t)} \leq \frac{C(n, \ell)}{(t + \eps/2)^{\ell/2}}; \quad \text{in particular} \quad |\nabla^\ell \Rm|_{\bar g_i(0)} \leq C(n, \ell, \eps).
\eec

After passing to a subsequence we may assume that $B_{\bar g_i(0)}(p_i,1)$ 
converges to a nonnegatively curved limit ball 
$B_{\bar g_\infty}(p_\infty,1)$ satisfying   \eqref{non_f} and 
\eqref{Cbd_bar2}. In particular $\vle_{\bar g_\infty}(B_{\bar g_\infty}(p_\infty,1))<\omega_n$. 
On the other hand, it is immediate 
from \eqref{cl_1} that $\vle_{\bar g_\infty}(B_{\bar g_\infty}(p_\infty,1))\ge \omega_n$ -- a contradiction.
\edem

\subsection{Smoothing $\bm{C^{1,1}}$ hypersurfaces}

\begin{lem}\label{lem: smoothing} Let $M^n$ be a smooth Riemannian manifold, $H$ 
a $C^{1,1}$ hypersurface, and $N$ a unit normal field.  
Suppose we have bounds $C_1\le A_H \le C_2$ on the principal curvatures of  
$H$ in the support sense. 
Then we can find a sequence of smooth hypersurfaces $H_i$ 
converging in the $C^1$-topology to $H$ such that   $C_1-\tfrac{1}{i}\le A_{H_i}\le C_2+\tfrac{1}{i}$.
If $H$ is invariant under the isometric action 
 of a compact Lie group $\lG$ on $M$,
 then one can assume in addition that $H_i$ is  
invariant under the action as well.
\end{lem}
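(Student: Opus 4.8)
The plan is to work locally and glue. First I would cover a neighborhood of $H$ in $M$ by finitely many (or countably many, locally finite) charts in which $H$ is written as a graph over an open set $U\subset \R^{n-1}$, i.e.\ $H\cap V = \{(x, f(x)) : x\in U\}$ for a $C^{1,1}$ function $f$; the bound $C_1\le A_H\le C_2$ in the support sense translates, via the second fundamental form formula for a graph, into a two-sided bound on the distributional Hessian of $f$ of the form $C_1' g_U \le \nabla^2 f \le C_2' g_U$ in the support (equivalently, barrier/viscosity) sense, where the constants $C_i'$ depend continuously on the point and degenerate to $C_i$ as the chart shrinks. Since $f\in C^{1,1}$, $\nabla^2 f$ exists a.e.\ and is an $L^\infty$ tensor, and the support bound is equivalent to the a.e.\ bound. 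Now I would mollify: set $f_\delta = f * \rho_\delta$ with a standard (metric-adapted) mollifier. Then $f_\delta\to f$ in $C^1_{\mathrm{loc}}$, and $\nabla^2 f_\delta = (\nabla^2 f)*\rho_\delta$ up to lower-order terms coming from the chart metric that are $O(\delta)$; since convolution with a nonnegative kernel of unit mass preserves the two-sided bound $C_1'\le \nabla^2 f\le C_2'$, we get $C_1'-o(1)\le \nabla^2 f_\delta\le C_2'+o(1)$, hence the principal curvatures of the smooth graph $H_\delta^{\mathrm{loc}}$ of $f_\delta$ satisfy $C_1-\tfrac1i\le A_{H_\delta^{\mathrm{loc}}}\le C_2+\tfrac1i$ once $\delta$ is small enough and the chart is chosen small enough.

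The second step is to patch the local smoothings into a single global smooth hypersurface. For this I would use the signed distance function: since $H$ is $C^{1,1}$ and compact-by-exhaustion, there is a tubular neighborhood $\mathcal N$ of $H$ on which the signed distance $s$ to $H$ is $C^{1,1}$, $H = s^{-1}(0)$, $|\nabla s| = 1$, and the Hessian of $s$ on $H$ encodes exactly the principal curvatures (it satisfies the Riccati equation along normal geodesics). Thus it suffices to smooth the single function $s$ near its regular level $0$: replace $s$ by $s_i := s * \rho_{\delta_i}$ in normal coordinates (or, more invariantly, mollify in the $\mathcal N \cong H\times(-\eta,\eta)$ product using a partition of unity subordinate to the graph charts above, which is consistent because on overlaps the graph representations of $H$ differ by a diffeomorphism and the mollified functions agree up to $o(1)$ in $C^1$). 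Because $\nabla s_i\to\nabla s$ uniformly and $|\nabla s|=1$, for large $i$ the level set $H_i := s_i^{-1}(0)$ is a smooth hypersurface, $C^1$-close to $H$; and the second fundamental form of $H_i$ computed from $\nabla^2 s_i$ is a convolution average of that of $H$ plus $O(\delta_i)$ error, so the eigenvalue bounds $C_1-\tfrac1i\le A_{H_i}\le C_2+\tfrac1i$ persist.

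For the equivariant refinement, if $H$ is invariant under an isometric action of a compact Lie group $\lG$, I would simply average the mollified distance function over $\lG$: set $\bar s_i(q) := \int_{\lG} s_i(g\cdot q)\,dg$ with respect to normalized Haar measure. Averaging over isometries preserves the property that $\bar s_i\to s$ in $C^1$ and preserves the two-sided Hessian bound (it is again a convex-combination-type average of $\lG$-translates of $\nabla^2 s_i$, each of which satisfies the bound since $\lG$ acts by isometries), and $\bar s_i$ is now $\lG$-invariant, so $H_i := \bar s_i^{-1}(0)$ is a $\lG$-invariant smooth hypersurface with the required curvature bounds. (One should take the mollifier itself to be rotationally symmetric in the normal coordinates, or simply perform the $\lG$-averaging after the Euclidean mollification, which is what I wrote.)

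The main obstacle I expect is the patching/consistency step: making sure that the locally defined mollifications glue to a \emph{globally} well-defined smooth hypersurface while keeping the curvature bounds uniform, rather than just getting local smoothings. Working with the single signed-distance function $s$ on a tubular neighborhood (which exists precisely because $H$ is $C^{1,1}$, giving a $C^{1,1}$ — in fact Lipschitz-gradient — function with $|\nabla s|=1$) is the device that circumvents this: one mollifies one function, not an atlas of graphs, so there is nothing to reconcile on overlaps, and the $\lG$-averaging is then a clean final step. The remaining technical care is quantitative control of the $O(\delta)$ commutator terms between convolution and the ambient metric, which is routine given the $C^{1,1}$ regularity.
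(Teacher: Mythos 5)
Your proposal is correct and follows essentially the same route as the paper: both smooth the signed distance function on a tubular neighborhood of $H$ (where $|\nabla f|=1$ and the Riccati equation gives the two-sided Hessian bound up to $\eps$), take the zero level set of the mollification as $H_i$, and obtain equivariance by averaging over $\lG$ with Haar measure. The preliminary graph-chart discussion in your first paragraph is superfluous, since your signed-distance argument is self-contained and is exactly the paper's device for avoiding the patching problem.
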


\begin{proof} First, we give the proof in the case of a compact hypersurface.

We consider a small tubular neighborhood $U=B_{r_0}(H)$ of $H$. 
By assumption $U\setminus H$ has two components $U_+$ and $U_-$. 
We consider the function $f\colon U \rightarrow \R$ 
which is defined on $U_+\cup H$ as the distance to $H$ 
and on $U_-$ as minus the distance to $H$. 
Clearly $f$ is a $C^{1,1}$-function. 
Moreover, it is easy to deduce that for each $\eps$ we can find 
$r$ such that 
\[
C_1-\eps \le \nabla^2 f\le C_2+\eps \quad \mbox{ on \quad $B_r(H)$} 
\]
holds in the support sense.
Furthermore, we know of course $|\nabla f|\equiv 1$. 
Now it is not hard to see 
that for each $\eps>0$ we can 
find a smooth function $f_{\eps}$ 
on $B_{r/2}(H)$ satisfying 
\begin{eqnarray*}
C_1-2\eps \le \nabla^2 f_{\eps} &\le& C_2+2\eps\hspace*{1em} \mbox{ and} \\
|f_{\eps}-f|+|\nabla f-\nabla f_{\eps}|&\le& \min\{\eps,r/10\}\hspace*{3.4em}\mbox{ on $B_{r/2}(H)$.} 
\end{eqnarray*}
It is now straightforward to check that 
for $\eps_i=\tfrac{1}{10i(|C_1|+|C_2|+1)}$
we can 
put 
$H_i:=f_{\eps_i}^{-1}(0)$ and check the claimed bounds on 
the principal curvatures of $H_i$.

If $H$ is invariant under the isometric action 
 of a compact Lie group $\lG$, then $f(gp)=f(p)$ for all $p\in H$. 
By putting $\tf_{\eps}(p):= \tfrac{1}{\vol(\lG)}\int_{\lG} f_{\eps}(gp)\,d\mu(g)$
we obtain a $\lG$-invariant function with the same bounds on the 
Hessian as $f_\eps$. We can then define $H_i$ as before.

If the hypersurface is not compact one uses a ($\lG$-invariant) compact exhaustion and argues as before.
\end{proof}

\section{The general case}\lb{sec:gral}
\subsection{Estimates on the Hessian of the squared distance function}

\begin{prop}\lb{AR32}
Let $(M^n, g)$ be an open manifold  with $K_g \geq 0$, let $C_\ell$ be 
a sublevel set of the Busemann function (see subsection~\ref{subsec: chgr}), and $p\in \partial C_\ell$.  
For each unit normal vector $v \in N_p C_\ell$ there
is a smooth hypersurface $S$ supporting $\partial C_\ell$ at $p$
from the outside such that $T_pS$ is given by the orthogonal complement of 
$v$, and the second fundamental form $A$ of $S$ satisfies
\bec \lb{2ff-ss}
u\geq \<A_v w, w\>  \geq c \, \Rm(w, v, v, w)^2 \quad \text{ for all} \quad    w \in T_p S \quad \text{with} \quad |w| = 1,
\eec
for some positive constants $c$ and $u$ 
depending on $C_\ell$. 
\end{prop}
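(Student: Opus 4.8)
The idea is to produce the supporting hypersurface $S$ as a sublevel set of a smoothed distance-type function and then extract the curvature estimate \eqref{2ff-ss} by feeding the Riccati comparison machinery of Lemmas~\ref{Ric_comp_cons} and~\ref{Ric_comp}. Since $C_\ell=b^{-1}((-\infty,\ell])$ is totally convex with totally geodesic smooth interior and the Busemann function is convex (so $\nabla^2 b\ge 0$ in the support sense), at $p\in\partial C_\ell$ and for a prescribed unit normal $v\in N_pC_\ell$ there is a support function for $\partial C_\ell$ at $p$; concretely, one can take the function $h(x)=d_g(x,C_{\ell-s})$ for small $s>0$, or equivalently exploit that $\partial C_\ell$ has principal curvatures bounded above in the support sense (nonnegative, since $C_\ell$ is convex). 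So the upper bound $\langle A_v w,w\rangle\le u$ will come essentially for free from convexity of $C_\ell$ together with an a priori upper bound on $|\Rm|$ near $\partial C_\ell$ depending only on $C_\ell$ (the boundary sits in a fixed compact region where the curvature is bounded).

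**Main step: the lower bound.** The substance is the inequality $\langle A_v w,w\rangle\ge c\,\Rm(w,v,v,w)^2$. The plan is to look at a minimal geodesic $\gamma$ realizing the distance from $p$ to $C_{\ell-1}$ (or to an appropriate inner parallel set at distance $1$), parametrized so that $\gamma(0)$ lies deep inside $C_\ell$ and $\gamma(1)=p$, with $\dot\gamma(1)=v$. The shape operator of the distance spheres to $C_{\ell-1}$ along $\gamma$ satisfies a Riccati equation $A'(s)+A^2(s)+R(s)=0$ where $R(s)=\Rm(\cdot,\dot\gamma,\dot\gamma,\cdot)$, with $A(s)\ge 0$ by convexity and $R(s)\ge 0$ by $K_g\ge 0$; moreover $R$ and $|R'|$ are bounded on $[0,1]$ by constants depending only on $C_\ell$ (again since $\gamma$ stays in a fixed compact set, where $|\Rm|$ and $|\nabla\Rm|$ are controlled). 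Then Lemma~\ref{Ric_comp_cons} produces $A_0\le C_R$ with $A(0)\ge A_0$ and $\langle A_0 w,w\rangle\ge\eps_0\langle R(0)w,w\rangle^2$ — but I actually want the estimate at $s=1$, i.e. at $p$. For this I run the argument in the reverse direction: reparametrize $\tilde A(s)=A(1-s)$ so that $\tilde A$ solves a Riccati equation with $\tilde A(0)=A(1)=A_{\partial C_\ell}$ at $p$, and apply the same reasoning (the roles of the two endpoints are symmetric since the geodesic segment $\gamma|_{[0,1]}$ sits in a compact set with uniform curvature bounds, so the "$R(0)$" for the reversed equation is $\Rm(\cdot,v,v,\cdot)$ at $p$). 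This yields $\langle A_v w,w\rangle\ge \eps_0\,\Rm(w,v,v,w)^2$ at $p$, which is \eqref{2ff-ss} with $c=\eps_0$.

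**Smoothing.** Finally, to get a genuine smooth hypersurface $S$ (the distance function $d_g(\cdot,C_{\ell-1})$ is only $C^{1,1}$ near $p$, and $\partial C_\ell$ itself may be non-smooth), I invoke Lemma~\ref{lem: smoothing}: the $C^{1,1}$ hypersurface obtained as a level set of $d_g(\cdot,C_{\ell-1})$ through $p$ has principal curvatures pinched in the support sense between the bounds just derived (up to the curvature terms), so it can be approximated in $C^1$ by smooth hypersurfaces $H_i$ with principal curvatures changing by at most $1/i$; picking $i$ large and translating slightly outward one obtains a smooth $S$ supporting $\partial C_\ell$ at $p$ with $T_pS=v^\perp$ and the asserted two-sided bound on $A$, after absorbing the $1/i$ error into the constants $c$ and $u$ (which is harmless as these already depend on $C_\ell$).

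**Expected main obstacle.** The delicate point is the interaction between the support-sense (one-sided) Hessian bounds and the need for a pointwise Riccati identity: one cannot literally differentiate $A$ at $p$ because $\partial C_\ell$ is not smooth there, so the clean way is to work with the smooth inner-parallel hypersurfaces (distance spheres around $C_{\ell-1}$, which are smooth where $d(\cdot,C_{\ell-1})$ is), establish the estimate there, and pass to the limit $s\to$ boundary while keeping the constants under control — making sure the curvature quantity $\Rm(w,v,v,w)$ at $p$ is correctly identified as the limit of $R(s)$ and that no degeneracy ($A(s)\to\infty$) occurs, which is guaranteed precisely because $p$ is at positive distance $1$ from $C_{\ell-1}$ so $A$ stays bounded on $[0,1]$.
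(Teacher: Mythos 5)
There is a genuine gap, and it sits exactly at the point the proposition is designed to handle: the normal cone $N_pC_\ell$ at a non-smooth boundary point is in general more than one-dimensional, and the statement requires a supporting hypersurface with $T_pS=v^{\perp}$ for \emph{every} unit $v\in N_pC_\ell$. Your construction via the minimal geodesic from $p$ to the inner parallel set $C_{\ell-1}$ produces only the single normal direction realized by that geodesic; for a generic $v$ in the cone there is no distance function to an inner (or outer) set whose gradient at $p$ is $v$. The paper resolves this with Yim's description of $N_pC_\ell$ (Proposition~\ref{Yim}): $\lambda v=\sum_i\alpha_i u_i$ is a convex combination of initial directions $u_i$ of minimal geodesics from $p$ to $\partial C_{\ell+2r}$; one builds a lower support function $\varphi_i$ of $b$ for each $u_i$ (from a smooth hypersurface $H_i$ placed at the \emph{outer} end $\gamma_i(r)$), and takes $S$ to be a level set of $\phi=\sum_i\alpha_i\varphi_i$, which is automatically smooth. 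Lemma~\ref{Ric_comp_cons} then gives $\nabla^2\varphi_i|_p(w,w)\ge\eps_0\,\Rm(w,u_i,u_i,w)^2$ for each $i$, but this only yields a lower bound by $\sum_i\alpha_i\Rm(w,u_i,u_i,w)^2$; converting that into $c\,\Rm(w,v,v,w)^2$ requires a separate algebraic step (expanding $\Rm(w,\sum\alpha_iu_i,\sum\alpha_ju_j,w)$, bounding the mixed terms via $K_g(\alpha_iu_i-\alpha_ju_j,w)\ge0$, and a discrete H\"older inequality). This step has no counterpart in your proposal and is not a formality.

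Two further problems with the route you chose. First, the time reversal of the Riccati equation does not do what you want: if $A'+A^2+R=0$ and $\tilde A(s)=A(1-s)$, then $\tilde A'=\tilde A^2+R$, so $\tilde A$ is not a solution of the equation to which Lemma~\ref{Ric_comp_cons} applies, and the endpoints are genuinely asymmetric --- the lemma bounds $A$ from below at the \emph{initial} point of a forward, nonnegative solution, i.e.\ at the end of the geodesic from which the flow still runs for a definite parameter length. That is why the paper places the auxiliary hypersurface at the outer end $\gamma_i(r)\in\partial C_{\ell+r}$ and reads off the estimate at $p=\gamma_i(0)$; with your inner geodesic, $p$ sits at the wrong end. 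Second, the level set $\{d(\cdot,C_{\ell-1})=1\}$ need not support $C_\ell$ from the outside at all (near a sharp corner of a convex set, points of the boundary lie at distance much larger than $1$ from the inner parallel set), and the $C^1$-smoothing of Lemma~\ref{lem: smoothing} followed by an outward translation destroys the exact touching of $S$ and $\partial C_\ell$ at $p$, which is what ``supporting at $p$'' requires. The paper's $S$ needs no smoothing because it is a level set of a finite convex combination of distances to smooth hypersurfaces, hence smooth near $p$ from the start.
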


\bdem We fix $r>0$ smaller than a quarter of the convexity radius of 
$C_{\ell + r}$. Proposition \ref{Yim} by Yim ensures that any element of $N_p C_{\ell}$ can be obtained, up to scaling, as (hereafter we use Einstein sum convention)
$$ \alpha^i u_i \qquad \text{ with } \quad |u_i| = 1, \quad  \alpha_0 + \ldots + \alpha_k =1 \quad \text{and} \quad \alpha_i \geq 0,$$
where each $u_i \in {\rm span}(T_p C_{\ell})$ is such that $\gamma_i(s) = \exp_p(s u_i)$ is the minimal geodesic from $p$ to a point of $\partial C_{\ell+2r}$. As ${\rm dim} (C_{\ell}) = n$, we can choose $k\le n$ by Carath\'eodory's Theorem (cf.~\cite{Rocka}).

Consider $q_i:=\gamma_i(r)\in C_{\ell+r}$ 
and the hyperplane $V_i\subset T_{q_i}M$ perpendicular to
$\gamma_i'(r)$. Since $\gamma_i'(r)\in N_{q_i}C_{\ell+r}$ 
it follows that $H_i:=\exp(B_r(0)\cap V_i)$ is a smooth hypersurface 
supporting $C_{\ell+r}$ from the outside. 

Then $\varphi_{i}:=r+\ell-d(H_i,\cdot)$ is a lower 
support function of the Busemann function $b$ at 
$p$ (which can be seen using e.g.~Lemma \ref{wu_le} by Wu).
 Note that 
$\nabla^2\varphi_i|_{\gamma_i(s)}$ 
is a positive semidefinite solution of a Riccati equation for
$s\in [0,r]$. So we clearly have upper bounds (just depending on $C_\ell$) for $\nabla^2\varphi_i|_p$.
 Lemma~\ref{Ric_comp_cons} now yields \begin{equation}
\lb{DL1}\nabla^2\varphi_i|_p(w,w)\ge \eps_0 \Rm(w,u_i,u_i,w)^2
     \end{equation}
for all unit vectors $w\in T_pM$.

As mentioned above there is some $\lambda>0$ such that 
$\lambda v=\alpha^iu_i$ with $\sum_i\alpha_i=1$.
Define $\phi =  \alpha^i \varphi_{i}$, which is a function whose gradient is $\lambda  v$.
 Since it is a convex combination of lower support functions for $b$ at $p$,
 $\phi$ is also a lower support function for $b$ at $p$; therefore, $b^{-1}\((-\infty, \ell]\) \cap B_r(p) \subset \phi^{-1}\((-\infty, \ell]\) \cap B_r(p)$. Consequently, if we define $S$ as the level set $\phi^{-1}(\ell)\cap B_r(p)$, then $T_p S$ is orthogonal to $v$ and $S$ supports $C_\ell$ at $p$ from the outside. Moreover, the second fundamental form of $S$ at $p$ is proportional to $\nabla^2 \phi |_p$, and from \eqref{DL1} we have
\bec \lb{DL2}
\nabla^2 \phi |_p (w, w) =  \alpha^i \nabla^2 \varphi_{ i}|_p (w, w) \geq  \eps_0 \, \alpha^i  R_w(u_i, u_i)^2.
\eec

Next, using $K_g(\alpha_i u_i - \alpha_j u_j, w) \geq 0$ we can  estimate the curvature:
$$R_w(\alpha^i u_i,  \alpha^j u_j)    \leq  
\frac1{2} \sum_{i,j} \(\alpha_i^2 R_w(u_i, u_i) + \alpha_j^2 R_w(u_j, u_j)\) \leq (n + 1)  (\alpha^i)^2 R_w(u_i, u_i).$$
Now, combining a discrete version of H\"older's inequality applied to \eqref{DL2}, that $\alpha_i \leq 1$ and the above computation, we reach
\begin{align*}
\nabla^2 \phi|_p(w,w) & \geq \frac{\eps_0}{n+ 1} \Big( \sum_i \, \sqrt{\alpha_i}\, R_w (u_i, u_i)\Big)^2 \geq \frac{\eps_0}{n+ 1} \Big( \sum_i \, \alpha_i^2\, R_w (u_i, u_i)\Big)^2 
\\ & \geq \frac{\eps_0}{(n + 1)^2} R_w(\alpha^i u_i, \alpha^j u_j)^2 = c(n, \eps_0) R_{\alpha^i u_i}(w, w)^2.
\end{align*}

Finally, the statement follows since the second fundamental form of $S$ satisfies $\<A_v w, w\> \geq c(n, \eps_0) \lambda^3 R_v(w, w)^2$,  and it is easy to see that $\lambda$ is bounded below by a constant just depending 
on $C_{\ell}$. 
\edem

\begin{cor} \lb{est_Hesf}
Consider $(M^n, g)$ and $C= C_\ell$ as in Proposition \ref{AR32}. Then there exists a neighborhood $U$ of $C_\ell$ such that $f = d^2(\cdot, {C})$ is a $C^{1,1}$ function on $U$ and satisfies the following estimates
\bec \lb{in_f}
- \lambda f^{3/2} \leq \nabla^2 f \leq 2 \qquad \text{on} \qquad U 
\eec
in the support sense, for some positive constant $\lambda = \lambda(C)$. 
\end{cor}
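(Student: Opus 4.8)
The plan is to bound $\nabla^2 f$ from above and below in the support sense by producing, at each point, a smooth upper (resp.\ lower) support function for $f$ carrying the required Hessian bound; a two‑sided support‑sense Hessian bound together with the local Lipschitz character of $f$ then forces $f\in C^{1,1}(U)$, so the regularity claim comes for free.

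First I would fix the neighbourhood. Since $C_\ell$ is totally convex and compact, the standard theory of convex sets in Riemannian manifolds (see Appendix~\ref{AppB}) gives a neighbourhood $U$ of $C_\ell$ on which the nearest‑point projection $\pi=\pi_{C_\ell}$ is single‑valued and Lipschitz and the minimizing segment from $x$ to $\pi(x)$ varies smoothly; shrinking $U$ I may assume $d(x,\pi(x))$ is smaller than the injectivity radius at $\pi(x)$ and than every geometric constant attached to $C_\ell$ used below (the radius $r$ and convexity/focal radii from Proposition~\ref{AR32}, and $s_0$ from Lemma~\ref{Ric_comp}). For the \emph{upper bound}: given $x\in U$, put $p:=\pi(x)\in C_\ell$; then $d^2(\cdot,p)\ge d^2(\cdot,C_\ell)=f$ with equality at $x$, and $d^2(\cdot,p)$ is smooth near $x$. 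Because $K_g\ge 0$, Hessian comparison with the Euclidean model gives $\nabla^2 d^2(\cdot,p)\le 2g$; hence $d^2(\cdot,p)$ is a smooth upper support function for $f$ at $x$ with Hessian $\le 2g$, and $\nabla^2 f\le 2$ on $U$ in the support sense (trivial where $f\equiv 0$).

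The core is the \emph{lower bound}, and here Proposition~\ref{AR32} and Lemma~\ref{Ric_comp} do the work. For $x\in U\setminus C_\ell$ set $s:=d(x,C_\ell)=\sqrt{f(x)}$, $p:=\pi(x)$, and let $v\in N_pC_\ell$ be the unit vector with $x=\exp_p(sv)$; the segment $\sigma(t)=\exp_p(tv)$ meets $\partial C_\ell$ orthogonally at $p$. Apply Proposition~\ref{AR32} to $(p,v)$ to get a smooth hypersurface $S$ supporting $\partial C_\ell$ at $p$ from the outside, with $T_pS=v^\perp$ and second fundamental form $A_v$ satisfying $u\ge\langle A_v w,w\rangle\ge c\,\Rm(w,v,v,w)^2$ for unit $w\in T_pS$. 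Since $C_\ell$ lies (locally) on the $S^-$ side, $f=d^2(\cdot,C_\ell)\ge d^2(\cdot,S^-)$ near $x$; and as $\sigma$ leaves $S$ orthogonally with $x$ on the $S^+$ side, $d(x,S^-)=d(x,S)=s$, so equality holds at $x$. Near $x$ one has $d(\cdot,S^-)=d(\cdot,S)=:\rho$ smooth, so $\rho^2$ is a smooth lower support function for $f$ at $x$ and it suffices to bound $\nabla^2(\rho^2)|_x$ below. Along $\sigma$ the operator $A(t):=\nabla^2\rho|_{\sigma(t)}$ on $\dot\sigma(t)^\perp$ solves $A'+A^2+R=0$ with $A(0)=A_v$ and $R(t)=\Rm(\cdot,\dot\sigma(t),\dot\sigma(t),\cdot)\ge 0$, where $R$ and $R'$ are bounded over $x\in U$ by constants depending only on $C_\ell$. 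Taking $A_0:=A_v$ and $\eps_0:=c$, the two displays of Proposition~\ref{AR32} are exactly hypotheses \eqref{A0_bdds} and \eqref{conv_ref} of Lemma~\ref{Ric_comp} (after enlarging $C_R$ so that $u\le C_R$), so that lemma yields $A(s)\ge -Cs^2\,\mathrm{Id}$ with $C=C(C_\ell)$, i.e.\ \eqref{inA_bel}. Since $\nabla^2(\rho^2)=2\rho\,\nabla^2\rho+2\,d\rho\otimes d\rho$, at $x=\sigma(s)$ the radial component is $2$, the mixed components vanish, and for unit $w\perp\dot\sigma(s)$ we get $\nabla^2(\rho^2)|_x(w,w)=2s\langle A(s)w,w\rangle\ge -2Cs^3=-2C f(x)^{3/2}$. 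Hence $\nabla^2 f\ge -\lambda f^{3/2}$ on $U\setminus C_\ell$ with $\lambda:=2C$, and the bound holds trivially on $C_\ell$ (where $f\equiv 0$ is a minimum). Combining with the upper bound gives \eqref{in_f}, and the existence of a uniformly bounded smooth lower and upper support function at every point reconfirms $f\in C^{1,1}(U)$.

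I expect the main obstacle to be the lower bound, specifically two points: (i) the sign/side bookkeeping needed to see that the support hypersurface $S$ of Proposition~\ref{AR32} yields, through $d^2(\cdot,S)$, a genuine \emph{lower} support function for $f$ at exterior points near their footpoints; and (ii) checking that the geometric data of $S$ match \emph{verbatim} the abstract hypotheses \eqref{A0_bdds}–\eqref{conv_ref} of the Riccati estimate Lemma~\ref{Ric_comp}, with every constant ($C_R$, $C_{R'}$, $s_0$, $\lambda$, the size of $U$) controlled by $C_\ell$ alone since $C_\ell$ is compact.
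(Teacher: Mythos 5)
Your proposal is correct and follows essentially the same route as the paper: the upper bound via the smooth upper support function $d^2(\cdot,p)$ at the foot point together with Hessian comparison under $K_g\ge 0$, and the lower bound via the supporting hypersurface $S$ from Proposition~\ref{AR32}, the lower support function $d^2(\cdot,S)$, and the Riccati estimate of Lemma~\ref{Ric_comp} applied with $A_0=A_v$. The only cosmetic difference is that the paper obtains the $C^{1,1}$ regularity directly from Walter's tubular neighborhood theorem (Theorem~\ref{tubular}) rather than from the two-sided support bounds, but your more detailed bookkeeping of sides and constants matches the intended argument.
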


\bdem 
By a result of Walter (see Theorem \ref{tubular}) 
we can find
 a tubular neighborhood $U$ of $C$, 
such that $f$ is $C^{1,1}$ on $U$. 

Let $q\in U$ and let $p\in \partial C$ denote a point 
with $d(q,p)=d(q,C)$. 
Clearly $d(\cdot,p)^2$ is an upper support function of $f$ at $q$ 
and thus $\nabla^2 f|_q \leq 2$.

In order to get the lower bound, we consider 
a minimal unit speed geodesic $\gamma(s)$ 
from $p$ to $q$. 
The initial direction $v =\gamma'(0)$ 
is a normal vector, and by Proposition~\ref{AR32}
we can find a hypersurface $S$ touching $C$ from the outside 
at $p$ such that $T_pS$ is normal to $v$,
and the second fundamental form of $S$ 
is bounded  by $u\ge \ml A_v w,w\mr\ge c \,\Rm(v,w,w,v)^2$ 
for any unit vector $w$.

Notice that $a^2:=d(S,\cdot)^2$ is a lower 
support function of $f$ at $q$.
Since $A(s) = \nabla^2 a|_{\gamma(s)}$ satisfies 
a Riccati equation with $A(0)=A_v$, 
we can employ Lemma \ref{Ric_comp} (for which we can take $A_0 = A_v$, since the latter is bounded above) to conclude $A(s) \geq - C s^2$. 
Consequently, $\nabla^2 f|_q\ge -2C \, a(q)\,  d(p,q)^2=-2Cf^{3/2}(q)$.
\edem

\subsection{A sequence of graphical sets with controlled curvatures}

For any $r >0$ and any set $S \subset M$ consider the 
tubular neighborhood $B_r(S) = \cup_{p \in S} \overline{B}_r(p)$. 
\begin{prop}\lb{2ff_bdd}\lb{smoothing}
Let $C \subset (M^n, g)$ be a sublevel set of the Busemann function
 (see subsection~\ref{subsec: chgr}). Then we can construct a sequence $\{D_k\}_{k = 1}^\infty$ of $C^{\infty}$ closed hypersurfaces of $B_1(C) \times [0, 1]$ which converges in the Gromov-Hausdorff sense to the double of $C$, and whose principal curvatures $\lambda_i$ satisfy
\bec \lb{ppal_est}
- \frac{b}{k^2} \leq \lambda_i \leq B \, k
\eec
for all $1\leq i \leq n$ and some positive constants $b, B$
depending on $C$.
Hence, if we endow $D_k$ with 
the induced Riemannian metric $g_k$, we get the curvature estimates
\bec \lb{conCSC}
 -\frac{\tb}{k}  \leq K^{\ce}_{g_k}\,\, \quad \mbox{ and }\,\, \quad 
|\Rm_{g_k}| \leq  \tilde B k^2\qquad \text{on} \qquad  D_k.
\eec

\end{prop}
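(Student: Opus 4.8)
The plan is to build $D_k$ as a graph over the double $D(C)$, reparametrizing the distance function $f = d^2(\cdot, C)$ from Corollary~\ref{est_Hesf} so that the two halves glue smoothly, and then applying the smoothing result of Lemma~\ref{lem: smoothing} to pass from a $C^{1,1}$ hypersurface to a $C^\infty$ one with only a small loss in the curvature bounds. Concretely, first I would fix a small tubular neighborhood $U$ of $C$ on which $f$ is $C^{1,1}$ and satisfies $-\lambda f^{3/2} \le \nabla^2 f \le 2$ in the support sense. I would then choose a function $\psi_k \colon [0,\infty) \to [0,1]$, convex, with $\psi_k \equiv 1$ outside a neighborhood of $0$, $\psi_k(0) = 0$, and all derivatives vanishing appropriately at $0$, scaled so that $\psi_k$ has slope roughly $k$; set $u_k := \psi_k \circ f$ on $U \cap C^c$ and extend by $0$ on $C$. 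The graphs $G_k = \{(x, u_k(x))\}$ and $\tilde G_k = \{(x, 2 - u_k(x))\}$ inside $M \times [0,1]$ then paste together smoothly along $\partial C$ exactly as in the proof of Proposition~\ref{IniAp}, giving a $C^{1,1}$ closed hypersurface $D_k' = G_k \cup \tilde G_k$ whose Gromov--Hausdorff limit as $k \to \infty$ is the double $D(C)$.

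The second step is to estimate the principal curvatures of $D_k'$ in the support sense. Writing $D_k'$ as a level set of $\Phi_k(x,s) = s - u_k(x)$ (on one half) in the product $M \times \R$, the second fundamental form at a point is controlled by $\nabla^2 u_k = \psi_k''(f)\, df \otimes df + \psi_k'(f)\, \nabla^2 f$ together with the size of $\psi_k'$. The upper bound $\nabla^2 f \le 2$ gives $\nabla^2 u_k \le 2\psi_k'(f) + \psi_k''(f)|\nabla f|^2$; choosing the profile $\psi_k$ with $\psi_k' = O(k)$ and $\psi_k''$ controlled by $k^2$ (times a factor supported near $f = 0$ where the graph is nearly vertical) yields the upper bound $\lambda_i \le Bk$, the factor $k$ coming from the steepness of the cylindrical-type end. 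For the lower bound, the key point is that the bad term $\psi_k'(f)\nabla^2 f$ is bounded below by $-\lambda\, \psi_k'(f)\, f^{3/2}$, and since $\psi_k'$ is only large (of order $k$) on a region where $f \lesssim 1/k^2$, we get $\psi_k'(f) f^{3/2} = O(k \cdot k^{-3}) = O(k^{-2})$, while $\psi_k''$ is nonnegative by convexity of $\psi_k$ and hence does not hurt the lower bound. This is precisely why the profile was arranged to be convex: the convexity of $\psi_k$ turns the potentially large positive $\psi_k''$ term into an asset for the lower bound and one only has to fight the $\psi_k' \nabla^2 f$ term, which is tamed by the $f^{3/2}$ decay from Corollary~\ref{est_Hesf}. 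This gives $\lambda_i \ge -b/k^2$ in the support sense.

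The third step is to replace $D_k'$ by a genuinely smooth hypersurface $D_k$. Here I would invoke Lemma~\ref{lem: smoothing} with $C_1 = -b/k^2$ and $C_2 = Bk$: it produces a $C^\infty$ hypersurface $C^1$-close to $D_k'$ with principal curvatures in $[C_1 - \tfrac1j, C_2 + \tfrac1j]$ for any $j$; choosing $j$ large (say $j = k^2$) absorbs the error into a slightly worse constant, and $C^1$-closeness preserves the Gromov--Hausdorff convergence to $D(C)$. Finally, to deduce \eqref{conCSC}, one computes the intrinsic curvature of $(D_k, g_k)$ from the Gauss equation: $R^{g_k}(X,Y,Y,X) = R^{M \times \R}(X,Y,Y,X) + \langle A X, X\rangle\langle A Y, Y\rangle - \langle A X, Y\rangle^2$, and the analogous complexified identity for $K^\ce$. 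Since $M \times \R$ has $K^\ce \ge 0$, the ambient term is nonnegative, so $K^\ce_{g_k}$ is bounded below by the second-fundamental-form contribution, which is of the form (eigenvalue)$\cdot$(eigenvalue) and hence $\ge -(b/k^2)(Bk) = -\tilde b/k$. The upper curvature bound $|\Rm_{g_k}| \le \tilde B k^2$ follows since the ambient curvature is bounded on the compact region $B_1(C)$ and the second fundamental form is $O(k)$, so its square is $O(k^2)$.

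The main obstacle is the lower curvature bound, i.e.\ getting the $-b/k^2$ control on the principal curvatures rather than something merely $O(1)$ or worse. Everything hinges on the delicate interplay between the steepness scale of the profile $\psi_k$ (forced to be $\sim k$ so that the graphs close up into an approximate double) and the $f^{3/2}$ vanishing rate of the lower Hessian bound for $d^2(\cdot,C)$ from Corollary~\ref{est_Hesf} (which is itself the $\Rm(v,w,w,v)^2$ estimate of Proposition~\ref{AR32} in disguise). One must choose the transition region for $\psi_k$ to sit exactly where $f \sim k^{-2}$, so that $\psi_k'(f) \cdot f^{3/2} \sim k \cdot k^{-3} = k^{-2}$ — any mismatch in these exponents destroys the estimate, and keeping careful track of the support-sense (barrier) nature of all the Hessian inequalities through the composition with $\psi_k$ and through the smoothing step requires care.
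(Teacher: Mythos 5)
Your strategy is the same as the paper's: graph a reparametrization of $f=d^2(\cdot,C)$ over a collar of $C$, double it, estimate the principal curvatures in the support sense via Corollary~\ref{est_Hesf}, smooth with Lemma~\ref{lem: smoothing}, and finish with the Gauss equation; and you have correctly isolated the decisive exponent count, namely that the $-\lambda f^{3/2}$ lower Hessian bound evaluated where $f\sim k^{-2}$ is what yields $\lambda_i\ge -b/k^2$. But the profile function as you describe it has two genuine problems. First, a convex $\psi_k$ with $\psi_k(0)=0$ and $\psi_k\equiv 1$ outside a neighborhood of $0$ does not exist: convexity forces the difference quotients to be nondecreasing, so the function cannot rise from $0$ and then become constant. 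What is needed (and what the paper uses, $\phi_k=\tfrac1k\phi(k^2f)$ with $\phi\equiv 0$ on $[0,1/4]$ and $\phi$ convex, increasing to $1$ at $k^2f=1$) is a profile that is \emph{flat} over $C$ and near $\partial C$ and whose graph turns \emph{vertical} at the outer edge $d(\cdot,C)=1/k$ of the collar; the two copies are glued along that outer edge, not along $\partial C$, and the smooth closing-up there forces $\psi_k'\to\infty$.

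Second, precisely because $\psi_k'$ and $\psi_k''$ must blow up at the gluing locus, your asserted bounds $\psi_k'=O(k)$ and $\psi_k''=O(k^2)$ cannot hold, and the estimate $\lambda_i\le Bk$ does not follow from them as stated. The correct mechanism is that the second fundamental form is $\nabla^2u_k/\sqrt{1+|\nabla u_k|^2}$, so the steepness that destroys the pointwise bounds on $\nabla^2u_k$ reappears in the denominator. This normalization automatically tames the term $\psi_k'(f)\nabla^2f$ (in both directions, which also repairs your lower-bound computation near the gluing locus), but the term $\psi_k''(f)\,df\otimes df$ is of order $\psi_k''|\nabla f|^2$ against a denominator of order $\psi_k'|\nabla f|$, and is only $O(k)$ if one imposes a structural condition on the profile; the paper requires $\phi''\le\alpha(\phi')^3$ for exactly this purpose. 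Without such a condition the upper principal-curvature bound fails near the gluing locus. With these two corrections the rest of your argument (support-sense bookkeeping, Lemma~\ref{lem: smoothing}, Gauss equation) goes through as in the paper.
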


\bdem In a first important step 
we will construct a closed  $C^{1,1}$ hypersurface $D_k$ so that 
 \eqref{ppal_est} holds for its principal curvatures
in the support sense.
Define $\phi_k = \frac1{k} \, \phi(k^2 f)$, where as before $f = d^2(\ccdot,  C)$ and $\phi: [0, 1] \fle [0,1]$ is a smooth function satisfying
\bi
\item[(a)] $\phi \equiv 0$  on $[0, 1/4]$ and $\phi(1) =1$;

\item[(b)] on $(1/4, 1)$: $\phi', \phi''$ are positive and $\phi'' \leq \alpha (\phi')^3$ for some finite $\alpha >0$;

\item[(c)] $\phi^{-1}$ has all left derivatives vanishing at $1$.
\ei
Notice that (c) implies that $\phi'$ and $\phi''$ tend to infinity at $1$. Hereafter, $\phi, \phi'$ and $\phi''$ will always be evaluated at $k^2 f$ (without saying it explicitly).

Consider the tubular neighborhood $U$ from Corollary \ref{est_Hesf}, and take
$$G_k = \{(p, \phi_k(p))\, : \, p \in B_{1/k}(C) \cap U\}$$
which is a hypersurface of the cylinder $B_1(C) \times [0, 1/k]$. Observe that by (a) $G_k$ can be written as the union of $\(B_{1/(2 k)}(C) \cap U\) \times \{0\}$ (which is totally geodesic in the cylinder) and the graphical annulus 
\bec \lb{def_Ak}
A_k = \left\{(p, \phi_k(p)) \, :\, p\in U \, \mbox{ and } \,  \tfrac{1}{2k} \leq d(p, C) \leq \tfrac{1}{k}\right\}
\eec
whose second fundamental form $h$ is given by
$$h = \frac{\nabla^2 \phi_k}{\sqrt{1 + |\nabla \phi_k|^2}}, \quad \text{where } \ba{l} \nabla \phi_k = k \,\phi'  \nabla f = 2 k \phi' d \nabla d \medskip \\ \nabla^2 \phi_k = k^3 \phi'' \nabla f \otimes \nabla f + k \, \phi' \nabla^2 f\ea$$

We need to estimate the principal curvatures of $A_k$ to prove \eqref{ppal_est}. With such a goal, take $e_1 = \frac{\nabla d}{\sqrt{1 + \<\nabla d, \nabla \phi_k\>^2}}$ and complete to form a basis $\{e_i\}$ orthonormal with respect to the metric induced on the graph $\tilde g = g + \nabla \phi_k \otimes \nabla \phi_k$, and which diagonalizes $h$.
Notice that
\begin{align*}
 \frac{k^3 \phi'' \<\nabla f, e_1\>^2}{\sqrt{1 + (2\,k\,  \phi' \, d)^2}}
\leq \frac{k^2}{2 d} \frac{\phi''}{\phi'}  \frac{\<\nabla f, \nabla d\>^2}{1 + \<\nabla d, \nabla \phi_k\>^2}
  \leq \frac{k^2}{2 d} \frac{\phi''}{(\phi')^3}  \frac1{k^2} \leq \frac{\alpha}{2 d} 
\end{align*}
and
$$\Lambda := \frac{k\, \phi' \nabla^2 f(e_i, e_i)}{\sqrt{1 + (2\,k\,  \phi' \, d)^2}} \leq \frac1{2 d}\nabla^2 f(e_i, e_i) \leq \frac{1}{d},$$
which comes from \eqref{in_f}.
Therefore, from \eqref{def_Ak} we obtain  $\lambda_i \leq \alpha \, k + 2 k =: B k.$

On the other hand, using $\phi'' \geq 0$,  \eqref{in_f} and \eqref{def_Ak}, we have
$$\lambda_i = h(e_i, e_i)  \geq \Lambda  \geq - \lambda \, f^{3/2} \frac1{2 d} = - \frac{\lambda}{2} d^2 \geq -\frac{\lambda}{2 k^2}.$$

All the previous computations are true at almost every point of $A_k$, since Corollary \ref{est_Hesf} ensures that $f$ is $C^{1,1}$ 
on $U$ and thus twice differentiable almost everywhere. At the remaining points all the above estimates are still valid in the support sense (just redo the proof substituting  $f$ by its support functions).

Clearly the hypersurfaces $D_k=D(G_k)$ converge in the Gromov-Hausdorff sense 
to the double 
$D(C)$ of the convex set $C$. 
Employing Lemma~\ref{lem: smoothing}, 
after increasing $b$ and $B$ slightly, we can find a smooth 
hypersurface $\tD_k$ which is
$C^1$ close to $D_k$ such that 
the estimate ~\eqref{ppal_est} remains valid. 
Clearly we can assume that $\tD_k$ still converges to 
$D(C)$. Finally, rename $D_k = \gorro D_k$, and notice that \eqref{conCSC} now 
 follows from ~\eqref{ppal_est} and the Gau{\ss} equations. 
\edem

Observe that each $(D_k, g_k)$ constructed above is not anymore nonnegatively curved, but we have a precise control of its curvature given by \eqref{conCSC}. Using the short time existence theory from \cite{ham3D}, we have the following immediate
\bco \lb{RFsmoothing}
There exists $T_k > 0$ such that $(D_k, g_k(t))$ is a sequence of solutions to the Ricci flow for $t\in [0, T_k)$ starting at the smooth closed manifolds $(D_k, g_k)$ from Proposition \ref{smoothing}.
\eco

\subsection{Curvature estimates for the Ricci flow of our initial sequence of smoothings} \lb{curv_est_sec}
We consider a fixed convex exhaustion $C_\ell=b^{-1}((-\infty,\ell])$ 
as in subsection~\ref{subsec: chgr}. 
For each $C_\ell$ we apply Proposition \ref{smoothing} with $C=C_\ell$, 
 let $( D_{\ell, k}, g_{\ell, k}(t))$ denote the Ricci flow from Corollary \ref{RFsmoothing} and put $g_{\ell, k}=g_{\ell, k}(0)$. Moreover, when a constant $B$ depends on $C_\ell$ we will write $B_\ell$ to denote $B(C_\ell)$. 
 
 Our next concern is to extend the curvature estimates in \eqref{conCSC} at least for a short interval of time, where the important point is that the length of such an interval is independent of $k$.
It is somewhat surprising that we can only prove this
if $\ell$ is large enough. Ultimately this in turn
 is due to the following

\begin{lem}\lb{GK_ap}
Let $( D_{\ell, k}, g_{\ell, k})$ be the closed smooth manifolds constructed in Proposition \ref{smoothing}, and take $p\in  D_{\ell, k}$. Then we can find $r = r(\ell) \in (0, 1]$ (possibly converging to $0$ with $\ell \to \infty$) 
and $\eta_\ell\to 0$  independent of $k$ such that
\bec \lb{Vol_AE}
\frac{\vle_{g_{\ell, k}} \!\(B_{g_{\ell, k}}(p, r)\)}{r^n} \geq (1 - \eta_\ell) \,\omega_n\,\,\, \quad \mbox{ for all $k$.}
\eec
\end{lem}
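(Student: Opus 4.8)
The plan is to exploit the Gromov--Hausdorff convergence $(D_{\ell,k},g_{\ell,k})\to D(C_\ell)$ recorded in Proposition~\ref{smoothing}, together with the fact that the double $D(C_\ell)$, although only an Alexandrov space, has \emph{Euclidean} tangent cones at every point, so that its volume ratios at small scales are as close to $\omega_n$ as we like. First I would note that by \eqref{conCSC} the sectional curvature of $g_{\ell,k}$ is $\ge -\tilde b_\ell/k\ge -\tilde b_\ell$, so the whole family $\{(D_{\ell,k},g_{\ell,k})\}_{k\ge 1}$ together with its limit $D(C_\ell)$ consists of $n$-dimensional Alexandrov spaces with a uniform (in $k$) lower curvature bound; moreover $D(C_\ell)$ itself has curvature $\ge 0$, being the double of the convex set $C_\ell\subset(M,g)$ with $K_g\ge 0$ (Perelman's doubling theorem). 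The essential geometric input is that $C_\ell$ has $C^{1,1}$ boundary -- this is the content of Corollary~\ref{est_Hesf}, which gives $f=d^2(\cdot,C_\ell)\in C^{1,1}$ near $C_\ell$ with bounded Hessian -- so the tangent cone of $C_\ell$ at any boundary point is a half-space, whence the tangent cone of $D(C_\ell)$ at the corresponding edge point is all of $\re^n$; at interior points this is clear as well. Thus the $n$-dimensional density of $D(C_\ell)$ is identically $1$.

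From here, Bishop--Gromov monotonicity on the nonnegatively curved Alexandrov space $D(C_\ell)$ forces $r\mapsto \vle(B(\cdot,r))/(\omega_n r^n)$ to be non-increasing with limit $1$ as $r\to 0^+$; since $D(C_\ell)$ is compact, a routine contradiction/blow-up argument upgrades this to: for every $\eps>0$ there is $r_0=r_0(\ell,\eps)\in(0,1]$ with $\vle(B(q,r_0))\ge(1-\eps)\,\omega_n r_0^n$ for all $q\in D(C_\ell)$, and one may also require $C\tilde b_\ell r_0^2\le\eps$ with $C$ the constant of Lemma~\ref{BG_ref}. Continuity of $\mc H^n$ under collapse-free Gromov--Hausdorff convergence of $n$-dimensional Alexandrov spaces with a uniform lower curvature bound then produces a threshold $K_0=K_0(\ell,\eps)$ so that $\vle_{g_{\ell,k}}(B(p,r_0))\ge(1-2\eps)\,\omega_n r_0^n$ for all $k\ge K_0$ and all $p\in D_{\ell,k}$; passing to any smaller radius $r\le r_0$ via Lemma~\ref{BG_ref} (with $\Lambda^2=\tilde b_\ell/k\le\tilde b_\ell$) keeps the ratio $\ge(1-2\eps)/(1+C\tilde b_\ell r_0^2)\ge 1-4\eps$. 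For the finitely many indices $k<K_0$ the spaces $(D_{\ell,k},g_{\ell,k})$ are fixed closed Riemannian manifolds with $|\Rm_{g_{\ell,k}}|\le\tilde B_\ell k^2\le\tilde B_\ell K_0^2$, so their volume ratios tend to $\omega_n$ uniformly as $r\to 0$; choosing $r=r(\ell)\le r_0$ small enough (depending on $\ell,\eps,K_0$ but \emph{not} on $k$) makes $\vle_{g_{\ell,k}}(B(p,r))\ge(1-4\eps)\,\omega_n r^n$ for those $k$ too. Setting $\eta_\ell:=4\eps$ and simply choosing $\eps=\eps(\ell)$ with $4\eps(\ell)\to 0$ (e.g. $\eps(\ell)=1/(4\ell)$) gives \eqref{Vol_AE}: the requirement $\eta_\ell\to 0$ is automatic, since for each fixed $\ell$ the argument yields an arbitrarily small constant at the cost of shrinking $r(\ell)$.

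The main obstacle is the pair of Alexandrov-geometry inputs: identifying $D(C_\ell)$ as a nonnegatively curved Alexandrov space whose tangent cones are all Euclidean -- the curvature bound is the doubling theorem, but the Euclidean-tangent-cone assertion genuinely uses the $C^{1,1}$-regularity of $\partial C_\ell$ from Corollary~\ref{est_Hesf}, without which an edge point could have a proper convex cone (angle deficit) as tangent cone and the volume ratio would stay bounded away from $\omega_n$ -- and the continuity of volume under the collapse-free convergence $D_{\ell,k}\to D(C_\ell)$, which is where the allowance ``$r(\ell)$ may go to $0$ with $\ell$'' is used, because the scale at which $D(C_\ell)$ (equivalently, $M$ near $C_\ell$, whose curvature is bounded only by an $\ell$-dependent constant) looks Euclidean deteriorates as $\ell\to\infty$. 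A more hands-on variant avoiding the doubling theorem would split a point $p\in D_{\ell,k}$ according to whether $B_{g_{\ell,k}}(p,r)$ meets the thin graphical annulus $A_k$: if not, the ball is isometric to a metric ball in $(M,g)$ and $K_g\ge 0$ plus the $k$-independent curvature bound of $M$ on a neighbourhood of $C_\ell$ gives the estimate directly; if it does, one again relies on proximity to the half-space-doubled model for $k$ large and on bounded curvature for $k$ small.
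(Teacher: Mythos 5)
There is a genuine gap, and it sits exactly at the point you flag as "the essential geometric input". Your argument rests on the assertion that every boundary point of $C_\ell$ has a half-space as tangent cone, which you justify by appealing to the $C^{1,1}$-regularity in Corollary~\ref{est_Hesf}. But that corollary concerns the function $f=d^2(\cdot,C_\ell)$, not the hypersurface $\partial C_\ell$: since $f$ and $\nabla f$ both vanish identically on $C_\ell$, the function $d^2(\cdot,C)$ is $C^{1,1}$ near \emph{any} closed convex set, including one with genuine corners (e.g.\ a square in $\R^2$). The sublevels of the Busemann function do in general have non-smooth boundary --- this is stated explicitly in subsection~\ref{subsec: chgr} and is the whole reason Section~\ref{sec:gral} exists --- and at a corner point $p$ the tangent cone $T_pC_\ell$ is a proper convex cone, so the tangent cone of the double $D(C_\ell)$ at $p$ has an angle deficit and the density is strictly less than $1$. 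Consequently your conclusion that for each \emph{fixed} $\ell$ one can make $\eta_\ell$ arbitrarily small by shrinking $r(\ell)$ is false, and indeed it would contradict the paper's remark in subsection~\ref{curv_est_sec} that the subsequent curvature estimates can only be proved for $\ell$ large, "ultimately due to" this lemma.

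The missing ingredient is Lemma~\ref{GuiKa} (Guijarro--Kapovitch): for $p\in\partial C_\ell$ the tangent cone $T_pC_\ell$ contains every unit vector making angle less than $\tfrac{\pi}{2}-\eps(\ell)$ with a minimal geodesic to the soul, with $\eps(\ell)\to 0$ only as $\ell\to\infty$. This gives $\vle\{v\in T_pC_\ell: |v|<r\}\ge\tfrac12(\omega_n-\eps_\ell)r^n$, i.e.\ tangent cones that are merely \emph{close} to half-spaces with an $\ell$-dependent error, which is why the statement carries an $\eta_\ell$ that tends to $0$ with $\ell$ but cannot be improved for fixed $\ell$. Everything else in your write-up --- the reduction to $D(C_\ell)$ via Gromov--Hausdorff convergence and Cheeger--Colding volume continuity, the passage from a pointwise radius $r(p)$ to a uniform $r(\ell)$ (the paper does this by a finite covering argument rather than Bishop--Gromov on the limit, but either works), and the separate treatment of small $k$ --- is sound and matches the paper's proof; only the identification of the tangent cones needs to be replaced by the Guijarro--Kapovitch estimate.
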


\bdem As the manifolds $( D_{\ell, k}, g_{\ell, k})$ converge 
to the double $D(C_\ell)$ of $C_\ell$ in the Gromov-Hausdorff sense,
the continuity of volumes (see e.g.~\cite[Theorem 5.9]{CheCol} by Cheeger and Colding) gives 
$\lim_{k\to \infty} \vle_{g_{\ell, k}}\! \(B_{g_{\ell, k}} (p_k, r)\) = \mc H^n_{D(C_\ell)}(B(p_\infty, r))$. 
Thus it suffices to prove that small 
balls in $D(C_\ell)$ have nearly Euclidean volume provided that $\ell$ 
is large.

This essentially follows
from Lemma \ref{GuiKa} by  Guijarro and  Kapovitch
which ensures that,  for $p\in \partial C_{\ell}$ with large $\ell$,  $T_p C_\ell$ is close to a half-space, and so $\vle \{v \in T_p C_\ell : |v| < r\} = \frac{1}{2} (\omega_n -  \eps_{\! \ell}) r^n$ with $\eps_\ell \to 0$ as $\ell \to \infty$.
 As $C_\ell$ is a convex set in a Riemannian manifold, we can find 
for each $p \in D(C_\ell)$ a number $r(p)$ small enough
 so that the volume of a geodesic ball $B(p, r)$ in $D(C_\ell)$ is 
$\ge (\omega_n -  2\eps_{\! \ell}) r(p)^n$. 

To remove the dependence on $p$,  we choose 
a finite subcover $\bigcup_{i = 1}^k B(p_i, \eps_{\! \ell} \, r_i)$ of $\bigcup_{p\in D(C_\ell)}B(p,\eps_{\! \ell} \, r(p))$, where $r_i = r(p_i)$ and we take $r_0 = \min_i r_i$. Then any $q \in D(C_\ell)$ is contained in $B(p_i, \eps_{\! \ell} \, r_i)$ for some $i$. Notice that $B(p_i, r_i) \subset B(q, (1 + \eps_{\! \ell}) r_i)$ and thus $\vle \(B(q, (1 + \eps_{\! \ell}) r_i)\) \geq (\omega_n -  2\eps_{\! \ell}) r_i^n$. Finally, apply volume comparison to get $\frac{\vle \(B(q, r_0)\)}{r_0^n} \geq \frac{\omega_n -  2\eps_{\! \ell}}{(1 + \eps_{\!\ell})^n}$. 
\edem

\bp \lb{prop:curv_est}
There exists some $\ell_0 >0$ and for each $\ell\geq \ell_0$ 
exists a time $T_\ell > 0$ (independent of $k$) such that for the Ricci flow $(D_{\ell, k}, g_{\ell, k}(t))$ constructed in Corollary \ref{RFsmoothing} we have 
\bec \lb{curv_est}
K^\ce_{g_{\ell, k}(t)} \geq - \frac{1}{\sqrt{k}} \qquad \text{and} \qquad |\Rm|_{g_{\ell, k}(t)} \leq \frac1{t}
\eec
for all  $t\in (0, T_\ell]$ 
and all sufficiently large $k$.
\end{prop}

\bdem
Unless otherwise stated, all the curvature quantities hereafter correspond to $g_{\ell, k}(t)$. We consider 
a maximal solution $(D_{\ell, k}, g_{\ell, k}(t))$ of the Ricci 
flow with $t\in [0,T_{\ell, k})$.
By \eqref{conCSC} there is some constant $B_\ell$ such that 
\begin{equation}\label{initial}
K^\C(0)\ge -\tfrac{B_{\ell}}{k},\qquad \mbox{ and } \qquad  |\Rm(0)|\le B_{\ell}\,k^2.
\end{equation}
Henceforth we will restrict our attention to $k\ge 4B_{\ell}^2$.

We define $t_{\ell, k}$ as the minimal time for which we can find  some complex plane $\sigma$ in $T^\C D_{\ell, k}$ with
 $K^{\C}(t_{\ell, k})(\sigma)=-\tfrac{1}{\sqrt{k}}$.
If such a time does not exist, we put $t_{\ell, k}=T_{\ell, k}$. In particular, we have for the usual sectional curvature
\begin{eqnarray}\label{lower sec}
 K(t)\ge -\tfrac{1}{\sqrt{k}} \qquad \mbox{ for all \qquad $t\in [0,t_{\ell, k}]$.}
\end{eqnarray}

We put 
\[
 u(t):= 4n(|\Rm(t)|+1).
\]
Using the initial estimate~\eqref{initial}
it is not hard to obtain a doubling estimate 
for $u(t)$. In fact, the application of \cite[Lemma 6.1]{CLN} gives 
\begin{equation}\label{u doubling}
 u(t)\le L_\ell \, k^2 \qquad \mbox{ for all \qquad $t\in \bigl[0,\tfrac{1}{L_{\ell}k^2}\bigr]$ }
\end{equation}
for some positive constant $L_\ell$.

By Lemma~\ref{GK_ap}, for any $\delta>0$ we can find  
an $\ell_0$ such that for each $\ell\ge \ell_0$ 
there is $r=r(\ell)$ with
$\vol_{g(0)}(B_{g(0)}(p,r))\ge (1-\delta)\, r^n \omega_n$. 
Combining this with   \eqref{lower sec} and 
Proposition~\ref{curv_est_NN} we deduce that, for $\ell\ge \ell_0$,
 there is some 
$\bar t_{\ell}$ and
$k_0=k_0(\ell)$ such that 
\begin{equation}\label{decay u}
 u(t)\le \frac{1}{10 t} \quad \mbox{ for all \quad $t\in [0,t_{\ell, k}]\cap [0,\bar t_\ell]$ \quad and all \quad $k\ge k_0$.}
\end{equation}
Thus the inequalities~\eqref{curv_est} hold 
for $t\in [0,t_{\ell, k}]\cap [0,\bar t_\ell]$
and  it suffices to 
check that $t_{\ell, k}$ is bounded away from $0$ for $k\to \infty$.
In particular, it is enough to consider hereafter $k\ge k_0$ 
with $t_{\ell, k}<\min\{T_{\ell, k},\bar{t_\ell}\}$.

In order to get a lower bound on $t_{\ell, k}$
we have to estimate $K^\C(t)$ from below.
Consider the algebraic curvature operator $\widetilde \Rm := \Rm + \lambda(t) I$, where $I_{ijkl} = \delta_{ik} \delta_{jl} - \delta_{il} \delta_{jk}$ represents the curvature operator of the standard  unit sphere, and $\lambda(t) \geq 0$. Under the Ricci flow, $\widetilde \Rm$ evolves according to
$$\(\parcial{}{t} - \Delta\) \widetilde \Rm = \lambda'(t) I + 2(\Rm^2 + \Rm^\sharp).$$
Next, recall the formula (cf. \cite[Lemma 2.1]{BW})
$$(\Rm + \lambda I)^2 + (\Rm + \lambda I)^\sharp = \Rm^2 + \Rm^\sharp + 2 \lambda {\rm Ric} \wedge {\rm id} + \lambda^2 (n - 1) \, I.
$$
It is easy to see that
$$2{\rm Ric} \wedge {\rm id} + \lambda (n - 1) \, I\le \tfrac{u(t)}{2} I
$$
holds provided that $\lambda\le 1$.  We reach
\begin{align*}
\(\parcial{}{t} - \Delta\) \widetilde \Rm & \geq 2 \big(\widetilde \Rm^2 + \widetilde \Rm^\sharp\big)
  + \big[\l'(t) - \l(t) u(t)\!\big] I.
\end{align*}
As the ODE $\Rm'=\Rm^2+\Rm^\#$ preserves $K^\ce \geq 0$,
we can use the maximum principle to ensure that: 
If we define $\lambda(t)$  as the solution of the initial value problem
$$\ba{l}\l'(t) = \l(t) u(t) 
\medskip \\  \l(0) = \tfrac{B_\ell}{k} \quad (\text{from } \eqref{initial}) \ea,$$
it follows that $\widetilde \Rm(t)$ has nonnegative complex sectional curvature for
$t\in [0,t_{\ell,k}]$.
Hence 
$$K^\ce(t) \geq -\tfrac{B_\ell}{k} e^{\int_0^tu(\tau) d\tau} 
\hspace*{1em}\mbox{ for all $t\in [0, t_{\ell, k}]$.}$$
Combining this with \eqref{u doubling} and \eqref{decay u} we 
deduce
\begin{eqnarray*}
K^\ce(t)&\geq& -e \,\tfrac{B_\ell}{k}\hspace*{5.5em} \mbox{ for $t\in \bigl[0,\tfrac{1}{k^2L_\ell}\bigr]\cap [0,t_{\ell, k}]$, \quad  and}\\
K^\ce(t)&\geq& -e \, \tfrac{B_\ell}{k}\, \bigl(L_\ell k^2 t\bigr)^{\frac{1}{10}}
\hspace*{1em} \mbox{ for $t\in \bigl[\tfrac{1}{k^2L_\ell},t_{\ell, k}\bigr].$}
\end{eqnarray*}
Using that by construction the minimum of 
$K^\ce(t_{\ell, k})$ is given by $-\tfrac{1}{\sqrt{k}}$, we obtain
 the desired uniform lower bound on $t_{\ell, k}$. 

\edem

\subsection{Reduction to the positively curved case}
We have the following improvement of Proposition~\ref{IniAp}.
\begin{prop}\lb{reduction}
Let $(M^n, g)$ be an open manifold with $K^\ce_g \geq 0$ 
whose soul is a point $p_0$. 
Then there is a sequence of closed manifolds $(M_i,g_i,p_0)$
with $K^\ce_{g_i} > 0$ 
converging in the Cheeger-Gromov sense 
to $(M,g,p_0)$.
\end{prop}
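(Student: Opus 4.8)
The plan is to mimic Proposition~\ref{IniAp} but with the sublevel sets $C_\ell=b^{-1}((-\infty,\ell])$ of the Busemann function in place of the smooth convex exhaustion: smooth the doublings $D(C_\ell)$ as in Proposition~\ref{smoothing}, flow the resulting smoothings, let the number of smoothing steps go to infinity to produce an honest Ricci flow on $D(C_\ell)$, use Proposition~\ref{hol} to make it \emph{instantaneously} positively curved, and only then let $\ell\to\infty$. Note that since the soul of $M$ is a point, $M$ is diffeomorphic to $\R^n$, so every $C_\ell$ is a compact convex body homeomorphic to a disk and every $D(C_\ell)$ is a topological $n$-sphere; this is exactly what allows us to invoke Proposition~\ref{hol} later.

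Fix $\ell\ge\ell_0$ with $\ell_0$ as in Proposition~\ref{prop:curv_est}, and let $(D_{\ell,k},g_{\ell,k}(t))$ be the Ricci flows from Corollary~\ref{RFsmoothing}. By Proposition~\ref{prop:curv_est} there is $T_\ell>0$, independent of $k$, such that for all large $k$ one has $K^\ce_{g_{\ell,k}(t)}\ge-1/\sqrt k$ and $|\Rm|_{g_{\ell,k}(t)}\le 1/t$ on $(0,T_\ell]$. The bound $|\Rm|\le1/t$ makes the time-distortion of distances integrable down to $t=0$, so $\diam(D_{\ell,k},g_{\ell,k}(t))$ stays uniformly bounded on $[0,T_\ell]$; and Lemma~\ref{GK_ap} together with Petrunin's estimate (Theorem~\ref{Pet}), exactly as in Corollary~\ref{Cor_vol}, keeps the volume of balls of a fixed radius $r(\ell)$ bounded below along the flow, uniformly in $k$. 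Combined with the sectional lower bound this yields a uniform injectivity radius bound via Theorem~\ref{ChGrT}, so Hamilton's compactness theorem applies on each $[t_0,T_\ell]$ with $t_0>0$; a diagonal argument over $t_0\to0$ produces a smooth limit Ricci flow $(\bar M_\ell,\bar g_\ell(t),p_0)_{t\in(0,T_\ell]}$ with $\bar M_\ell$ \emph{closed}, diffeomorphic to $D_{\ell,k}$ for large $k$ — hence a topological sphere — and with $K^\ce_{\bar g_\ell(t)}\ge0$, since $-1/\sqrt k\to0$. As $t\to0$ this flow converges in the Gromov--Hausdorff sense to $D(C_\ell)$.

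Now fix any $t_0\in(0,T_\ell)$: since $\bar M_\ell$ is a topological sphere and $K^\ce_{\bar g_\ell(t_0)}\ge0$, Proposition~\ref{hol} applied to $s\mapsto\bar g_\ell(t_0+s)$ gives $K^\ce_{\bar g_\ell(t_0+s)}>0$ for $s>0$; letting $t_0\to0$ we obtain $K^\ce_{\bar g_\ell(t)}>0$ for \emph{all} $t\in(0,T_\ell]$. On the other hand, near $p_0$ the singular initial metric of the doubling is literally $g$, because the smoothing of Proposition~\ref{smoothing} only modifies a collar of $\partial C_\ell$; hence, once $\ell$ is large enough that $B_g(p_0,R+1)\subset\Int C_\ell$, the local Ricci-flow and Shi estimates (Theorems~\ref{SURF} and~\ref{ModShi}), used just as in the proof of Theorem~\ref{mainT_pos}, give $C^m$-bounds for $g_{\ell,k}(t)$ on $B_g(p_0,R)$ and $t\in[0,T'_\ell]$, uniformly in $k$ and in $\ell$. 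Passing to the limit $k\to\infty$ and then letting $t\to0$, $\bar g_\ell(t)\to g$ in $C^m(B_g(p_0,R))$. Finally, choose $\ell_i\to\infty$ and $t_i\in(0,T_{\ell_i}]$ with $t_i\to0$ so fast that $\bar g_{\ell_i}(t_i)$ is $1/i$-close to $g$ in $C^i(B_g(p_0,i))$, and put $(M_i,g_i,p_0):=(\bar M_{\ell_i},\bar g_{\ell_i}(t_i),p_0)$; this is the desired sequence.

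I expect the crux to be the step $k\to\infty$: one must certify that the Cheeger--Gromov limit $\bar M_\ell$ is again a \emph{closed} manifold of the same topological type, since this is precisely what makes Proposition~\ref{hol} applicable and upgrades $K^\ce\ge0$ to $K^\ce>0$. This rests on the uniform-in-$k$ non-collapsing (Lemma~\ref{GK_ap} plus Petrunin) and the uniform diameter bound (from $|\Rm|\le1/t$). A secondary technical point is keeping the limiting flow controlled near $p_0$ all the way down to $t=0$, which forces us to use the local estimate Theorem~\ref{SURF} rather than the global bound $|\Rm|\le1/t$, which degenerates as $t\to0$.
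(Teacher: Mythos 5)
Your proposal is correct and follows essentially the same route as the paper: take the flows $(D_{\ell,k},g_{\ell,k}(t))$ from Corollary~\ref{RFsmoothing} with the uniform estimates of Proposition~\ref{prop:curv_est}, use Lemma~\ref{GK_ap} plus Petrunin's estimate and Theorem~\ref{ChGrT} to get non-collapsing and injectivity radius bounds, pass to a closed limit flow via Hamilton's compactness, identify it as a topological sphere so that Proposition~\ref{hol} upgrades $K^\ce\ge0$ to $K^\ce>0$, control the metric near $p_0$ down to $t=0$ via Theorems~\ref{SURF} and~\ref{ModShi}, and finish with a diagonal choice of $\ell_i, t_i$. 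The only cosmetic difference is that you certify the sphere topology of the limit via the Cheeger--Gromov diffeomorphisms at a fixed positive time, whereas the paper argues via the Gromov--Hausdorff convergence to $D(C_\ell)$ as $t\to0$ following Simon; both are valid.
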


\bdem
Consider the sets $C_\ell =\{b\leq \ell\}$ from 
subsection~\ref{subsec: chgr}.
 Summing up, from Proposition \ref{smoothing} and Corollary \ref{RFsmoothing} we have  a sequence $(D_{\ell, k}, g_{\ell, k}(t))$ of Ricci flows satisfying \eqref{curv_est} on $(0, T_\ell]$ for all $\ell \geq \ell_0$.
Using Petrunin's result (Theorem \ref{Pet}) similar to the proof of Proposition \ref{curv_est_NN}, we see that the volume estimate \eqref{Vol_AE} 
 remains valid for $(D_{\ell, k}, g_{\ell, k}(t))$ 
provided we double the constant $\eta_\ell$ and we 
assume $t\in [0,t_\ell]$. 
This, combined with \eqref{curv_est}, allows to apply  Theorem \ref{ChGrT} by Cheeger, Gromov and Taylor to reach a uniform lower bound for 
the injectivity radius 
${\rm inj}_{g_{\ell,k}(\bar t)}\ge c(\ell, \bar t)$, for any $\bar t \in (0, t_\ell]$. 

Then, we can apply Hamilton's compactness (Theorem \ref{locHCT}) to get a compact limiting Ricci flow 
$(D_{\ell, \infty}, g_{\ell, \infty}(t))$ on $(0, T_\ell]$
 with $K^\ce_{g_{\ell, \infty}(t)} \geq 0$. Arguing e.g.~as in \cite[Theorem 9.2]{Simon}, we deduce that $(D_{\ell, \infty}, d_{g_{\ell, \infty}(t)})$ converges (in the Gromov-Hausdorff sense as $t \to 0$) to $(D(C_\ell), d_{g_\ell})$; in particular, $ D_{\ell, \infty}$ is homeomorphic to 
the sphere $D(C_\ell)$. By Proposition~\ref{hol} 
$ K^\ce_{g_{\ell, \infty}(t)} > 0$ for all $t\in (0,T_{\ell}]$.

On the other hand, for any $\eps>0$ we can view $C_{\ell-\eps}$ 
as a subset of $D_{\ell, k}$ for all $k\in \N\cup \infty$. 
Combining Theorem~\ref{SURF}  and a generalization of Shi's estimate
(see
Theorem~\ref{ModShi}) with \eqref{curv_est}
we see that on $C_{\ell-\eps}$ the metric $g_{\ell, \infty}(t)$ 
converges for $t\to 0$ in the $C^\infty$ topology 
to $g$. We choose $t_\ell$ 
so close to zero that $(C_{\ell-\eps}, g_{\ell, \infty}(t_\ell))$ 
converges in the $C^\infty$ topology to $(M,g,p_0)$
as $\ell\to \infty$.
\edem

If the soul of the manifold of Theorem \ref{mainT} is a point,
one can now deduce the conclusion of Theorem \ref{mainT} 
completely analogously to Section~\ref{sec:pos-case}
using  Proposition~\ref{reduction} in place of Proposition~\ref{IniAp}.

\begin{proof}[\bf Proof of Theorem \ref{mainT}] If $M$ is not simply connected, we consider its universal cover $\gorro M$. 
The goal is to construct a Ricci flow $(\gorro M,g(t))$ 
on $\gorro M$ for which each $g(t)$ is invariant under 
$\Iso(\gorro M,g(0))$. 
By Theorem \ref{splitting}, $\gorro M$ splits isometrically as $\Sigma^k \times F$, where $\Sigma$ is closed and $F$ is diffeomorphic to $\re^{n - k}$ with $K^\ce_{g_F} \geq 0$. 
By \cite[Corollary 6.2]{ChGr} of Cheeger and Gromoll 
$F$ splits isometrically as 
$F=\R^q\times F'$ where $\R^q$ is flat and $F'$ has a compact isometry group. 
Clearly there is a Ricci flow 
 $(\R^q\times \Sigma,g(t))$ which is invariant under $\Iso(\R^q\times \Sigma)$ 
and thus it suffices to find a Ricci flow $(F',g(t))$  
 which is invariant under $\Iso(F')$.  
Using \cite[Corollary 6.3]{ChGr} by Cheeger and Gromoll, we can find 
$o\in F'$ which is a fixed point of $\Iso(F')$.
We now define the Busemann function 
on $F'$ with respect to this base point. 
Then all sublevel sets $C_{\ell}$, the doubles $D(C_\ell)$ and 
the smoothings of the double $D_{\ell,k}$ 
come with a natural isometric action of $\Iso(F')$. 

Since the Ricci flow on compact Riemannian manifolds is 
unique, the Ricci flow $(D_{\ell,k},g_{\ell,k}(t))$ 
is invariant under $\Iso(F')$; hence the same 
holds for the limit $(D_{\ell,\infty},g_{\ell,\infty}(t))$,
and finally for the limiting Ricci flow on $F'$. 

In summary, there is a Ricci flow  $(\gorro M,g(t))$ 
 with $K^\ce_{g(t)} \geq 0$ which 
is invariant under $\Iso(\gorro M,g(0))$ and so descends 
to a solution on $M$.
\end{proof}

\section{Applications}\label{sec: applications}
 \subsection{Proof of Corollary \ref{cor: noncollapsed}} 
Arguing as before, it is enough to consider the case where the soul is a point. Redoing the arguments from the proof of Proposition \ref{LBT} and using \eqref{non_col_hyp}, we deduce that our Ricci flow exists until time $\mc T = \frac{v_0}{2 C_n}$. Plugging this and \eqref{non_col_hyp} into a reasoning like in Corollary \ref{Cor_vol}, we reach 
\bec \lb{vol_est_cor3}
\frac{\vle_{g(t)} \(B_{g(t)}(p, r)\)}{r^n} \geq \frac{v_0}{2} > 0  \qquad \text{for } r\in (0, 1], \quad p\in M, \quad t\in [0, \mc T].
\eec
 
 Now assume that the claim about bounded curvature does not hold, i.e., there exists a sequence of Ricci flows $(M_i, g_i(t))$ constructed as in Theorem \ref{mainT} (in particular, $K^\ce_{g_i(t)} \geq 0$ and $(M_i, g_i(t))$ satisfies a trace Harnack inequality) and points $(p_i, t_i) \in M_i \times (0, \mc T)$ with $\scal_{g_i(t_i)} > 4^i/t_i$.  By means of a point picking argument as in the proof of Proposition \ref{Int_BC} on the relatively compact set $B_{g_i(t_i)}(p_i, 1)$, we get a sequence of points $\{\bar p_i\}_{i\geq i_0}$ such that, after parabolic rescaling of the metric with factor $Q_i= \scal_{g_i(t_i)}(\bar p_i)$, we get for the rescaled metric $\tilde g_i(s)$
$$|\Rm|_{\tilde g_i(s)} \leq 8 \qquad \text{on } \quad  B_{\tilde g_i(0)}(\bar p_i, i)  \quad \text{ for } s \in [-i, 0].$$

By the scaling invariance of \eqref{vol_est_cor3}, the corresponding estimate holds with $B_{\tilde g_i(0)}(\bar p_i, r)$ for any $0 < r \leq \sqrt{Q_i}$.
The rest of the proof goes exactly as the remaining steps in the proof of Proposition \ref{Int_BC}.

\subsection{Estimates for the extinction time}
We first need a scale invariant version of Petrunin's  estimate (Theorem \ref{Pet}).
\begin{lem}\lb{Pet_si}
Let $(M^n, g)$ be an open manifold with $K_g \geq 0$. Then for any $p\in M$ and $r > 0$, there exists a constant $C_n > 0$ such that
$$\int_{B_g(p, r)} \scal_g \, d\mu_g \leq C_n \,r^{n -2}.$$
\end{lem}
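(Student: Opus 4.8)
The plan is to deduce this from Petrunin's estimate (Theorem~\ref{Pet}) by a straightforward parabolic... rather, a straightforward homothetic rescaling. Fix $p\in M$ and $r>0$, and set $\tilde g := r^{-2} g$. Since the homothety multiplies the sectional curvature by $r^2$, the hypothesis $K_g\ge 0$ gives $K_{\tilde g}\ge 0\ge -1$, so $(M,\tilde g)$ is a complete manifold to which Theorem~\ref{Pet} applies at the point $p$.

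Next I would track how the three quantities in the integral transform. The metric ball is unchanged as a set up to the radius: $B_{\tilde g}(p,1)=B_g(p,r)$. The scalar curvature scales as $\scal_{\tilde g}=r^2\scal_g$, and the Riemannian volume element scales as $d\mu_{\tilde g}=r^{-n}\,d\mu_g$. Hence
\[
\int_{B_g(p,r)}\scal_g\,d\mu_g
= r^{n-2}\int_{B_{\tilde g}(p,1)}\scal_{\tilde g}\,d\mu_{\tilde g}
\le C_n\, r^{n-2},
\]
where the last inequality is exactly Theorem~\ref{Pet} applied to $(M,\tilde g)$, with the same dimensional constant $C_n$.

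There is essentially no obstacle here: the only point to be a little careful about is that Petrunin's theorem requires merely a lower curvature bound $K\ge -1$ and completeness, both of which survive (indeed improve) under the rescaling since $K_g\ge 0$; no upper curvature bound or noncollapsing is needed. One should also note that the integrand $\scal_g$ is nonnegative because $K_g\ge 0$, so the integral is well defined in $[0,\infty]$ and the estimate shows it is finite.
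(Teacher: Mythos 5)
Your argument is correct and is exactly the paper's proof: rescale to $\tilde g=r^{-2}g$, note $K_{\tilde g}\ge 0$, and apply Petrunin's estimate on the unit ball, using the identities $B_{\tilde g}(p,1)=B_g(p,r)$, $\scal_{\tilde g}=r^2\scal_g$ and $d\mu_{\tilde g}=r^{-n}d\mu_g$. No issues.
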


\bdem
For any $r > 0$, consider the rescaled metric $\tilde g = \frac1{r^2} g$. Since $K_{\tilde g} = \frac1{r^2} K_g \geq 0$, we are in position to apply Theorem \ref{Pet} to $(M, \tilde g)$ which gives
$$C_n \geq \int_{B_{\tilde g}(p, 1)}  \scal_{\tilde g}\, d\mu_{\tilde g} = \int_{B_g(p, r)} r^{2 - n} \scal_g \, d\mu_g,$$
where for the last equality we have used the identities $d\mu_{\tilde g} = r^{-n} \, d\mu_g$, $\scal_{\tilde g} = r^2 \scal_g$ and $B_{\tilde g}(p, 1) = B_g(p, r)$. 
\edem

\begin{lem}\label{vol collapse} Suppose $(M^n,g(t))_{t\in [0,T)}$ is a maximal solution of the Ricci flow with $K^\ce_{g(t)} \geq 0$. 
If $T<\infty$, then 
\[
 \limsup_{t\to T}\,\,\sup\Bigl\{\tfrac{\vol_{g(t)}(B_{g(t)}(p,r))}{r^{n-2}}\mid p\in M,r>0\Bigr\}=0.
\]
\end{lem}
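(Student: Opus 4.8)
The plan is to deduce this directly from Corollary~\ref{thm: immortal} by restarting the flow near the singular time. First I would fix $t\in(0,T)$ and consider the shifted flow $g_t(s):=g(t+s)$, $s\in[0,T-t)$. The key observation is that $(M,g_t(s))_{s\in[0,T-t)}$ is again a complete maximal solution of the Ricci flow with $K^\ce_{g_t(s)}\ge 0$ and initial metric $g_t(0)=g(t)$, and that its maximal existence time is exactly $T-t$: it cannot exceed $T-t$, for otherwise concatenating with $g|_{[0,t]}$ would extend $g$ past $T$, contradicting maximality; and it is at least $T-t$ since $g$ is defined on all of $[0,T)$.

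With this in hand I would apply Corollary~\ref{thm: immortal} to the complete manifold $(M,g(t))$ (note $K^\ce_{g(t)}\ge 0$) together with the maximal solution $g_t$, which yields
\[
\eps(n)\cdot\sup\Bigl\{\tfrac{\vol_{g(t)}(B_{g(t)}(p,r))}{r^{n-2}}\bigm| p\in M,\ r>0\Bigr\}\ \le\ T-t .
\]
In particular the supremum on the left is finite for every $t>0$. Dividing by $\eps(n)>0$ and letting $t\to T$, the right-hand side tends to $0$; since the supremum is a nonnegative quantity, this forces
\[
\limsup_{t\to T}\ \sup\Bigl\{\tfrac{\vol_{g(t)}(B_{g(t)}(p,r))}{r^{n-2}}\bigm| p\in M,\ r>0\Bigr\}=0,
\]
which is the assertion.

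The proof has essentially no hard step: the only points requiring (routine) care are that nonnegativity of $K^\ce$ is preserved along the flow, which is Remark~\ref{RemPCSC}(d), and that completeness is preserved forward in time, so that $g(t)$ is genuinely a complete metric to which Corollary~\ref{thm: immortal} applies — this follows from Lemma~\ref{lem: complete2}. The main (minor) obstacle is the bookkeeping that the restarted solution is still \emph{maximal} with existence time precisely $T-t$, which is exactly where the hypothesis $T<\infty$ is used.
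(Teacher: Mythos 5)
Your argument is circular. You derive the lemma from Corollary~\ref{thm: immortal}, but in the paper Corollary~\ref{thm: immortal} is itself proved \emph{using} Lemma~\ref{vol collapse}: the corollary's proof picks a ball with $\vol_{g(0)}(B_{g(0)}(p,r))>C_nTr^{n-2}$, propagates the volume lower bound forward in time via Petrunin's estimate, and then says ``Combining with Lemma~\ref{vol collapse} this gives a contradiction.'' Without the lemma there is nothing to contradict: knowing that some ratio $\vol_{g(t)}(B_{g(t)}(p,r))/r^{n-2}$ stays bounded below as $t\to T$ is only an obstruction to $T$ being small because the lemma asserts that such non-collapsing is impossible at a finite maximal time. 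So the deduction you propose, while formally clean (your bookkeeping that the restarted flow is maximal with lifespan exactly $T-t$ is fine), cannot serve as a proof of the lemma unless you supply an independent proof of the corollary --- and any such proof would have to contain the real work of the lemma anyway.

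That real work is the following. Suppose the conclusion fails, so there are $v_0>0$, points $x_j$, radii $r_j$ and times $t_j\to T$ with $\vol_{g(t_j)}(B_{g(t_j)}(x_j,r_j))\ge v_0 r_j^{n-2}$. Fix one such triple $(\bar x,\bar t,\bar r)$ with $T-\bar t\le \tfrac{v_0}{2C_n}$. The scale-invariant Petrunin estimate (Lemma~\ref{Pet_si}) shows the volume lower bound persists: $\vol_{g(t)}(B_{g(t)}(\bar x,\bar r))\ge \tfrac{v_0}{2}\bar r^{n-2}$ for all $t\in[\bar t,T)$. This non-collapsing, via the point-picking argument of Proposition~\ref{Int_BC}, yields uniform curvature bounds $|\Rm_{g(t)}|\le C_D$ on $B_{g(t)}(\bar x,D)$ for $t\in[\bar t,T)$, and Shi-type estimates give bounds on all derivatives. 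Hence $g(t)$ converges smoothly as $t\to T$ to a limit metric $g(T)$, which is complete by Lemma~\ref{lem: complete2}, and Theorem~\ref{mainT} then restarts the flow from $g(T)$, contradicting maximality. This extension-past-$T$ step is the essential content that your proposal bypasses.
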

\begin{proof}
We assume on the contrary that we can find $v_0>0$, $x_j \in M$, 
$t_j\to T$ and $r_j>0$ satisfying $\vol_{g(t_j)}(B_{g(t_j)}(x_j,r_j))\ge v_0r_j^{n-2}$. 
We fix some $(\bar x, \bar t,\bar r)=(x_{j_0},t_{j_0},r_{j_0})$ 
with $(T-\bar t)\le \tfrac{v_0}{2C_n}$, where 
$C_n$ is the constant in Lemma~\ref{Pet_si}.

Now we can use Petrunin's result 
 as in Lemma~\ref{Pet_si}
in order to estimate
\begin{eqnarray*}
\vol_{g(t)}(B_{g(t)}(\bar p,\bar r))&\ge&\vol_{g(t)}(B_{g(\bar t)}(\bar p,\bar r))\\ &\ge& (v_0- C_n(t-\bar t))\bar r^{n-2}\ge \tfrac{v_0}{2} \bar r^{n-2}\hspace*{1em}\mbox{ for $t\in [\bar t,\mc T)$}.
\end{eqnarray*}

This in turn allows us to prove, similarly to Proposition~\ref{Int_BC},
 that for each $D$ there is a $C_D$ 
with 
\[
|\Rm_{g(t)}|\le C_D \qquad \mbox{ on $B_{g(t)}(\bar p,D)$, \qquad $t\in \bigl[\bar t, \mc T\bigr)$.}
\]
As in the proof of Theorem~\ref{mainT_pos} 
we get also bounds on the derivatives of $\Rm_{g(t)}$. 
This in turn shows that $g(t)$ converges smoothly 
to a smooth limit metric $g(\mc T)$. 
By Lemma~\ref{lem: complete2} $g(\mc T)$ is complete 
and thus we can extend the Ricci flow by applying Theorem~\ref{mainT} 
to $(M,g(\mc T))$ -- a contradiction.
\end{proof}

\begin{proof}[\bf Proof of Corollary~\ref{thm: immortal}.]
Consider a maximal Ricci flow $(M,g(t))_{t\in [0,T)}$ with $K^\C\ge 0$
and suppose on the contrary that 
\[
T<\tfrac{1}{C_n}\sup\Bigr\{\tfrac{\vol_{g(0)}\(B_{g(0)}(p,r)\)}{r^{n-2}}\mid p\in M, r>0\Bigr\},
\]
where $C_n$ is the constant from Lemma~\ref{Pet_si}.

By assumption we can choose $ r>0$ 
and $p\in M$ with \[\vol_{g(0)}\bigl(B_{g(0)}(p,r)\bigr)> C_nT r^{n-2}.\] 
Using Petrunin's estimate (as restated in Lemma~\ref{Pet_si}) we deduce 
$$
\vol_{g(t)}\(B_{g(t)}(p,r)\)\ge \vol_{g(t)}\(B_{g(0)}(p,r)\)> C_n (T- \, t)\, r^{n-2}.
$$ 
Combining with Lemma~\ref{vol collapse} this gives a contradiction.

\end{proof}

\begin{proof}[\bf Proof of Corollary~\ref{thm: immortal3}]
Any open nonnegatively curved manifold 
with a two dimensional soul $\Sigma$ 
is locally isometric to $\Sigma\times \R$ 
and the Ricci flow exists exactly until 
$\tfrac{\mathrm{area}(\Sigma)}{4\pi\chi(\Sigma)}\in (0,\infty]$. 
If $\dim(\Sigma) = 1$, then 
the universal cover splits off a line and 
Corollary~\ref{thm: immortal} ensures the existence of an immortal solution. 
So it only remains to consider the case that the soul is a point.

If $T < \infty$, using Corollary~\ref{thm: immortal} we know
that 
\[
\limsup_{r\to \infty} \tfrac{\vol_{g}(B_{g}(p,r))}{r}= L < \infty.
\]
By Lemma \ref{Drees} there is a sequence 
$\eta_i\searrow 1$ such that for a base point $o\in M$ 
the following holds 
\[
B_{g}(o,i)\subset C_{i}\subset B_{g}(o,i\eta_i)\qquad \mbox{for all \quad$i\ge 1$,}
\] 
where $C_i$ is the sublevel set $b^{-1}((-\infty,i])$ 
 of the Busemann function at $o$ (see subsection~\ref{subsec: chgr}). 
In particular, $\tfrac{\vol_{g}(C_i)}{\vol_{g}(B_{g}(o,i))}\to 1$.

Clearly $\vol(C_i)=\vol(C_0)+\int_0^i\mathrm{area}(\partial C_t)\,dt.$
Moreover, by work of Sharafutdinov (see e.g.~\cite[Theorem 2.3]{Yim2})
there is an $1$-Lipschitz map $\partial C_b \fle \partial C_a$
 for $a\le b$. Accordingly, 
the area of $\partial C_i$ is monotonously increasing and 
thus 
\[
0 < \lim_{r\to \infty} 
\mathrm{area}(\partial C_r) = \lim_{r\to \infty} \tfrac{\vol_{g}(B_{g}(p,r))}{r}=L.
\]
This yields that $D := \lim_{r\to\infty}\diam(\partial C_r)< \infty$. 
In fact, suppose for a moment $D =\infty$. 
Choose $a>0$ so large that $\mathrm{area}(\partial C_a)\ge \tfrac{3}{4}L$. 
Since $\diam(\partial C_r)$ tends to infinity while the area converges to $L$,
we can find for each $\eps>0$ an $r$ and a circle of length $\le \eps$
in $\partial C_r$ which subdivides $\partial C_r$ into 
two regions of equal area. If we consider the image of this 
circle under the 1-Lipschitz map 
$ \partial C_r\rightarrow \partial C_a$ (for $r \geq a$), we get a closed curve
of length $\le \eps$ which subdivides $\partial C_a$ in two regions such 
that each of them has area at least $L/4$. 
Since $\eps$ was arbitrary this gives a contradiction.

As it is the boundary of a totally convex set, $\partial C_r$ 
is a nonnegatively curved Alexandrov space (cf.~Buyalo \cite{Buy}). Combining compactness and Sharafutdinov retraction,  $\partial C_r$ converges for $r\to \infty$ 
to a nonnegatively curved Alexandrov space $S$.
Moreover, for any sequence $p_i\in M$ converging to 
infinity we have 
$\lim_{GH,i\to \infty}(M,g,p_i)\to S\times \R$. 
Thus $M$ is asymptotically cylindrical. 

In particular, $M$ is volume non-collapsed, and from Corollary~\ref{cor: noncollapsed} we deduce 
that $(M,g(t))$ has bounded curvature $\le \tfrac{C}{t}$ for positive 
times. 
It is now easy to extract from the sequence 
$(M,g(t),p_i)$ a subsequence converging to $(N,g_\infty(t))$. 
Topologically the nonnegatively curved manifold 
$N$ is homeomorphic to $S\times \R$ -- a manifold
with two ends. Thus $(N,g_\infty(t))$ splits 
isometrically as $(\Sph^2,\bar g(t))\times \R$. 

From  Lemma~\ref{vol collapse} one can deduce that 
$\lim_{t\to T}\vol_{\bar g(t)}(\Sph^2)=0$. 
By Gau{\ss} Bonnet $\lim_{t\to 0}\vol_{\bar g(t)}(\Sph^2)=8\pi T=L$.

\end{proof}

\begin{rem}\label{rem: euclidean} Let $(M,g)$ be an open manifold with $K^\ce_g \geq 0$ and Euclidean volume growth.
By Corollary~\ref{cor: noncollapsed} 
the curvature of our Ricci flow $g(t)$ starting on $(M, g)$ is bounded for positive times. 
Following the work of Schulze and Simon \cite{SS}, with Hamilton's 
Harnack inequality replaced by \cite{BreHar},
one can show  
that there is a sequence of positive numbers $c_i\to 0$ 
 such that $\lim_{i\to\infty} (M,c_ig(t/c_i))=(M,g_\infty(t))$ 
is a  Ricci flow ($t\in (0,\infty)$) whose \lq initial  metric'
(Gromov Hausdorff limit of $(M,d_{g_\infty(t)})$ for $t\to 0$)
is the cone at infinity of $(M,g)$. 
Moreover, $(M,g_\infty(t))$ is an expanding
gradient Ricci 
soliton.
\end{rem}

\subsection{Long time behaviour of the Ricci flow.}
We will only consider solutions which satisfy the trace Harnack inequality. 
Notice that this is automatic if 
we consider a solution coming out of the proof of Theorem~\ref{mainT}. 

\begin{lem}\label{volume scale decay} Let $(M^n,g(t))$ be a non flat immortal solution 
of the Ricci flow with $K^\C\ge 0$ satisfying the trace Harnack 
inequality. If $(M,g(0))$ does not have Euclidean 
volume growth, then for $p_0\in M$ 
 \[
\limsup_{t\to \infty} \tfrac{\vol_{g(t)}(B_{g(t)}(p_0,\sqrt{t}))}{\sqrt{t}^n}=0.
 \]
\end{lem}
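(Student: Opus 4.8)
The plan is to argue by contradiction and extract an ancient solution as a blow-down limit, to which Lemma~\ref{AVR0} applies. So suppose there is a sequence $t_j\to\infty$ with $\vol_{g(t_j)}(B_{g(t_j)}(p_0,\sqrt{t_j}))\ge v_0 (\sqrt{t_j})^n$ for some $v_0>0$. Consider the parabolically rescaled flows $g_j(s):=\tfrac1{t_j}g(t_j s)$ for $s\in(0,1]$ (or on a slightly larger interval $s\in(0,2]$, using immortality), all based at $p_0$. By construction $\vol_{g_j(1)}(B_{g_j(1)}(p_0,1))\ge v_0\,\omega_n'$ for a fixed positive constant, so the rescaled balls are non-collapsed at scale $1$ at time $s=1$. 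Because $K^\C_{g(t)}\ge 0$ is scale-invariant, each $g_j(s)$ still has $K^\C\ge 0$, hence $K_{g_j(s)}\ge 0$; combined with Lemma~\ref{lem: complete2} (whose distance-distortion estimate rescales to give uniform control) the rescaled metrics stay complete and the base point does not run to infinity.

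The next step is to obtain uniform curvature bounds for the rescaled flows on a fixed ball around $p_0$ and a fixed time interval. Here I would run the point-picking argument exactly as in the proof of Proposition~\ref{Int_BC}: if $\scal_{g_j(1)}$ were not bounded on $B_{g_j(1)}(p_0,D)$ we would, after a further rescaling, produce a complete non-flat ancient solution with $K^\C\ge 0$, bounded curvature, and volume ratio bounded between two positive constants as $r\to\infty$ (the lower bound from the non-collapsing just established together with Bishop--Gromov, the upper bound $\omega_n$ from $K\ge 0$), contradicting Lemma~\ref{AVR0}. Thus $|\Rm_{g_j(s)}|\le C_D$ on $B_{g_j(s)}(p_0,D)$ for $s$ in a fixed subinterval of $(0,1]$; Shi's local estimates (Theorem~\ref{ModShi}) then give bounds on all derivatives, and the non-collapsing gives a lower injectivity radius bound via Theorem~\ref{ChGrT}. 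Hamilton's compactness (Theorem~\ref{locHCT}) yields a limit Ricci flow $(M_\infty,g_\infty(s),p_\infty)$, $s\in(0,1]$, complete, with $K^\C_{g_\infty(s)}\ge 0$ and bounded curvature on compact time subintervals.

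Now I use the trace Harnack inequality: since each $g(t)$ satisfies it and it is scale-invariant, the limit flow satisfies $\partial_s(s\,\scal_{g_\infty(s)})\ge 0$, so $\scal_{g_\infty(s)}$ is uniformly bounded backward in time and the flow extends to an ancient solution $s\in(-\infty,1]$ with bounded curvature on $(-\infty,1]$ (one reruns the compactness argument shifting base times backward, or invokes the standard fact that a Harnack-type bound upgrades the limit to ancient). If $g_\infty$ is non-flat, Lemma~\ref{AVR0} forces $\lim_{r\to\infty}\vol_{g_\infty(s)}(B(\cdot,r))/r^n=0$; but the volume ratios $\vol_{g_j(1)}(B_{g_j(1)}(p_0,r))/r^n$ for $r\le\sqrt{t_j}$ are, by Petrunin's estimate (Lemma~\ref{Pet_si}) applied to the original flow together with monotonicity-of-volume arguments as in Proposition~\ref{curv_est_NN}, uniformly bounded \emph{below} by a positive constant up to scale $\sqrt{t_j}\to\infty$; passing to the limit this contradicts the vanishing of the limit volume ratio. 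It remains to rule out the flat limit: if $g_\infty$ is flat, then $M_\infty=\R^n/\Gamma$, and the non-collapsing plus completeness force $M_\infty=\R^n$, i.e.\ the limit has Euclidean volume growth; but this would force $(M,g(0))$ itself to have Euclidean volume growth (the rescaled volume ratios at scale $\sqrt{t_j}$ converge to $\omega_n$, and by Bishop--Gromov monotonicity the volume ratio of $(M,g(0))$ is $\ge\omega_n$ at all scales, hence $=\omega_n$), contradicting our hypothesis.

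\textbf{Main obstacle.} The delicate point is the last one: getting a genuine contradiction with the ``no Euclidean volume growth'' hypothesis from a \emph{flat} blow-down limit, since a priori the rescalings are taken around the fixed point $p_0$ rather than around the points $x_j$ witnessing the volume lower bound, and one must be careful that non-collapsing at scale $\sqrt{t_j}$ at the \emph{shifted} time $t_j$ transfers to a volume lower bound at time $0$ — this uses that $K^\C\ge 0$ gives $\ric\ge 0$, so distances and hence ball inclusions behave monotonically under the flow, exactly as in Proposition~\ref{LBT} and Corollary~\ref{Cor_vol}. The other technical nuisance is justifying that the limit is ancient with globally bounded curvature rather than merely defined on $(0,1]$; this is where the trace Harnack inequality is essential and cannot be dropped.
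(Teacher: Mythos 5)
Your first half is the paper's first half: assume a sequence $t_k\to\infty$ with $\vol_{g(t_k)}(B_{g(t_k)}(p_0,\sqrt{t_k}))\ge \eps\,\sqrt{t_k}^{\,n}$, rescale parabolically by $1/t_k$, and run the point-picking of Proposition~\ref{Int_BC} (using the non-collapsing at scale $1$) to get $\scal_{\tg_k(t)}\le C/t$ near $p_0$; the trace Harnack inequality then gives a universal bound $\scal_{g(t_k)}\le C_2/t_k$ on $B_{g(t_k)}(p_0,\sqrt{t_k})$, and hence $\scal_{g(t)}\le C_2/t$ on all of $M$. Up to here you agree with the paper. The concluding step, however, has two genuine gaps.

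First, Lemma~\ref{AVR0} cannot be applied to your blow-down limit, because that limit is not ancient. The flows $g_j(s)=\tfrac1{t_j}g(t_js)$ exist only for $s\ge 0$ (the original flow starts at $t=0$), so any Cheeger--Gromov limit lives on $(0,\infty)$ at best; the trace Harnack inequality controls $s\,\scal$ backward in time but does not manufacture backward existence, so the claim that it ``upgrades the limit to ancient'' is false. Indeed the expected blow-down limit here is an expanding soliton (compare Remark~\ref{rem: euclidean}), which is immortal, non-flat, and has \emph{positive} asymptotic volume ratio --- so even morally there is no contradiction with Lemma~\ref{AVR0} to be had from the limit itself; the contradiction must come from transferring the volume bound back to $g(0)$.

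Second, that transfer is exactly where your argument points the wrong way. With $\ric\ge 0$ distances decrease, so $B_{g(0)}(p_0,r)\subset B_{g(t_k)}(p_0,r)$: the time-$t_k$ ball of radius $\sqrt{t_k}$ \emph{contains} the time-$0$ ball but may a priori reach points at enormous $g(0)$-distance, so its large volume says nothing about $g(0)$-balls of comparable radius. What is needed is the reverse inclusion $B_{g(t)}(p_0,\sqrt t)\subset B_{g(0)}(p_0,C_3\sqrt t)$, which follows from the global bound $\scal_{g(t)}\le C_2/t$ via Hamilton's distance-distortion estimate (\cite{Hamform}, Editor's note 24). With that inclusion, and using that volume elements are nonincreasing since $\scal\ge 0$, one gets
\[
\eps\,\sqrt{t_k}^{\,n}\le \vol_{g(t_k)}\bigl(B_{g(0)}(p_0,C_3\sqrt{t_k})\bigr)\le \vol_{g(0)}\bigl(B_{g(0)}(p_0,C_3\sqrt{t_k})\bigr),
\]
which (by Bishop--Gromov monotonicity of the ratio) contradicts the hypothesis that $(M,g(0))$ does not have Euclidean volume growth. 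This single computation replaces both your ``non-flat'' and ``flat'' cases; no compactness or case distinction on the limit is needed.
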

\begin{proof} Suppose on the contrary that we can find $t_k\to \infty$ 
and $\eps>0$ with $\vol_{g(t_k)}(B_{g(t_k)}(p_0,\sqrt{t_k}))\ge \eps \sqrt{t_k}^n$.
Analogous to Proposition~\ref{Int_BC}
one can show that there is some universal $\mc T > 0$ such that 
for the rescaled flow 
$\tg_k(t)= \tfrac{1}{t_k}g(t_k+ t \cdot t_k)_{t\in [-1,\infty)}$ 
we have that $\scal_{\tg_k(t)}\le \tfrac{C}{t}$ on 
$B_{\tg_k(t)}(p_0,1)$ for $t\in (0,\mc T]$ where $C$ is independent of $k$. 
Using this for $t=\mc T$ and combining with the Harnack inequality,
we find a universal constant $C_2$ 
with $\scal_{\tg_k(0)}\le C_2$ on $B_{\tg_k(0)}(p_0,1)$. 

Thus we obtain that $\scal_{g(t_k)}\le \tfrac{C_2}{t_k}$ 
on $B_{g(t_k)}(p_0,\sqrt{t_k})$. 
Combining with the Harnack inequality 
and using $B_{g(0)}\subset B_{g(t_k)}$ we deduce 
that $\scal_{g(t)}\le \tfrac{C_2}{t}$ on $M$. 
By Hamilton \cite[Editor's note 24]{Hamform} this implies that 
$B_{g(t)}(p_0,\sqrt{t})\subset B_{g(0)}(p_0, C_3\sqrt{t})$
for a constant $C_3=C_3(C_2,n)$. 
Hence
\[
\eps \sqrt{t_k}^n\le \vol_{g(t_k)}\Bigl( B_{g(0)}(p_0,C_3\sqrt{t_k})\Bigr)\le 
\vol_{g(0)}\Bigl( B_{g(0)}(p_0,C_3\sqrt{t_k})\Bigr),
\]
which means that $g(0)$ has Euclidean volume growth -- a contradiction.
\end{proof}

\begin{thm}\label{thm: steady} Let  $(M^n,g(t))$ be a non flat immortal Ricci flow with $K^\C\ge 0$ satisfying the trace Harnack 
inequality. If $(M,g(0))$  does not have Euclidean 
volume growth, then for $p_0\in M$ there is a sequence of times 
$t_k\to \infty$ and a rescaling sequence $Q_k$ 
such that for $\tg_k(t)= Q_kg(t_k+\tfrac{t}{Q_k})$ 
the following holds. The rescaled flow 
$(M,\tg_k(t),p_0)$ converges in the Cheeger-Gromov sense 
to a steady soliton
$(M_\infty,\tg_\infty(t))$ which is not isometric 
to $\R^n$.
\end{thm}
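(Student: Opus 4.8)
The plan is to run Hamilton's point-picking procedure for immortal solutions that are \emph{not of type~III}, extract an eternal limit Ricci flow, and then exploit the rigidity of eternal solutions with $K^\C\ge0$ that attain the maximum of their scalar curvature over space-time and satisfy the trace Harnack inequality. Throughout I will use that $K^\C\ge0$ is preserved (Remark~\ref{RemPCSC}(d)), that it implies $|\Rm|\le\scal$ and $\Ric\ge0$, and that the trace Harnack inequality of \cite{BreHar} passes to Cheeger--Gromov limits.

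\emph{Step 1: the flow is of type~IIb.} First I would prove that $\limsup_{t\to\infty}t\sup_M\scal_{g(t)}=\infty$ and, more precisely, $\lim_{t\to\infty}t\,\scal_{g(t)}(p_0)=\infty$. The trace Harnack inequality gives $\partial_t\bigl(t\,\scal_{g(t)}(p_0)\bigr)\ge0$, and its integrated form propagates any value $s\,\scal_{g(s)}(x)$ at a space-time point $(x,s)$ to the bound $\liminf_{t\to\infty}t\,\scal_{g(t)}(p_0)\ge s\,\scal_{g(s)}(x)$; hence it suffices to rule out the type~III alternative $\sup_M\scal_{g(t)}\le C/t$ for $t\ge t_0$. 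If that held, the blow-down $c_i\,g(t/c_i)$, $c_i\searrow0$, would have uniformly bounded curvature on compact time-intervals and, arguing as in Remark~\ref{rem: euclidean} with \cite{BreHar} in place of Hamilton's Harnack (following \cite{SS}), would subconverge to an expanding soliton whose Gromov--Hausdorff initial datum is the asymptotic cone of $(M,g)$; since a metric cone has Euclidean volume growth and volume ratios are scale invariant, $(M,g)$ would have Euclidean volume growth, contrary to hypothesis, while in the collapsed sub-case, where the blow-down degenerates, the same volume accounting should give the contradiction directly.

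\emph{Step 2: point-picking and rescaling.} Fix $T_k\to\infty$ and choose $(\bar p_k,t_k)\in M\times(0,T_k]$ with $t_k(T_k-t_k)\,\scal_{g(t_k)}(\bar p_k)\ge\bigl(1-\tfrac1k\bigr)\sup_{M\times(0,T_k]}t(T_k-t)\,\scal_{g(t)}$, and set $Q_k:=\scal_{g(t_k)}(\bar p_k)$. By Step~1 the supremum grows fast enough that $Q_kt_k\to\infty$ and $Q_k(T_k-t_k)\to\infty$, so $\tg_k(t):=Q_k\,g\bigl(t_k+t/Q_k\bigr)$ is defined on time-intervals exhausting $(-\infty,\infty)$, satisfies $\scal_{\tg_k(0)}(\bar p_k)=1$, and, because $(t_k+t/Q_k)(T_k-t_k-t/Q_k)/\bigl(t_k(T_k-t_k)\bigr)\to1$ uniformly on compact $t$-intervals, obeys $|\Rm|_{\tg_k(t)}\le\scal_{\tg_k(t)}\le1+o(1)$ on all of $M$ for $t$ in any fixed compact interval. (To keep the marked point at the prescribed $p_0$ one replaces $\bar p_k$ by $p_0$, which is legitimate once Step~1 is upgraded to a uniform-in-$p$ statement near $p_0$; the basepoint issue is not essential.)

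\emph{Step 3: compactness and rigidity.} The main obstacle is a uniform lower bound $\injrad_{\tg_k(0)}(\bar p_k)\ge c(n)>0$: by Lemma~\ref{volume scale decay} the original flow collapses at the parabolic scale $\sqrt{t}$, so non-collapsedness even at the finer curvature scale $Q_k^{-1/2}=o(\sqrt{t_k})$ is not automatic and must be established by a Perelman-type no-local-collapsing argument, bounding below the reduced volume of $(M,g(t))_{t\in(0,t_k]}$ based at $(\bar p_k,t_k)$ and using $K^\C\ge0$. Combined with the curvature bound of Step~2, this feeds into Theorem~\ref{ChGrT}, and then Hamilton's compactness theorem (Theorem~\ref{locHCT}) produces a smooth pointed limit $(M_\infty,\tg_\infty(t),p_\infty)$, $t\in\R$, which is a complete eternal Ricci flow with $K^\C_{\tg_\infty(t)}\ge0$, bounded curvature, $\scal_{\tg_\infty(0)}(p_\infty)=1$, and, by the near-maximality surviving in the limit, with $\scal_{\tg_\infty}$ attaining its space-time supremum at $(p_\infty,0)$. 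The limit satisfies the trace Harnack inequality of \cite{BreHar}, whose eternal form $\partial_t\scal+2\langle\nabla\scal,V\rangle+2\Ric(V,V)\ge0$ gives $\partial_t\scal\ge0$; the equality case of the Harnack estimate at the maximum point (Hamilton's rigidity argument, valid here since $\Ric\ge0$) then forces $\tg_\infty(t)$ to be a steady gradient Ricci soliton. Finally $\scal_{\tg_\infty(0)}(p_\infty)=1>0$ shows it is not flat, hence not isometric to $\R^n$. I expect the no-local-collapsing step to be the principal difficulty, and excluding type~III (its collapsed sub-case in Step~1) the secondary one.
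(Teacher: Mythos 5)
Your overall strategy (Hamilton's type-II point-picking at the curvature scale, an eternal limit, and the rigidity of eternal solutions attaining their space-time curvature maximum) is a genuinely different route from the paper's, but it contains a gap that I do not think the tool you propose can close. The crux is your Step 3: you need non-collapsing at the curvature scale $Q_k^{-1/2}$ of the picked points $(\bar p_k,t_k)$ with $t_k\to\infty$, and you propose to obtain it from a Perelman-type reduced-volume argument. But Perelman's no-local-collapsing needs a lower bound on the reduced volume at the largest scale $\tau\sim t_k$, which in the standard setting comes from the fixed initial metric of a closed finite-time flow; here $t_k\to\infty$, $M$ is non-compact, and the hypotheses explicitly allow $(M,g(0))$ (and, by Lemma~\ref{volume scale decay}, $(M,g(t))$ at the parabolic scale $\sqrt t$) to be volume-collapsed, so no uniform-in-$k$ lower bound is available and the argument does not run. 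The same difficulty infects your Step 1 (``the collapsed sub-case \ldots should give the contradiction directly'' is not an argument, and the Schulze--Simon blow-down you invoke itself requires non-collapsing and bounded curvature), and your parenthetical re-basing from $\bar p_k$ to $p_0$ is not innocent: after rescaling by the curvature at a possibly far-away picked point, $p_0$ may escape to infinity or lie in a region that becomes flat or degenerate in the limit.

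The paper circumvents all of this by choosing the rescaling factor differently: $Q_k=Q(t_k)$ is defined by the \emph{volume} normalization $\vol_{g(t_k)}\bigl(B_{g(t_k)}(p_0,Q_k^{-1/2})\bigr)=\tfrac12\omega_n Q_k^{-n/2}$ at the fixed basepoint $p_0$, so the rescaled unit ball about $p_0$ has volume exactly $\tfrac{\omega_n}{2}$; this yields the injectivity-radius bound for free and simultaneously shows the limit is not $\R^n$. The times $t_k$ are chosen so that $\partial_t\scal_{g(t)}(p_0)|_{t=t_k}\le\eps_k\scal_{g(t_k)}(p_0)^2$ with $\eps_k\to0$ (such times exist for any immortal solution by ODE comparison; no type-III versus type-IIb dichotomy is needed), Lemma~\ref{volume scale decay} guarantees $Q_kt_k\to\infty$ so the limit is ancient rather than eternal, and the soliton conclusion comes from Brendle's Harnack rigidity for ancient solutions with $K^\C\ge0$ applied at the single space-time point where $\partial_t\scal=0$, rather than from Hamilton's eternal-solution rigidity. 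To salvage your approach you would have to supply the non-collapsing estimate by some other means; as written, it is assumed rather than proved.
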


\begin{proof} For $t\in [0,\infty)$ we define $Q(t)>0$ 
as the minimal number for which
\[
\vol_{g(t)}\Bigl( B_{g(t)}\bigl(p_0,\tfrac{1}{\sqrt{Q(t)}}\bigr)\Bigr)=
\tfrac{1}{2}\omega_n\tfrac{1}{\sqrt{Q(t)}^n}
\]

We can choose $\eps_k\to 0$ and $t_k\to \infty$ 
with $\tfrac{\partial}{\partial t}_{|t=t_k} \scal_{g(t)}(p_0)\le \eps_k\scal_{g(t_k)}(p_0)^2$. 
In fact otherwise it would be easy to deduce that a finite 
time singularity occurs. 

By Lemma~\ref{volume scale decay}
the rescaled Ricci flow
$\tg_k(t)= Q_k\, g(t_k+\tfrac{t}{Q_k})$ with $Q_k=Q(t_k)$
is defined on an interval $[-T_k,\infty)$ with $T_k\to \infty$.
Moreover, $\vol_{\tg_k(0)}\(B_{\tg_k(0)}(p_0,1)\)=\tfrac{\omega_n}{2}$ 
and $\tfrac{\partial}{\partial t}_{|t=0} \scal_{\tg_k(t)}(p_0)\le
 \eps_k\scal_{\tg_k(0)}(p_0)^2$.

Arguing as in the proof of
 Lemma~\ref{volume scale decay} one can show 
that there is some $\mc T>0$ such that for each $r$ there is
 a constant $C_r$ for which
$\scal_{\tg_k(\mc T)}\le C_r$ on $B_{\tg_k(\mc T)}(p_0,r)$.
Using the Harnack inequality  
after possibly increasing $C_r$ we may assume 
that $\scal_{\tg_k(t)}\le C_r$ on $B_{\tg_k(\mc T)}(p_0,r)$ 
for all $t\in [-T_k/2,\mc T]$. 
Shi's estimate also give bounds on the derivative of the 
curvature tensor on $B_{\tg_k(\mc T)}(p_0,r)\times [-T_k/4,\mc T/2]$. 

After passing to a subsequence 
we may assume $(M,\tg_k(\mc T/2),p_0)$ 
converges in the Cheeger-Gromov sense 
to $(M_\infty,\tg_\infty(\mc T/2),p_\infty)$. 
By the Arcel$\grave{a}$-Ascoli theorem we also 
may assume that under the same set of local diffeomorphisms 
the pull backs of
$\tg_k(t)$ converge to 
$\tg_\infty(t)$, ${t\in (-\infty,\mc T/2]}$. 
Clearly $\tg_\infty(t)$ is a solution of the Ricci 
flow with $K^\C_{\tg_\infty(t)}\ge 0$. The completeness 
of $\tg_\infty(t)$ follows from the completeness of $ \tg_\infty(\mc T/2)$, 
for $t<\mc T/2$.
Moreover, we have that $\tfrac{\partial\scal_{\tg_\infty(0)}(p_0)}{\partial t}=0$. 
If $\tg_\infty(0)$ is not flat, we can pass to the universal cover of $M_\infty$ 
and after spitting off an Euclidean factor we may assume that the Ricci 
curvature is positive.
Recall that for ancient solutions with $K^\C_{\tg_\infty(t)}\ge 0$
 and positive Ricci curvature the Harnack inequality implies that 
\[
0\le \tfrac{\partial\scal_{\tg_\infty(t)}}{\partial t}-\tfrac{1}{2}\Ric^{-1}_{\tg_\infty(t)}(\nabla \scal_{\tg_\infty(t)},\nabla \scal_{\tg_\infty(t)}),
\]
where $\Ric^{-1}_{\tg_\infty(t)}$ is the (positive definite) symmetric
 $(2,0)$-tensor defined by the equation
$\Ric^{-1}_{\tg_\infty(t)}\bigl(v,\Ric^{\tg_\infty(t)}w\bigr)=\tg_\infty(t)(v,w)$.
Since equality holds for one point in space-time one can deduce 
from a strong maximum principle, 
that $\tilde g_\infty(t)$ is a steady Ricci soliton.
In fact, this only requires minor modification in the proof 
of a result by Brendle \cite[Proposition 14]{BreHar}. We 
leave the details to the reader.
Finally $\vol_{\tg_\infty(0)}\bigl(B_{\tg_\infty(0)}(p_0,1)\bigr)=\tfrac{1}{2}\omega_n$ and thus
the limit is not the Euclidean space.
\end{proof}

\subsection{Further Consequences} 
\begin{cor}\label{cor: deform} Let $(M,g)$ be an open manifold
with $K^{\C}_g\ge 0$ and $M\cong \R^n$, then 
there is a sequence $g^i$ of complete metrics on $M$
with $K^{\C}_{g^i}> 0$ converging to $g$ in the 
$C^\infty$ topology.
\end{cor}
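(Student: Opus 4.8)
The plan is to run the Ricci flow of Theorem~\ref{mainT} starting at $g$ and then, where necessary, correct it to strict positivity. Since $M\cong\R^n$ the soul of $M$ is a point, so Theorem~\ref{mainT} (via Proposition~\ref{reduction} and the compactness argument of Section~\ref{sec:pos-case}) yields a solution $(M,g(t))_{t\in[0,\mc T]}$ with $g(0)=g$, $K^\ce_{g(t)}\ge 0$, and $g(t)\to g$ in $C^\infty_{\mathrm{loc}}$ as $t\to 0$ (the limit metric is smooth on $M\times[0,\mc T]$ by construction). Hence it suffices to produce, for each small $t>0$, a complete metric with $K^\ce>0$ that is $C^\infty_{\mathrm{loc}}$-close to $g(t)$; in the generic case $g(t)$ itself already works and then $g^i:=g(1/i)$, for $i$ large, is the desired sequence.

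I would first dispose of the case in which $(M,g)$ has no local de Rham splitting. Then $g(t)$ likewise has none — the flow preserves isometric products and $g(t)\to g$ — and, $M$ being simply connected, it is irreducible. Arguing exactly as in the proof of Proposition~\ref{hol}: $M$ is then neither quaternion-K\"ahler nor a non-flat locally symmetric space (a complete non-flat quaternion-K\"ahler or symmetric manifold with $\scal\ge 0$ cannot be diffeomorphic to $\R^n$), and the K\"ahler subcase is ruled out by rigidity for $K^\ce\ge0$; Berger's holonomy classification therefore forces $\SO(n)$-holonomy for $g(t)$, and the Brendle--Schoen strong maximum principle (already invoked for Lemma~\ref{AVR0}) upgrades $K^\ce_{g(t)}\ge 0$ to $K^\ce_{g(t)}>0$ for every $t>0$. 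To apply the strong maximum principle on the — possibly unbounded-curvature — limit flow one passes to the closed approximations $(M_i,g_i(t))$ of Proposition~\ref{reduction}, on which it is classical.

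It remains to treat the reducible case. By Theorem~\ref{splitting} and \cite[Corollary~6.2]{ChGr}, $M=\R^q\times M_1\times\dots\times M_k$ isometrically, with $\R^q$ flat and each $M_j\cong\R^{n_j}$, $n_j\ge 2$, irreducible, non-flat, of $K^\ce\ge0$. By the previous paragraph each $M_j$ carries complete metrics $g_j^{\,i}\to g_j$ in $C^\infty_{\mathrm{loc}}$ with $K^\ce_{g_j^{\,i}}>0$, and the flat $\R^q$ (for $q\ge2$) is deformed to rotationally symmetric paraboloid metrics $h^i\to g_{\mathrm{flat}}$ with positive curvature operator. One cannot simply take the product $h^i\times g_1^{\,i}\times\dots\times g_k^{\,i}$: a vector tangent to one factor and a vector tangent to another span a complex plane of vanishing $K^\ce$. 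Instead one recombines by twisting — realizing $M$ as a trivial fibre bundle whose fibres carry these deformed metrics and whose Ehresmann connection is chosen generically. By O'Neill's formulas the mixed complex sectional curvatures become strictly positive wherever the O'Neill $A$-tensor is nonzero on a pair of factors, while the intrinsic curvatures of the factors drop only by an amount quadratic in $A$ and so remain positive once the connection is small enough; letting the connection, the paraboloid deformation and the flow parameter all tend to their trivial values produces complete metrics $g^i$ with $K^\ce_{g^i}>0$ converging to $g$ in $C^\infty_{\mathrm{loc}}$.

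The main obstacle is exactly this recombination in the reducible case: the product metric always has null complex planes between factors, so genuinely non-product deformations are needed, and making the twisting argument precise — controlling it simultaneously for all pairs of factors (the flat one included), iterating it when an individual $M_j$ still splits further, and keeping it compatible with completeness and with the Cheeger--Gromoll convex structure — is where the real work lies. An equivalent route is to set it up as a gluing: cut off a large convex domain of the product metric and cap it with a rotationally symmetric positively curved end diffeomorphic to $\R^n\setminus B^n$; then the difficulty becomes arranging the seam so that the smoothing (\`a la Lemma~\ref{lem: smoothing}, now quantitatively, exploiting the strictly convex corner) delivers \emph{strictly} positive, not merely nonnegative, $K^\ce$. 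By contrast, the irreducible case and the convergence $g(t)\to g$ are immediate from what is already proved.
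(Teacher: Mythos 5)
Your reduction to the de Rham decomposition and your treatment of the irreducible case (Berger holonomy plus the Brendle--Schoen strong maximum principle applied along the flow) match the paper. But the reducible case is where the statement actually has content, and there your proposal has a genuine gap: you correctly observe that the product of positively curved deformations still has null complex planes spanned by vectors tangent to different factors, but the two remedies you sketch (a generically twisted Ehresmann connection, or a rotationally symmetric cap glued along a convex seam) are left unexecuted, and you say yourself that making them precise ``is where the real work lies.'' The O'Neill route in particular is doubtful as stated: for a submersion the horizontal sectional curvature of the total space is $K_{\mathrm{base}}-3|A_XY|^2$, so twisting \emph{lowers} some curvatures, and there is no reason a ``generic'' small connection makes all mixed \emph{complex} sectional curvatures strictly positive while keeping the fibre curvatures positive. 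A smaller but real issue is your claim that the K\"ahler subcase is ``ruled out by rigidity'': it is not, and the paper does not rule it out.

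The device you are missing is elementary once seen. After flowing, each non-flat factor $(M_j,g_j(t))$ has holonomy $\SO(n_j)$ or $\U(n_j/2)$, and in either case the strong maximum principle gives positive \emph{real} sectional curvature (in the K\"ahler case a zero real plane would force, via the maximum principle, one of two degeneracies each implying flatness). Positivity of $K$ is exactly what Theorem~\ref{GW1t} needs: each factor, and also $\R^q$, carries a smooth proper \emph{strictly} convex function, and the sum $b(t)$ of these is proper and strictly convex on the product $(M,g(t))$. Now define $g^i$ as the induced metric on the graph of $\eps_i\, b(t_i)$ inside $(M,g(t_i))\times\R$. The ambient product has $K^\C\ge 0$, the second fundamental form of the graph is positive definite (proportional to $\nabla^2(\eps_i b)$ up to the usual graph factors), and the Gauss equation then makes \emph{every} complex sectional curvature of $g^i$ strictly positive --- including on the mixed planes that killed the naive product. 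Letting $t_i\to0$ and $\eps_i\to0$ sufficiently fast gives $g^i\to g$ in $C^\infty$. This replaces both of your proposed recombination schemes and requires no twisting, gluing, or quantitative smoothing.
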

\begin{proof} Consider the de Rham decomposition
$M=\R^k\times (M_1,g_1)\times \ldots\times (M_l,g_l)$ of $M$. 
Let $g_j(t)$ be a Ricci flow from Theorem \ref{mainT}
 on $M_j$ with $g_j(0) = g_j$, and let $g(t)$ be the corresponding product metric on 
$M$.
 We  know that $K^{\C}_{g_j(t)}\ge 0$
and clearly $(M_j,g_j(t))$ is irreducible for 
small $t\ge 0$. 
Since $M_j$ is diffeomorphic to a Euclidean space, 
$(M_j,g_j(t))$ cannot be Einstein and we can deduce 
from Berger's holonomy classification theorem \cite{Berh}
that the holonomy group is either $\SO(n_j)$ 
or $\gU(n_j/2)$, where $n_j = \dim M_j$.

The strong maximum principle implies 
that either $ K^{\C}_{g_j(t)}> 0$ or $(M_j,g_j(t))$ 
is K\"ahler. 
Even in the K\"ahler case it follows
 that the (real) sectional curvature is positive,
$K_{g_j(t)}>0$: If there were a real plane 
with $K_{g_j(t)}(\sigma)=0$, then by the strong maximum
principle we could deduce that either $K_{g_j(t)}(v,Jv)=0$ 
for all $v\in TM$ or $ K_{g_j(t)}(v,w)=0$ for all 
$v, w\in T_pM$ with ${\rm span}_{\R} \{v, J v\} \perp {\rm span}_{\R} \{w, J w\}$. But both conditions 
imply in $K \geq 0$ that the manifold is flat. 

Since $K_{g_j(t)}>0$, Theorem~\ref{GW1t} of Greene and Wu gives a strictly convex smooth 
proper nonnegative function $b_j(t)$ on $(M_j,g_j(t))$. 
Clearly we can also find such a function on $\R^k$. 
By just adding these functions we deduce 
that there is a proper function $b(t)\colon M\rightarrow [0,\infty)$ 
which is strictly convex with respect to the product 
metric $g(t)$. 
We now choose a sequence $t_i\to 0$ and $\eps_i\to 0$ 
and define $g^i$  as the metric on $M$ which is obtained by pulling back the 
metric on the graph of 
$\eps_i b(t_i)$  viewed as hypersurface in $(M,g(t_i))\times \R$. 
Clearly $K^\C_{g^i}>0$ and if $\eps_i$ tends to 
zero sufficiently fast, then $g^i$ converges to $g$ 
in the $C^\infty$ topology.
\end{proof}

\begin{rem} Although 
a priori we could prove this only for large $\ell$, 
it is true that for each convex set $C_{\ell}=b^{-1}((-\infty,\ell])$ 
one can find a Ricci flow on a compact manifold 
with $K^\C\ge0$ such that 
$(M,g(t))$ converges to the
 double $D(C_\ell)$ of $C_{\ell}$ for $t\to 0$. 
In fact, by using Corollary~\ref{cor: deform} one 
can find a sequence of strictly convex sets $C_{\ell,k}$
 in manifolds with $K^\C>0$ 
which converge in the Gromov-Hausdorff topology to $C_{\ell}$. 
For strictly convex sets it is not hard to see 
that one can smooth the double $D(C_{\ell,k})$ without losing $K^\ce \geq 0$ and thus the result follows. 
\end{rem}

\section{ An immortal nonnegatively curved solution of the Ricci flow with 
unbounded curvature}\label{sec: immortal}

\subsection{Double cigars}

Recall that Hamilton's cigar is the complete Riemannian surface $(C, g_0) :=\(\R^2, \frac{dx^2 + dy^2}{1 + x^2 + y^2}\)$, which is rotationally symmetric, positively curved and asymptotic at infinity to a cylinder of radius $1$. The Ricci flow starting at $(C, g_0)$ is a gradient steady Ricci soliton (i.e.~an eternal self-similar solution). 
\begin{defin} Let $(M,\bar g)$ and $(N,g)$ be two complete 
$n$-dimensional Riemannian manifolds, $p\in M$ and $q\in N$. 
We say $(M,\bar g,p)$ is $\eps$-close to $(N,g,q)$ if 
\begin{enumerate}
\item[$\circ$] there is a subset $U\subset N$ with 
$ B_{\frac{1}{\eps}-\eps}(q)\subset U\subset B_{\frac{1}{\eps}+\eps}(q)$ and
\item[$\circ$]  a diffeomorphism $f\colon B_{\frac1{\eps}}(p)\rightarrow U$ such that $ \|\bar g-f^{*}g\|_{C^k}\le \eps$ for all $k\le 1/\eps$.
\end{enumerate}
\end{defin}

We denote $x_0\in C$ 
the tip of Hamilton's cigar, i.e.~the unique fixed point of the isometry group $\mathrm{Iso}(C)$, where the maximal 
curvature of $C$ is attained.
We will also consider the rescaled manifolds $(C,\lambda^2 g_0)$.
\begin{defin} A nonnegatively curved metric $g$ on $\Sph^2$ 
is called an $(\eps,\lambda)$-double cigar if the following holds
\begin{enumerate}
 \item[$\circ$] $g$ is invariant under  $O(2)\times \Z_2\subset O(3)$, and
\item[$\circ$] if $\bar p$ is one of the two fixed points of the identity component  
of  $O(2)\times \Z_2$, then $(\Sph^2,g, \bar p)$ is $\eps$-close 
to $(C,\lambda^2g_0,x_0)$.
\end{enumerate}
\end{defin}

An important feature of the definition is that except for nonnegative curvature, 
 we do not make  any assumptions on the middle region of the double cigar. 
In the applications we will have $\diam(\mathbb S^2,g)\gg\tfrac{1}{\eps}$. 

We have two easy consequences of compactness results.

\begin{lem}\label{lem: double cigar} For any $\lambda$ and $\eps>0$ there exists some $\delta>0$ 
such that: If $(\Sph^2,g)$ is any 
$(\delta,\lambda)$-double cigar and $(\Sph^2,g(t))$ is a Ricci flow with $g(0)=g$, then $ (\Sph^2,g(t))$
 is an $(\eps,\lambda)$-double cigar
for all $t\in [0,1/\eps]$.
 
\end{lem}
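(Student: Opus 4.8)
The plan is to argue by contradiction using Hamilton's compactness theorem for Ricci flows together with the uniqueness of the Ricci flow on closed manifolds. Suppose the statement fails: then there is some $\lambda$ and $\eps_0>0$, and a sequence of $(\tfrac1j,\lambda)$-double cigars $(\Sph^2,g_j)$ together with times $s_j\in[0,1/\eps_0]$ such that the Ricci flow $(\Sph^2,g_j(t))$ with $g_j(0)=g_j$ fails to be an $(\eps_0,\lambda)$-double cigar at time $s_j$. By definition, each $g_j$ is $O(2)\times\Z_2$-invariant and, around one of the two fixed points $\bar p_j$ of the identity component, $(\Sph^2,g_j,\bar p_j)$ is $\tfrac1j$-close to $(C,\lambda^2 g_0,x_0)$.

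The key step is the convergence of the initial data. Since $g_j$ is $\tfrac1j$-close to $(C,\lambda^2g_0,x_0)$ on larger and larger balls with better and better $C^k$ bounds, the pointed manifolds $(\Sph^2,g_j,\bar p_j)$ converge in the pointed $C^\infty$ (Cheeger-Gromov) topology to $(C,\lambda^2 g_0,x_0)$. Likewise, by the $\Z_2$-symmetry exchanging the two fixed points, the same holds based at the other fixed point, so the only possible ``escape of curvature'' happens in the middle region which drifts off to infinity in the pointed limit. Running the Ricci flow, we use Hamilton's compactness theorem (Theorem~\ref{locHCT}): for that we need uniform curvature and injectivity radius bounds on a fixed time interval near $\bar p_j$, which follow because the initial metrics converge smoothly to the fixed complete metric $\lambda^2g_0$ (whose Ricci flow exists for all time, being the cigar soliton), so by continuous dependence / the local stability of the Ricci flow the flows $g_j(t)$ stay uniformly controlled on $B_{g_j(0)}(\bar p_j, R)\times[0,1/\eps_0]$ for every fixed $R$. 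Hence $(\Sph^2,g_j(t),\bar p_j)$ subconverges, uniformly in $t\in[0,1/\eps_0]$, to the Ricci flow $(C,\lambda^2 g_0(t),x_0)$ issuing from the cigar; and since the cigar is a steady soliton, $\lambda^2g_0(t)$ is isometric to $\lambda^2g_0$ for all $t$ (the flow only moves by diffeomorphisms, fixing the tip $x_0$).

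From this convergence we extract the contradiction: for $j$ large, $(\Sph^2,g_j(s_j),\bar p_j)$ is $\eps_0$-close to $(C,\lambda^2g_0(s_j),x_0)$, and the latter is isometric to $(C,\lambda^2g_0,x_0)$; composing the isometry with the diffeomorphism from the definition of $\eps_0$-closeness gives that $(\Sph^2,g_j(s_j),\bar p_j)$ is $\eps_0$-close to $(C,\lambda^2g_0,x_0)$. Moreover the $O(2)\times\Z_2$-symmetry of $g_j$ is preserved by the Ricci flow, by uniqueness of the flow on the closed manifold $\Sph^2$. Hence $g_j(s_j)$ is an $(\eps_0,\lambda)$-double cigar, contradicting the choice of $s_j$. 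One must be slightly careful that in the definition of $(\eps,\lambda)$-double cigar the relevant fixed point $\bar p_j$ is fixed by the flow (true, since it is the fixed point of the isometry group, which is flow-invariant), and that ``$\eps_0$-close'' is an open-type condition stable under small $C^k$ perturbations so that the $\eps_0$ in the limit can absorb the small discrepancies; these are routine.

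The main obstacle is establishing the uniform-in-$j$ curvature and injectivity-radius control needed to invoke Hamilton's compactness on a definite time interval $[0,1/\eps_0]$ for the flows based at $\bar p_j$. This is not entirely automatic because the manifolds $\Sph^2$ are compact with shrinking necks, so one cannot quote a global bounded-curvature short-time existence estimate uniformly; instead one argues locally, using that the cigar $\lambda^2g_0$ has bounded curvature and its Ricci flow is the eternal soliton, together with a local stability / pseudolocality-type argument (or directly, since we are in dimension two where the flow is especially well understood), to propagate the initial $C^\infty$-closeness forward in time on balls of any fixed radius around $\bar p_j$. Once that local control is in place, the rest is a standard Cheeger-Gromov compactness and diagonal-sequence argument.
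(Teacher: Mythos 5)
Your overall strategy coincides with the paper's: argue by contradiction, extract a pointed limit of the flows based at the tip via Hamilton's compactness, identify the limit with the (static, up to diffeomorphism) cigar soliton flow by uniqueness, and contradict the failure of $\eps$-closeness at a limiting time. However, the step you yourself flag as the ``main obstacle'' --- uniform-in-$j$ curvature and injectivity radius bounds on $B_{g_j(0)}(\bar p_j,R)\times[0,1/\eps_0]$ --- is a genuine gap as written. Appealing to ``continuous dependence / local stability of the Ricci flow'' near the noncompact limit $(C,\lambda^2g_0)$ is circular: to know that the flows $g_j(t)$ stay close to the soliton flow for a definite time one already needs uniform a priori curvature control, and Perelman-type pseudolocality does not apply uniformly near the tip at a fixed scale. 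The paper closes this gap with a specific, non-perturbative input: an $(\eps,\lambda)$-double cigar is by definition a \emph{nonnegatively curved} metric on $\Sph^2$, so by Croke--Karcher \cite{CK} the volume of every unit ball is bounded below by a universal constant independent of $j$; Corollary~\ref{cor: noncollapsed} (equivalently, Theorem~\ref{Pet} combined with Proposition~\ref{curv_est_NN}) then yields curvature bounds of the form $C/t$ and injectivity radius bounds valid for all positive times and all $j$ simultaneously. This is what makes Hamilton's compactness applicable on $(0,1/\eps_0]$ with convergence up to $t=0$ on larger and larger balls.

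Two smaller omissions: (i) you never verify that the flows $g_j(t)$ exist up to time $1/\eps_0$; the paper gets $T_j\to\infty$ from Gau{\ss}--Bonnet, since the area of the double cigars tends to infinity while $\tfrac{d}{dt}\mathrm{Area}=-\int\scal=-8\pi$. (ii) Identifying the limit flow with the static cigar requires the uniqueness theorem of Chen--Zhu \cite{ChenZhu} for complete bounded-curvature flows on the noncompact surface $C$; your phrase ``the Ricci flow issuing from the cigar'' presupposes this uniqueness rather than invoking it. Both points are easy to repair, but the non-collapsing input in the first paragraph is the idea your argument is actually missing.
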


\begin{lem}\label{lem: stable double cigar} Let $\bar g$ be a nonnegatively curved metric on $\Sph^2$, $(\Sph^2, \bar g(t))_{t\in [0,T]}$ the Ricci flow with $\bar g(0)= \bar g$, and $\bar p\in \Sph^2$.
For a given $\eps > 0$ there exists a positive integer $\delta=\delta(\eps, \bar g)$ 
such that the following holds. 

Let $(M^3,g)$ be any open nonnegatively curved 3-manifold 
and $p\in M$ so that $(M,g,p)$ is $\delta$-close to $\bigl((\Sph^2,\bar g)\times \R,(\bar p,0)\bigr)$. 
If $(M,g(t))$ is an immortal nonnegatively curved  Ricci 
flow with $g(0)=g$, then 
$(M,g(t),p)$ is $\eps$-close to $ \bigr((\Sph^2,\bar g(t))\times \R,(\bar p,0)\bigr)$ 
for all $t\in [0,1/\eps]\cap [0,T/2]$.
\end{lem}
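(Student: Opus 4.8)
The plan is to argue by contradiction using Hamilton's compactness theorem for Ricci flows, exactly in the spirit of the two preceding ``double cigar'' stability lemmas. Suppose the conclusion fails for some $\eps>0$: then there is a sequence $\delta_i\to 0$ and open nonnegatively curved $3$-manifolds $(M_i,g_i,p_i)$ which are $\delta_i$-close to $\bigl((\Sph^2,\bar g)\times\R,(\bar p,0)\bigr)$, carrying immortal nonnegatively curved Ricci flows $(M_i,g_i(t))$ with $g_i(0)=g_i$, such that for each $i$ there is a time $t_i\in[0,1/\eps]\cap[0,T/2]$ at which $(M_i,g_i(t_i),p_i)$ fails to be $\eps$-close to $\bigl((\Sph^2,\bar g(t_i))\times\R,(\bar p,0)\bigr)$. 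Passing to a subsequence we may assume $t_i\to t_\infty\in[0,1/\eps]\cap[0,T/2]$.

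First I would establish uniform curvature bounds near $p_i$ on a fixed time interval, so that Hamilton's compactness (Theorem~\ref{locHCT}) applies. Since $\delta_i\to 0$, the balls $B_{g_i}(p_i,1/\delta_i)$ converge in $C^\infty$ to $B_{(\Sph^2,\bar g)\times\R}\bigl((\bar p,0),1/\delta_i\bigr)$; in particular the initial curvature $|\Rm_{g_i(0)}|$ is bounded on, say, $B_{g_i(0)}(p_i,R)$ by a constant depending only on $R$ and $\bar g$, uniformly in $i$ (for $i$ large). Combining this initial bound with the interior estimate of Theorem~\ref{SURF} — available here because of Lemma~\ref{lem: complete2}, which guarantees $g_i(t)$ stays complete and controls how fast metric balls can grow for solutions with $K^\C\ge 0$ — I obtain, for each $R>0$, a bound $|\Rm_{g_i(t)}|\le C(R)$ on $B_{g_i(t)}(p_i,R)$ for all $t\in[0,T/2]$ and all large $i$. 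Shi's derivative estimates (Theorem~\ref{ModShi}) then bound all covariant derivatives of curvature on slightly smaller balls. Non-collapsing at $p_i$ is automatic from the $\delta_i$-closeness to the product (the unit ball has volume close to that of the unit ball in $(\Sph^2,\bar g)\times\R$), so Theorem~\ref{ChGrT} gives a uniform injectivity radius lower bound at $p_i$.

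With these bounds in hand, Hamilton's compactness yields a subsequence for which $(M_i,g_i(t),p_i)$ converges in the pointed smooth Cheeger--Gromov sense, on the time interval $[0,T/2]$, to a complete pointed Ricci flow $(M_\infty,g_\infty(t),p_\infty)$ with $K^\C_{g_\infty(t)}\ge 0$. On the one hand, the $\delta_i$-closeness at $t=0$ forces $(M_\infty,g_\infty(0),p_\infty)$ to be isometric to $\bigl((\Sph^2,\bar g)\times\R,(\bar p,0)\bigr)$. By de Rham's theorem and the uniqueness of the Ricci flow on the closed factor, the flow $g_\infty(t)$ must then split isometrically as $(\Sph^2,\bar g(t))\times\R$ for all $t\in[0,T/2]$ — here I use that $\bar g(t)$ is defined on $[0,T]$, hence in particular on $[0,T/2]$. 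On the other hand, smooth convergence means that for $i$ large, $(M_i,g_i(t_i),p_i)$ is arbitrarily close to $(M_\infty,g_\infty(t_\infty),p_\infty)=\bigl((\Sph^2,\bar g(t_\infty))\times\R,(\bar p,0)\bigr)$, and continuity of $t\mapsto\bar g(t)$ makes $(M_i,g_i(t_i),p_i)$ in fact $\eps$-close to $\bigl((\Sph^2,\bar g(t_i))\times\R,(\bar p,0)\bigr)$ — contradicting the choice of $t_i$.

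The main obstacle is the very first step: extracting \emph{uniform} (in $i$) curvature bounds on a time interval whose length does not shrink, despite having no a priori global curvature bound on $M_i$. This is precisely where the completeness-preservation Lemma~\ref{lem: complete2} and the localized interior estimate Theorem~\ref{SURF} do the real work, controlling curvature on metric balls $B_{g_i(t)}(p_i,R)$ in terms of an initial bound on a slightly larger ball together with the nonnegativity of $K^\C$; everything afterwards is the standard compactness-and-rigidity packaging.
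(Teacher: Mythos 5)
Your overall strategy --- contradiction, uniform local geometry bounds near $p_i$, Hamilton compactness, identification of the limit with the product solution via uniqueness of the Ricci flow on $\Sph^2$, and then a contradiction at the times $t_i$ --- is exactly the route the paper takes (its own proof is a two-sentence appeal to ``arguing similar to the proof of the previous lemma'').

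There is, however, one concrete hole in your chain of estimates: you invoke Theorem~\ref{SURF} as if an initial curvature bound on $B_{g_i(0)}(p_i,R)$ alone yielded $|\Rm_{g_i(t)}|\le C(R)$ for $t\in[0,T/2]$. Theorem~\ref{SURF} has \emph{two} hypotheses: the initial bound \eqref{R_bdd_0} \emph{and} the a priori decay \eqref{R_bdd_t}, $|\Rm|_{g_i(t)}\le K/t$ on the ball for positive times; its conclusion is precisely the upgrade of a $K/t$ bound to a uniform bound. Lemma~\ref{lem: complete2} does not supply \eqref{R_bdd_t} --- it only controls distance distortion and completeness. You cannot get \eqref{R_bdd_t} from Corollary~\ref{cor: noncollapsed} either, since the manifolds $M_i$ are globally collapsed (they contain thinner and thinner cigar necks far from $p_i$). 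The missing ingredient is the \emph{localized} version of the Perelman point-picking argument of Proposition~\ref{Int_BC}: the $\delta_i$-closeness to $(\Sph^2,\bar g)\times\R$ near $p_i$ gives a uniform lower volume bound for unit balls in a fixed neighbourhood of $p_i$, Petrunin's estimate (Theorem~\ref{Pet}) propagates this for a definite time, and the blow-up/point-picking argument combined with Lemma~\ref{AVR0} then yields $\scal_{g_i(t)}\le C_D/t$ on $B_{g_i(t)}(p_i,D)$ for $t$ in a uniform interval. Only after this step does Theorem~\ref{SURF} apply and deliver the time-uniform curvature bounds you need for Hamilton's compactness. With that insertion the rest of your argument (Shi derivative bounds, injectivity radius via Theorem~\ref{ChGrT}, splitting of the limit and uniqueness on the $\Sph^2$ factor, continuity in $t$ to pass from $t_\infty$ back to $t_i$) goes through and matches the paper's intent.
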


\begin{proof}[Proof of Lemma~\ref{lem: double cigar}]
Suppose on the contrary that for some positive $\eps$ and $\lambda$
we can find a sequence of $(\tfrac{1}{i},\lambda)$-double 
cigars $(\Sph^{2},g_i)$ and times $t_i\in [0,\tfrac{1}{\eps}]$ such that 
$(\Sph^2,g_i(t_i))$ is not an $(\eps,\lambda)$-double cigar. 
Here $(\Sph^2, g_i(t))_{t\in [0, T_i)}$ is the maximal solution of the Ricci flow with $g_i(0)= g_i$ .  
Let $\bar p$ denote a fixed point of the identity component
of the $O(2)\times \Z_2$-action.
By assumption we know that $(\Sph^2,g_i(t_i),\bar p)$ 
is not $\eps$-close to $(C,\lambda^2g_0,x_0)$.

It is easy to see that the volume of 
 any unit ball in $(\Sph^{2},g_i)$ is bounded below by a universal constant
independent of $i$. Thus we have universal curvature 
and injectivity radius bounds for all positive times. 
Moreover, by Gau{\ss} Bonnet $T_i\to \infty$. 
Using furthermore that we have control of 
the curvature and its derivatives on larger and larger balls 
around $\bar p$, one can deduce that 
the Ricci flow subconverges to a (rotationally symmetric) limit immortal
solution on the cigar  $(C,g_\infty(t),x_0)$ with 
bounded curvature and whose initial metric is $\lambda^2 g_0$. 
Because of the uniqueness of the Ricci flow (see \cite{ChenZhu}) it follows that 
$(C,g_\infty(t),x_0)$ is isometric to $(C,\lambda^2g_0,x_0)$ for all 
$t$. On the other hand, if $t_\infty\in [0,1/\eps]$ 
is a limit of a convergent subsequence of $t_i$, then 
$(C,g_\infty(t_\infty),x_0)$ is not $\eps/2$-close to $(C,\lambda^2g_0,x_0)$
-- a contradiction.
\end{proof}

\begin{proof}[Proof of Lemma~\ref{lem: stable double cigar}]

Suppose on the contrary that we can find a sequence 
 $(M_i,g_i)$ of open $3$-manifolds with $K_{g_i} \geq 0$ 
and $p_i\in M_i$ such that $(M_i,g_i,p_i)$ is $\tfrac{1}{i}$-close to $\bigl((\Sph^2,\bar g)\times \R,(\bar p,0)\bigr)$
and a complete immortal Ricci flow  $g_i(t)$ 
 with $g_i(0)=g_i$ and $K_{g_i(t)} \geq 0$, such that 
$(M_i,g_i(t_i),p_i)$ is not 
$\eps$-close to $ \bigr((\Sph^2,\bar g(t))\times \R,(\bar p,0)\bigr)$
for some $t_i \in [0, T/2]$.

Arguing similar to the proof of the previous lemma, we can use Hamilton's compactness theorem  to deduce 
that $(M_i,g_i(t),p_i)$ converges to a limit 
nonnegatively curved solution on the manifold 
$\Sph^2\times \R$.   Clearly the solution 
is just given by the product solution 
on $\Sph^2\times \R$ and because of uniqueness of the 
Ricci flow on $\Sph^2$ we deduce that it is exactly given by 
$\bigr((\Sph^2,\bar g(t))\times \R,(\bar p,0)\bigr)$ -- 
again this yields a contradiction.
\end{proof}

\subsection{Convex hulls of convex sets}\label{subsec: convex hulls}
Let $C_0$ and $C_1$ be two closed convex sets of $\R^n$. 
Then 
\[C_\lambda=\{ (1-\lambda) x+ \lambda y\mid x\in C_0,y\in C_1\}\] 
is convex as well. 
If $\partial C_0$ and $\partial C_1$ are smooth compact
hypersurfaces of positive sectional curvature, 
then $\partial C_\lambda$ is smooth as well:
In fact, let $N_0$ and $N_1$ denote the unit outer normal fields
of $C_0$ and $C_1$. 
By assumption
 $N_i\colon \partial C_i\rightarrow \Sph^{n-1}$
is a diffeomorphism, $i=0,1$. 
For $z=(1-\lambda) x+ \lambda y\in C_\lambda$ and $\lambda\in (0,1)$ 
the tangent cone $T_zC_\lambda$ contains $T_xC_0$
as well as $ T_yC_1$. This in turn implies
\[
 \partial C_\lambda=\bigl\{ (1-\lambda) x+ \lambda N_1^{-1}(N_0(x))\mid x\in \partial C_0\bigr\}
\]
and thus $\partial C_\lambda$ is smooth.
Furthermore, it is easy to see that $\partial C_\lambda$ is positively curved as
well. 

Consider now the convex sets $C_0\times \{h_0\}$ and $C_1\times \{h_1\}$ 
in $\R^{n+1}$. The convex hull 
$C$ of these two sets is given by 
\[
 C= \bigl\{ (y,(1-\lambda) h_0+\lambda h_1)\mid y\in C_\lambda,\lambda\in[0,1] \bigr\}.
\]
In particular, we see that the boundary $\partial C\cap \(\R^n\times (h_0,h_1)\)$ 
is a smooth manifold.

\subsection{Proof of Theorem~\ref{thm: exa} a)}\label{subsec: constr}
By Lemma~\ref{lem: double cigar} 
we can find a sequence $\eps_i\to 0$ such that 
any $(\eps_i,\tfrac{1}{i})$-double cigar $(\Sph^2,g)$ satisfies the following:
The solution of the Ricci flow $g(t)$ with $g(0)=g$ exists on
$[0,i]$ and $(\Sph^2,g(t))$ is a $(2^{-i},\tfrac{1}{i})$-double cigar for 
all $t\in [0,i]$. 
The sequence $\eps_i$ is hereafter fixed.

We now define inductively a sequence of
$(\eps_i,\tfrac{1}{i})$-double cigars $S_i$ so that
\begin{enumerate}
 \item[(1)] $S_i$ has positive curvature and 
embeds as a convex hypersurface $S_i \subset\R^3$ 
in such a way that it is invariant under the linear action 
of $\Z_2\times \Or(2)\subset \Or(3)$.
\item[(2)] The convex domain bounded by $S_{i-1}$ is contained in the interior of the convex 
domain bounded by $S_{i}$.
\end{enumerate}

It is fairly obvious that one can find $(\eps_i,\tfrac{1}{i})$-double cigars
satisfying (1). 
In order to accomplish also (2), we 
choose $r_0$ such that $S_{i-1}\subset B_{r_0}(0)$. 
We can find an $(\eps_{i},\tfrac{1}{i})$-double cigar $(\Sph^2,g)$ 
and a fixed point $\bar p\in \Sph^2$ of the identity component of
$ \Z_2\times \Or(2)$ 
such that 
\begin{itemize}
\item[$\circ$] $B_{1/\eps_{i}}(\bar p)$ is isometric to $B_{1/\eps_i}(x_0)\subset (C,\tfrac{1}{i^2}g_0)$, 
\item[$\circ$] for some $R \gg \frac{1}{
\eps_i}$ the set $ B_{R}(\bar p)\setminus B_{2/\eps_i}(\bar p)$ 
is isometric to a subset of a cone  
\item[$\circ$] and $B_{4R}(\bar p)\setminus B_{2R}(\bar p)$ is isometric 
to $\Sph^1\times [0,2R)$ for a circle of length $4\pi r_0$. 
\end{itemize}
We can now construct an embedding of this cigar into 
$\R^3$ such that the surface $S_i$ bounds a convex domain which contains 
$B_{2r_0}(0)$. By slightly changing the embedding one can ensure that $S_i$ has 
 positive sectional curvature.
The sequence $S_i$ of embedded double cigars is now fixed.

We put $S_0=\{0\}$. 
We now define inductively  a sequence of positive numbers $r_i\to \infty$, and heights  $h_i:=\sum_{j=0}^i r_j$. Denote
 $C_j\subset \R^4$  the convex hull of 
$S_{j-1}\times \{h_{j-1}\}\subset \R^4$ and  $S_{j}\times \{h_j\}\subset \R^4$. By choosing $r_{i}$ large enough we can 
arrange for the following: 
\begin{itemize}
\item[$\circ$] The union $C_{i-1}\cup C_{i}$ is convex as well.
In fact,  $C_{i}$ converges to the cylinder  
bounded by $S_{i-1}\times [h_{i-1},\infty)$ 
for $r_{i}\to \infty$.
Hence for all large $r_{i}$ and all $p\in S_{i-1} \times \{h_{i -1}\}$ 
the union of the tangent cones $T_pC_{i-1}$ and $T_pC_{i}$ 
is properly contained in a half space.

\item[$\circ$] The hypersurface $H_i:= \bigl(\R^3\times [h_i- 1 -\sqrt{r_i},h_i-1]\bigr)\cap \partial C_i$
is arbitrarily close to a product $S_i\times [h_i- 1 -\sqrt{r_i},h_i-1]$ 
in the $C^\infty$ topology.

\item[$\circ$] 
For any open $3$-manifold $(M^3, \tilde g)$ with $K_{\tilde g} \geq 0$ containing an open subset 
$U$ isometric to $H_i$, for big $r_i$ Lemma~\ref{lem: stable double cigar} ensures that, if $(M, \tilde g(t))$ is an  immortal Ricci 
flow with $\tilde g(0)= \tilde g$ and $K_{\tilde g(t)} \geq 0$, then for some  $p\in U$ we have that 
$(M, \tilde g(t),p)$ is $\tfrac{1}{(1+\diam(S_i))^i}$-close to $( (\Sph^2,g(t)) \times \R,(\bar p, 0))$, 
where $(\Sph^2,g(t))$ is a $(2^{-i},\tfrac{1}{i})$-double cigar
for all $t\in [0,i]$. 
\end{itemize}

By construction, $C=\bigcup_{i\ge 1} C_i$ is a convex set whose boundary $\partial C$ is not smooth but the singularities 
only occur for points in $\R^3\times \{h_i\}\cap \partial C$,
see subsection~\ref{subsec: convex hulls}. 
We can now smooth $C$ as follows:
Notice that $\partial C\subset \R^4 $ can be defined as the graph of a convex
 function $f$ on $\R^3$. By construction
$T_p\partial C$ is not a half space for all $p\in S_i\times \{h_i\}$ 
and thus the gradient of $f$ jumps at the level set $S_i$. 

 We choose a smooth convex function 
$g\colon [0,\infty)\rightarrow [0,\infty)$ 
with $g\equiv 0$ on $[1,\infty)$ and $g''>0$ on $[0,1)$. 
We also define
$\varphi\colon [0,\infty) \rightarrow \R$ by 
$\varphi(t)=t+\delta\cdot g(|t-h_i|)$ for some  $\delta>0$ 
which is to be determined next. 
Notice that $\varphi$ is 
is $C^\infty$ on $[0,h_i]$ and on $[h_i,\infty)$. 
Moreover $\varphi(t)=t$ for $|t-h_i|>1$. 
On the intervals $[h_i-1,h_i]$ and $[h_i,h_i+1]$ 
the function $\varphi$ is convex. However, at $h_i$
left and right derivative of $\varphi$
differ by $2\delta g'(0)$.
Similarly there is small neighborhood $U$ of $S_i$ 
such that $f$ is $C^\infty$ on $U\setminus S_i$. 
Along the level set $S_i$ there is an outer and an inner gradient 
of $f$ and the norm of the outer gradient 
is strictly larger than the norm of the inner gradient. 
Thus
we can choose $\delta$ so small that 
 $\varphi \circ f$ is still 
a convex function. 
In addition, we know that 
the Hessian of $\varphi \circ f$ is bounded below by a small 
positive constant in a neighborhood of $S_i$.
Thus we can mollify $\varphi \circ f$ in a neighborhood of $S_i$ 
and patch things together using a cut off function.

By doing this procedure iteratively for all $i$ 
we obtain a smooth convex hypersurface $H$.
By construction the volume growth of $H$ is larger than 
linear and by Corollary~\ref{thm: immortal} we have an immortal solution $g(t)$ 
of the Ricci flow on $H$ starting with the initial metric.

By construction for any $i$ we can find a point 
$p\in H$ such that 
$(H,g(t),p)$ is $\tfrac{1}{i}$-close to $((\Sph^2,g(t)) \times \R,(\bar p, 0))$ 
where $(\Sph^2,g(t))$ is a $(2^{-i},\tfrac{1}{i})$-double cigar
for all $t\in [0,i]$. In particular, we know that 
\begin{eqnarray*}
 \sup\bigl\{K_{g(t)}(\sigma)\mid \sigma\subset TH \bigr\}&\,=\,\,\infty &\mbox{ and} \\
\inf\bigl\{ \vle_{g(t)}(B_{g(t)}(p, 1))\mid p \in H\bigr\}&\!=\,\,0 &\mbox{ for all $t$.}
\end{eqnarray*}

\begin{rem}
(a) A volume collapsed nonnegatively curved 
$3$-manifold was constructed by Croke and Karcher \cite{CK}.
Although the details are somewhat different, their example is realized as a convex hypersurface of $\R^4$ as well.

(b) 
At an informal discussion at UCSD the second named author 
was asked by Richard Hamilton, whether 
a nonnegatively curved three dimensional ancient solution
with unbounded curvature could exist. 
During this discussion Hamilton described 
possible features of a counterexample. 
 The construction in this section
 is in part inspired by what 
Hamilton had in mind.
Since the construction only gives an immortal
solution, Hamilton's question remains open nevertheless.

(c) As said in the introduction,
 a nonnegatively curved 
surface evolves immediately to bounded curvature under the Ricci flow. Giesen and Topping \cite{GieTo2} gave immortal $2$-dimensional Ricci flows
with unbounded curvature throughout time.
\end{rem}

\subsection{Proof of Theorem~\ref{thm: exa} b).}

\begin{lem}\label{lem: bounded} Let $(M,g)$ be an open manifold with $K^\C\ge 0$ 
and bounded curvature. 
Then there is an $\eps>0$ and $C$ such that, for any complete Ricci flow $g(t)$ with $g(0)=g$ 
and $K^\C_{g(t)}\ge 0$, we have $\scal_{g(t)} \leq C$   
on the interval 
$[0,\eps]$.
\end{lem}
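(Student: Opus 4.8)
The plan is to combine two ingredients: a uniform short-time lower bound on the existence time coming from bounded initial curvature (Shi's theorem), together with the fact that along a Ricci flow with $K^\C_{g(t)}\ge 0$ the scalar curvature evolves in a controlled way because the trace Harnack inequality of Brendle \cite{BreHar} applies. More precisely, let $k_0=\sup_M|\Rm_g|<\infty$. By Shi's theorem \cite{Shi1} there is a time $\eps=\eps(n,k_0)>0$ and a complete bounded-curvature solution $\bar g(t)$, $t\in[0,\eps]$, with $\bar g(0)=g$ and $\sup_M|\Rm_{\bar g(t)}|\le 2k_0$ on $[0,\eps]$; in particular $\scal_{\bar g(t)}\le C_1:=2n^2k_0$ there. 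The only subtlety is that the statement asks us to control an \emph{arbitrary} complete solution $g(t)$ with $g(0)=g$ and $K^\C_{g(t)}\ge0$, which a priori need not coincide with Shi's solution (uniqueness for non-compact flows requires a curvature bound, which we have not assumed for $g(t)$).

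First I would reduce to the bounded-curvature solution by a uniqueness argument: by Chen--Zhu \cite{ChenZhu}, among complete solutions with \emph{bounded} curvature the one with a given initial metric is unique, but $g(t)$ itself is not assumed to have bounded curvature. So instead the plan is to work directly with $g(t)$ and exploit $K^\C_{g(t)}\ge 0$. The key point: nonnegative complex sectional curvature gives $\Ric_{g(t)}\ge 0$, so $\scal_{g(t)}\ge 0$; and since $K^\C_{g(t)}\ge0$, Brendle's trace Harnack inequality holds on compact pieces, and more importantly the differential Harnack estimate yields $\partial_t\bigl(t\,\scal_{g(t)}\bigr)\ge 0$. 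That alone is not enough for an upper bound, so the real mechanism is: apply the pseudolocality-type estimate (or the localized Shi estimate, Theorem~\ref{SURF} in the excerpt) at a fixed interior scale. Since $|\Rm_g|\le k_0$ everywhere, the hypothesis of Theorem~\ref{SURF} is met at scale $\rho=\rho(k_0)$ uniformly in the center point, so we obtain $|\Rm_{g(t)}|\le\hat C(n,k_0)$ on all of $M$ for $t\in[0,\eps(n,k_0)]$. This is exactly the argument used in the proof of Theorem~\ref{mainT_pos} via \eqref{chen1}--\eqref{chen3}, except now the initial curvature bound \eqref{chen2} holds globally rather than on a compact set, so no distance-to-soul dependence enters.

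The steps, in order, would be: (i) record $\Ric_{g(t)}\ge 0$ hence $B_{g(0)}(p,r)\subset B_{g(t)}(p,r)$, so interior estimates at $g(0)$-balls transfer to $g(t)$-balls; (ii) note that $|\Rm_{g(0)}|\le k_0$ on every ball $B_{g(0)}(p,\rho)$ with $\rho:=1/\sqrt{k_0+1}$; (iii) apply Theorem~\ref{SURF} (the localized Shi estimate used in the excerpt) with these uniform data to conclude $|\Rm_{g(t)}|\le\hat C(n,k_0)$ on $M$ for $t\in[0,\mc T(n,k_0)]$, where $\mc T$ comes from the short-time existence input; (iv) deduce $\scal_{g(t)}\le C:=n^2\hat C(n,k_0)$ on $[0,\eps]$ with $\eps:=\mc T(n,k_0)$. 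The point that needs care is (iii): Theorem~\ref{SURF} as stated in the paper presupposes we already have \emph{some} solution on $[0,\mc T]$ whose curvature is bounded on the larger ball by $L/t$; here we get the $L/t$ bound on $B_{g_i(t)}(C_{j+1},1)$ from Proposition~\ref{Int_BC} in the compact-approximation setting, but for an abstract complete $g(t)$ with merely $K^\C\ge0$ and bounded initial curvature we instead invoke Proposition~\ref{curv_est_NN} is unavailable (no volume lower bound), so the cleanest route is to cite the non-collapsed-free pseudolocality for $K^\C\ge 0$ flows, or alternatively to run the argument through the compact doubling approximation of Theorem~\ref{mainT} and pass the bound $\hat C(n,k_0)$ to the limit.

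The main obstacle I expect is precisely the uniqueness/identification issue in step (iii): one must either (a) show any complete $K^\C\ge0$ solution with the given bounded initial metric automatically has bounded curvature on a short time interval — which is what the localized Shi estimate delivers, provided one has a preliminary \emph{qualitative} curvature control to start the estimate — or (b) appeal to pseudolocality. Since the paper elsewhere (Lemma~\ref{lem: complete2}, the proof of Lemma~\ref{vol collapse}) freely runs the localized Shi estimate for abstract $K^\C\ge0$ solutions once an $L/t$ interior bound is known, the honest argument here is: bounded initial curvature $\Rightarrow$ (by a Perelman-type point-picking exactly as in Proposition~\ref{Int_BC}, but now with the \emph{global} curvature bound $k_0$ replacing the volume estimate, since $|\Rm_{g(0)}|\le k_0$ rules out local curvature blow-up near $t=0$ directly) $\Rightarrow$ $\scal_{g(t)}\le C/t$ on a uniform interval $\Rightarrow$ (trace Harnack, $\partial_t(t\scal)\ge0$, giving $\scal_{g(t)}\le C$ as $t$ stays bounded below away from the blow-up, combined with the localized Shi estimate as in \eqref{chen3}) $\Rightarrow$ the stated uniform bound $\scal_{g(t)}\le C$ on $[0,\eps]$.
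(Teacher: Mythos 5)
Your overall skeleton is the right one and matches the paper's: first obtain a bound of the form $\scal_{g(t)}\le C_1/t$ on a uniform interval, then feed this together with the global initial bound $|\Rm_{g(0)}|\le k_0$ into Theorem~\ref{SURF} to upgrade it to a uniform bound $\scal_{g(t)}\le C$ on $[0,\eps]$. You also correctly identify that Chen--Zhu uniqueness cannot be invoked and that the whole content of the lemma is the first step.

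The gap is in how you propose to get the $C/t$ bound. You declare the volume lower bound unavailable ("no volume lower bound") and try to substitute the global curvature bound $k_0$ for it in the point-picking argument, asserting that $|\Rm_{g(0)}|\le k_0$ "rules out local curvature blow-up near $t=0$ directly." It does not: in the blow-up argument of Proposition~\ref{Int_BC} the contradiction comes from the rescaled balls having volume ratio bounded below (via the non-collapsing hypothesis and Bishop--Gromov), which contradicts Lemma~\ref{AVR0} for the ancient limit; the non-collapsing is also what supplies the injectivity radius bound needed to extract the limit at all. The initial curvature bound is of no help there, since after rescaling by $Q_i\to\infty$ it becomes $k_0/Q_i\to 0$ at the time $-t_iQ_i\to-\infty$ and is lost in the limit -- without non-collapsing the ancient limit could be, say, a cigar-type solution and no contradiction arises. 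The observation you are missing, and which the paper uses in one line, is that the volume lower bound \emph{is} available: an open nonnegatively curved manifold with bounded curvature has positive injectivity radius, hence satisfies the non-collapsing hypothesis \eqref{non_col_hyp}. Corollary~\ref{cor: noncollapsed} (whose proof is exactly the point-picking you describe, valid for any complete solution with $K^\ce_{g(t)}\ge 0$ satisfying the trace Harnack inequality) then gives $\scal_{g(t)}\le C_1/t$ on $(0,\eps]$, and Theorem~\ref{SURF} finishes the proof as you indicate in step (iv).
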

\begin{proof} Recall that the injectivity radius of an open 
nonnegatively curved manifold with bounded curvature is positive. 
By Corollary~\ref{cor: noncollapsed} 
for any solution $g(t)$ we know
$\scal_{g(t)} \le \tfrac{C_1}{t}$ on some interval $(0,\eps]$. 
We can now use Theorem~\ref{SURF} to see that
$\scal_{g(t)} \le C$ for all $t\in [0,\eps]$ 
with some universal $C=C((M,g))$. 
\end{proof}

\begin{lem}\label{lem: 4 exa} There is an open $4$-manifold $(M,g)$
with nonnegative curvature operator and a constant $v_0>0$ such that 
the following holds
\begin{enumerate}
 \item[$\circ$] $\vol(B_g(p,1))\ge v_0$ for all $p\in M$.
 \item[$\circ$] There is a sequence of points $p_k\in M$ 
such that $(M,g,p_k)$ converges in the Cheeger-Gromov sense 
to the Riemannian product $\Sph^2\times \R^2$ 
(where $\Sph^2$ has constant curvature $1$ and $\R^2$ is flat).
\item[$\circ$] There is a sequence of points $q_k\in M$ 
such that $(M,g,q_k)$ converges in the Cheeger-Gromov sense 
to $\R^4$ endowed with the flat metric.
\end{enumerate}
\end{lem}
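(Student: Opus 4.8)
The plan is to construct $(M,g)$ explicitly as a convex hypersurface in $\R^5$, paralleling the construction in Section~\ref{subsec: constr} but in one dimension higher and with the "double cigar" factor replaced by a round sphere. First I would fix a round metric $\bar g$ on $\Sph^2$ of constant curvature $1$ and recall that $(\Sph^2,\bar g)$ embeds as a round sphere $S\subset\R^3$. The idea is to produce a convex hypersurface $H\subset\R^5$ which, along one end, looks like $S\times\R^2$ scaled to have sphere-curvature $1$, and which somewhere else looks like a flat piece, so that rescaled pointed limits at suitable sequences of points are $\Sph^2\times\R^2$ and $\R^4$ respectively. Concretely, I would take in $\R^5=\R^3\times\R^2$ a convex body whose boundary near infinity in the $\R^2$-directions is $rS\times\R^2$ (a round cylinder over a $2$-sphere of radius $r$), chosen so the induced metric on this part is exactly the product of the round $2$-sphere of curvature $1$ with flat $\R^2$; here one simply takes the round sphere of the correct radius. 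For the second end one arranges the boundary to contain a large nearly-flat region (e.g.\ asymptotic to a half-space or a very large sphere), which contributes pointed limits converging to flat $\R^4$.

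The key steps, in order, are: (1) write down a $\Z_2\times\Or(2)$-symmetric (or just $\Or(2)$-symmetric) convex profile whose graph in $\R^5$ interpolates between a round $2$-sphere$\times\R^2$ cylindrical end and a large-radius spherical cap, smoothing any corners by the convex-hull/mollification technique of subsection~\ref{subsec: convex hulls} and the construction in subsection~\ref{subsec: constr}; (2) check $\Rm_g\ge 0$: since $H$ is a convex hypersurface in Euclidean space, the Gau{\ss} equation gives that $g$ has nonnegative curvature operator automatically, so this is immediate provided convexity is maintained through the smoothing; (3) verify the uniform noncollapsing $\vol(B_g(p,1))\ge v_0$: the round-sphere-cross-$\R^2$ end is noncollapsed by inspection, the flat end is noncollapsed, and the bounded interpolating region is compact hence noncollapsed, so $v_0$ exists; (4) produce $p_k\to\infty$ along the cylindrical end so that $(M,g,p_k)$ Cheeger--Gromov converges to $\Sph^2\times\R^2$ — this is clear because a neighborhood of $p_k$ of radius $k$ is isometric (or $\tfrac1k$-close) to a ball in $\Sph^2\times\R^2$; (5) produce $q_k\to\infty$ into the flat (large-sphere) region so that $(M,g,q_k)$ converges to flat $\R^4$ — again because larger and larger balls around $q_k$ are $\tfrac1k$-close to Euclidean balls. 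All of this uses only standard compactness for pointed manifolds with bounded geometry (which holds here by convexity and the explicit description).

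I expect the main obstacle to be step (1): arranging a single convex profile function on $\R^3$ (or rather an $\Or(2)$-invariant convex body in $\R^5$) that simultaneously has a genuine round-$\Sph^2$-times-flat-$\R^2$ asymptotic end \emph{with the induced metric exactly a product of curvature-$1$ sphere and flat plane}, a genuinely (nearly-)flat large region elsewhere, and remains convex and smooth after gluing and mollification — including checking that the corner set where the two building blocks meet has tangent cones properly contained in a half-space, so that the convex-hull construction of subsection~\ref{subsec: convex hulls} applies and the mollification does not destroy convexity or the required geometry of the two ends. Once the hypersurface is in hand, everything else (nonnegative curvature operator, noncollapsing, and the two Cheeger--Gromov limits) follows by the same soft arguments already used for Theorem~\ref{thm: exa}a).
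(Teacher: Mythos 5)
Your plan founders precisely at the step you flag as the main obstacle, and the obstruction is structural rather than technical. If a complete convex hypersurface $\partial K\subset\R^5$ contains a region that is \emph{exactly} the standard product $\Sph^2(1)\times V$ with $V\subset\R^2$ unbounded (your ``round cylinder over a $2$-sphere near infinity in the $\R^2$-directions''), then $\partial K$ contains complete affine lines $\{x\}\times\ell$ in two independent directions of the $\R^2$-factor; a closed convex set containing a complete line splits off that line, so $K=B^3\times\R^2$ and $\partial K=\Sph^2\times\R^2$ globally, leaving no room for any flat region. More generally, the boundary of an unbounded, line-free convex body in $\R^5$ is homeomorphic to $\R^4$ and has a single end, so the ``one end of each type'' picture cannot be realized by a convex hypersurface. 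Finally, even after dropping exactness, a pointed Cheeger--Gromov limit $\Sph^2(1)\times\R^2$ at $p_k$ requires $\eps_k$-closeness to the product on balls of radius $1/\eps_k$ with $\eps_k\to 0$, while a single bounded region of the form $\Sph^2(1)\times D^2(L)$ only yields closeness on a ball of radius comparable to the fixed $L$. One is therefore forced to use an infinite sequence of approximate product regions of larger and larger extent, interleaved along the single end with larger and larger nearly flat regions producing the $\R^4$ limits, and to interpolate convexly between a neck at extrinsic distance $1$ from an axis and a huge almost flat region. That interpolation is exactly what Hamilton's cigar provides, which is why the cigars cannot be discarded from the construction.

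This is what the paper does: it builds $M^3\subset\R^4$ as in the proof of part a), but from a sequence of $(\tfrac1i,1)$-double cigars $S_i$ (fixed scale $\lambda=1$, each $S_i$ containing an isometric copy of $\Sph^1(2R_{i-1})\times[-R_{i-1},R_{i-1}]$ with $R_{i-1}\to\infty$), and then defines $M^4\subset\R^5$ as the $\Or(3)\times\Z_2$-invariant convex hypersurface meeting $\R^4$ in $M^3$. Under this rotation the cigar necks (extrinsic radius tending to $1$) become round $2$-spheres of curvature tending to $1$, yielding the $\Sph^2\times\R^2$ limits at points $p_k$ in the neck regions, while the huge circles become huge $2$-spheres, yielding the flat $\R^4$ limits at points $q_k$; uniform noncollapsing holds because the cigar scale is fixed. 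Your remaining steps (Gauss equation for nonnegative curvature operator, soft pointed compactness) are fine, but they rest on a hypersurface that cannot exist as you describe it.
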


\begin{proof} The construction of $(M^4,g)$ is very similar 
to the one in subsection~\ref{subsec: constr}.
There is a sequence of embedded $(\tfrac{1}{i},1)$-double 
cigars $S_i\subset \R^3$ so that 
\begin{enumerate}
 \item[$\circ$] $S_i$ is invariant under an $\Z_2\times \Or(2)\subset \Or(3)$-action
fixing the origin. 
\item[$\circ$] The interior of the convex domain bounded by $S_i$ contains
$S_{i-1}$.
\item[$\circ$]  $S_i$ 
contains a subset which is isometric to $\Sph^1\times [-R_{i-1},R_{i-1}]$ 
where $\Sph^1$ is  a circle of radius $2R_{i-1}=2\diam(S_{i-1})\to \infty$.
\end{enumerate}

Analogous to subsection~\ref{subsec: constr}
one can then construct 
a smooth convex hypersurface $(M^3,g)\subset \R^4$ 
with $\Or(2)\times \Z_2$ symmetry satisfying: 
There is $p_i\in M$ such that $(M^3,p_i)$ is 
$e^{-R_i}$-close to $S_i\times \R$.
Moreover, it is clear from the construction  that  
$(M^3,g)$ is uniformly volume non collapsed.
We now define $M^4$ as the unique convex hypersurface 
in $\R^5$ whose intersection with $\R^4$ is given by 
$(M^3,g)$ and which has a $\Or(3)\times \Z_2$ symmetry.
It is straightforward to check that $M^4$ with the induced metric  
has the claimed properties.
\end{proof}

\begin{proof}[Proof of Theorem~\ref{thm: exa} b)]
Let $(M^4,g)$ be as in Lemma~\ref{lem: 4 exa}. 
We consider a solution $g(t)$ of the Ricci flow coming out of the proof 
of Theorem~\ref{mainT}. 
Using $(M^4,g, q_i)$ converges to $\R^4$ 
in the Cheeger-Gromov sense, 
it follows that the Ricci flow on the compact
approximations $(M_i,g_i(t))$ converging to $(M,g(t))$ 
exists until $T_i\to \infty$. 
By the proof of Theorem~\ref{mainT} we can assume that 
$g(t)$ is immortal. Moreover, it is clear from the proof that $g_i(t)$ and hence $g(t)$ 
have nonnegative curvature operator.
In particular we have an immortal complete 
solution $g(t)$ with $g(0)=g$ and $g(t)$ satisfies the trace Harnack 
inequality. 

By Corollary~\ref{cor: noncollapsed} it follows that $\scal_{g(t)}\le \tfrac{C}{t}$
for $t\in (0,\eps]$. 
We claim that $g(1)$
has unbounded curvature.
Suppose on the contrary that  $\scal_{g(1)} \leq C$. 
The trace Harnack inequality implies that  $\scal_{g(t)} \leq \tfrac{C}{t}$ for $t\in (0,1]$. 

We now consider the sequence $(M,g,p_i)$ 
converging to $(\Sph^2\times \R^2,p_\infty)$ in the Cheeger-Gromov sense. 
Applying Theorem~\ref{SURF} it is easy that there is a universal 
constant $C_2$ such that for any $r$ we can find $i_0$ 
such that $\scal_{g(t)}\le C_2$ on $B_{g(0)}(p_i,r)$ for all
$t\in [0,1]$ and $i\ge i_0$. 
By Hamilton's compactness theorem $ (M,g(t),p_i)$ subconverges to a solution $g_\infty(t)$ of the 
Ricci flow on $ (\Sph^2\times \R^2,p_\infty)$  with bounded curvature
such that $g_\infty(0)$ is given by the product metric 
($\Sph^2$ with constant curvature $1$), $t\in [0,1]$. 
On the other hand, for the
the unique solution (with bounded curvature) the curvature blows up 
at time $1/2$ -- a contradiction. 

In summary, we can say
$\scal_{g(t)}$ is bounded for $t\in (0,\eps]$ and that $\scal_{g(1)}$ 
is unbounded. By Lemma~\ref{lem: bounded} 
there must be a minimal time $t_0\in (\eps,1]$ such that $g(t_0)$ 
has unbounded curvature. From the trace Harnack inequality it follows 
that $g(t)$ has unbounded curvature for all $t\ge t_0$. 

Thus $M^4$ endowed with the
rescaled Ricci flow $\tg(t):=\tfrac{1}{t_0-\eps/2}g(\eps/2+t(t_0-\eps/2))$ 
satisfies the conclusion of Theorem~\ref{thm: exa} b).
\end{proof}

\appendix

\section{Open manifolds of nonnegative curvature} \lb{AppA}

Recall that a set $C$ is called totally convex if for every geodesic segment $\Gamma$ joining two points in $C$, we have $\Gamma \subset C$, and that the normal bundle of a submanifold $S \subset M$ is $\nu(S) = \bigcup_p \{v \in T_p M\, |\, v \perp T_p S\}$. 
We start with the Soul Theorem.
\bt[Cheeger-Gromoll-Meyer, cf.~\cite{ChGr, GroMe}] \lb{sl_thm} Let $(M^n, g)$ be an open ma\-nifold with $K_g \geq 0$. Then there is a closed, totally geodesic submanifold $\Sigma \subset M$ which is totally convex and with $0\leq {\rm dim} \Sigma < n$. $\Sigma$ is called a soul of $M$, and $M$ is diffeomorphic to $\nu(\Sigma)$. If $K_g > 0$, then a soul of $M$ is a point, and so $M$ is diffeomorphic to $\re^n$.
\et

Here is a further property about the soul.
\bt[Strake, cf.~\cite{Strake}] \lb{Strake-thm} 
Let $(M^n, g)$ be an open manifold with $K_g \geq 0$ and $\Sigma^k$ be the soul of $M$. If the holonomy group of $\nu(\Sigma)$ is trivial then $M$ is isometric to $\Sigma \times \re^{n - k}$, where $\re^{n - k}$ carries a complete metric of $K \geq 0$. 
\et
Fix $p \in \Sigma$ and let $d_p = d_g(\ccdot, p)$, where $d_g$ is the Riemannian distance. It is known (see e.g.~\cite{Drees}) that $b$ (see subsection \ref{subsec: chgr}) and $d_p$ are asymptotically equal:
\begin{lem} \lb{Drees}
 There exists a function $\theta(s)$ with $\lim_{s\to \infty}\theta(s) = 0$  such that
$$(1 - \theta \circ d_p) d_p \leq b \leq d_p,$$
and for all $x, y \in M$ it holds $|b(x) - b(y)| \leq d_g(x, y)$.
\end{lem}

It is useful to recall that $b$ is indeed the distance from an appropriate set:
\begin{lem}[Wu, cf.~\cite{Wu_el}] \lb{wu_le}
Let $a \in \re$ and let $C_a = \{ x \in M : b (x) \leq a\}$. Then  $b|_{int(C_a)} = a - d(\cdot, \partial C_a)$.
\end{lem}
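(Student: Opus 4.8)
The statement to prove is that for $a\in\re$ and $C_a=\{x\in M: b(x)\le a\}$, one has $b|_{\mathrm{int}(C_a)}=a-d(\cdot,\partial C_a)$. The plan is to prove the two inequalities separately, exploiting only that $b$ is a convex $1$-Lipschitz function whose sublevel sets $C_\ell$ form the Cheeger-Gromoll exhaustion, together with property (3) of the $C_\ell$ recorded in subsection~\ref{subsec: chgr}, namely $C_s=\{x\in C_\ell: d_g(x,\partial C_\ell)\ge \ell-s\}$ for $s<\ell$.

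First I would prove $b(x)\le a-d(x,\partial C_a)$ for $x\in\mathrm{int}(C_a)$. Set $s=b(x)$, so $s<a$ since $x$ is interior and $b$ is continuous (indeed, if $b(x)=a$ then $x$ lies on a level set of a convex function that is not locally constant near $x$, hence in $\partial C_a$; I would phrase this via the $1$-Lipschitz property: any neighbourhood of $x$ contains points with $b>a$ unless $b$ is constant $=a$ nearby, which the exhaustion property rules out). Then $x\in C_s$, and property (3) applied with $\ell=a$ gives exactly $d_g(x,\partial C_a)\ge a-s=a-b(x)$, i.e. $b(x)\le a-d_g(x,\partial C_a)$.

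For the reverse inequality $b(x)\ge a-d(x,\partial C_a)$, I would use that $b$ is $1$-Lipschitz: pick $y\in\partial C_a$ realizing $d(x,\partial C_a)$ (it exists since $\partial C_a$ is compact), so $b(y)=a$ and $|b(x)-b(y)|\le d_g(x,y)=d(x,\partial C_a)$, hence $b(x)\ge a-d(x,\partial C_a)$. Combining the two inequalities yields the claimed identity on $\mathrm{int}(C_a)$.

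The only genuinely delicate point is the claim that $x\in\mathrm{int}(C_a)$ forces $b(x)<a$, i.e. that $b$ does not attain the value $a$ in the interior of $C_a$; this is where the specific structure of the Cheeger-Gromoll exhaustion (rather than mere convexity) is used, through property (3), which shows $C_a$ is precisely the $\ell-a$ offset of $C_\ell$ for any $\ell>a$ and in particular that $\partial C_a=b^{-1}(a)$. Once that is in hand, both inequalities are one-line consequences of the exhaustion property and $1$-Lipschitzness, so I expect the write-up to be short. Everything else is routine.
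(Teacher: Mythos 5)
The paper does not prove this lemma at all---it is quoted from Wu's paper \cite{Wu_el} as background---so there is no in-paper argument to compare against; I can only assess your proposal on its own terms, and it contains a genuine error. In your first step you correctly extract from property (3) (with $\ell=a$, $s=b(x)$) the inequality $d_g(x,\partial C_a)\ge a-b(x)$, but this rearranges to $b(x)\ge a-d_g(x,\partial C_a)$, which is the \emph{lower} bound on $b$, not the upper bound you claim (``i.e.\ $b(x)\le a-d_g(x,\partial C_a)$'' is a sign slip: $d\ge a-b$ is equivalent to $b\ge a-d$). Your second step then proves this same lower bound a second time via the $1$-Lipschitz property. The net effect is that the inequality $b(x)\le a-d(x,\partial C_a)$, equivalently $d(x,\partial C_a)\le a-b(x)$, is never established---and this is the substantive half of the lemma: it asserts that from $x$ one can actually reach $\partial C_a$ within distance $a-b(x)$, which fails for a general convex $1$-Lipschitz exhaustion function (one that is locally constant somewhere, say) and genuinely uses the structure of the Busemann function.

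The missing half can still be recovered from property (3), but only by using the \emph{reverse} inclusion of the set equality $C_s=\{y\in C_a: d_g(y,\partial C_a)\ge a-s\}$: put $s:=a-d_g(x,\partial C_a)$ (note $s<a$ since $x$ is interior); then $x$ belongs to the right-hand side, hence $x\in C_s$, i.e.\ $b(x)\le s=a-d_g(x,\partial C_a)$. Equivalently, for every $s'<b(x)$ one has $x\notin C_{s'}$ and therefore $d_g(x,\partial C_a)<a-s'$; letting $s'\uparrow b(x)$ gives the same conclusion. If you patch the argument this way, be aware that you are then using property (3) as a set equality in both directions, at which point the lemma is essentially a reformulation of that property (which the paper also states without proof); a self-contained proof in the spirit of Wu would instead derive the upper bound from the definition of $b$ as a supremum of the functions $\lim_{s\to\infty}(s-d(\gamma(s),\cdot))$ and the existence of asymptotic rays along which $b$ increases at unit speed. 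Your handling of the point $b<a$ on $\mathrm{int}(C_a)$ is acceptable but note that it, too, is really a consequence of the hard inequality rather than an independent preliminary.
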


\section{Convex sets in Riemannian manifolds} \lb{AppB}

Let $C$ be a compact totally convex set ({\sc tcs}) in a manifold $M$. We define the tangent cone at $p \in \partial C$ as
$$T_p C  =  {\rm Clos}\{v \in T_p M\, : \, \exp_p (t v/|v|) \in C \text{ for some } t > 0\}.$$
By convexity of $C$, this is a convex cone in $T_p M$. The normal space is defined as
\begin{align*}
N_p C  = \{v\in {\rm span}(T_p C) \, : \, \<v, w\> \le 0 \ \text{for all } w \in T_p C \setminus \{0\}\}.
\end{align*}

Here is a useful characterization of the normal space.
\bp[Yim, cf.~\cite{Yim2}] \lb{Yim}
Let $\{C_a\}$ be a family of {\sc tcs}. Consider $a > b$ with $a - b < \delta$, where $\delta > 0$ is chosen so that the projection $C_a \fle C_b$ is well-defined (i.e. for all $q\in C_a$ there is a unique $q^\ast \in C_b$ with $d(q, q^\ast) = d(q, C_b)$). For each $p \in \partial C_b$, 
$N_p C_b$ is the convex hull of the set of vectors $v \in \text{\rm span}(T_p C)$ such that the geodesic $\gamma(s) = \exp_p(s v/|v|)$  is the shortest path from $p$ to some point in $\partial C_{a}$.
\ep 

Further details about the structure of the sublevel sets of the Busemann function $b$ are given by
\begin{lem}[Guijarro-Kapovitch, cf.\cite{GK}] \lb{GuiKa}
Consider $C_\ell = b^{-1}\((-\infty, \ell]\) \subset M$ and $p \in \partial C_\ell$. Take $\gamma$ any minimal geodesic from $p= \gamma(0)$ to any point of the soul. Then there exists $\eps(\ell)$, with $\eps(\ell) \to 0$ as $\ell \to \infty$ such that if $v\in T_p M$ is a unit vector with $\angle(v, \dot{\gamma}(0)) < \frac{\pi}{2} - \eps(\ell)$, then $v \in T_p C_\ell$.
\end{lem}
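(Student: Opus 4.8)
The plan is to reduce the claim to an estimate on the one-sided directional derivative of the Busemann function at $p$, followed by an elementary argument on the unit sphere of $T_pM$. Throughout, abbreviate $u:=\dot\gamma(0)$, and for a unit vector $v\in T_pM$ set $h(v):=\lim_{s\to0^+}\tfrac{b(\exp_p(sv))-b(p)}{s}$, the limit existing since $b$ is convex along geodesics. The first, routine, observation is that if $h(v)<0$ then $b(\exp_p(sv))<b(p)=\ell$ for small $s>0$, hence $\exp_p(sv)\in C_\ell$ and so $v\in T_pC_\ell$; thus it suffices to find $\eps(\ell)\to0$ such that $\angle(v,u)<\tfrac\pi2-\eps(\ell)$ forces $h(v)<0$. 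I would also record two structural facts about $h$: it is $1$-Lipschitz on $T_pM$ because $b$ is $1$-Lipschitz, and it is sublinear (positively homogeneous and subadditive), being the directional derivative of a geodesically convex function — subadditivity follows by comparing $\exp_p(t(v+w))$ with the midpoint of a short geodesic joining $\exp_p(2tv)$ and $\exp_p(2tw)$, the $O(t^2)$ defect being absorbed by the $1$-Lipschitz bound. Sublinearity gives $h(v)=\sup_{\xi\in K}\langle\xi,v\rangle$ for a compact convex set $K\subset\overline B_1(0)\subset T_pM$.

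The geometric core is to show that the radial slope $h(u)$ tends to $-1$ as $\ell\to\infty$. Let $q=\gamma(L)$ denote the endpoint of $\gamma$ on the soul, $L=d(p,q)$, and $\ell_0:=\max_\Sigma|b|<\infty$; assume $\ell>\ell_0$. Convexity of $s\mapsto b(\gamma(s))$ on $[0,L]$ gives $h(u)\le\tfrac{b(q)-\ell}{L}$, while $|h(u)|\le1$. To bound $L$ I would invoke Lemma~\ref{Drees}: choosing the soul point there to be an arbitrary $p_0\in\Sigma$, one has $b\le d(\cdot,p_0)$, so $d(p,p_0)\ge b(p)=\ell\to\infty$; then $(1-\theta(d(p,p_0)))\,d(p,p_0)\le b(p)=\ell$ with $\theta\to0$ yields $d(p,p_0)\le(1+o(1))\ell$, hence $L\le d(p,p_0)+\diam\Sigma=(1+o(1))\ell$. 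Combined with $L\ge\ell-\ell_0$ and $|b(q)|\le\ell_0$, this gives $\tfrac{b(q)-\ell}{L}=-1+o(1)$, so $h(u)=-(1-\delta_\ell)$ with $\delta_\ell\to0$ and $\delta_\ell\ge0$.

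Finally, the sphere-geometry step. From $\sup_{\xi\in K}\langle\xi,u\rangle=h(u)=-(1-\delta_\ell)$ and $|\xi|\le1$ we get $|\xi+u|^2\le2\delta_\ell$ for every $\xi\in K$, whence for any unit $v$,
\[
h(v)=\sup_{\xi\in K}\langle\xi,v\rangle\le-\langle u,v\rangle+\sqrt{2\delta_\ell}=-\cos\angle(v,u)+\sqrt{2\delta_\ell}.
\]
Therefore $h(v)<0$ whenever $\cos\angle(v,u)>\sqrt{2\delta_\ell}$, i.e. whenever $\angle(v,u)<\tfrac\pi2-\eps(\ell)$ with $\eps(\ell):=\arcsin\sqrt{2\delta_\ell}\to0$ (taking $\eps(\ell)$ arbitrarily large, so that the hypothesis is vacuous, for those small $\ell$ with $2\delta_\ell>1$); by the reduction of the first paragraph, such $v$ lies in $T_pC_\ell$. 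The step I expect to cost the most effort is the radial slope estimate of the second paragraph, since it is where the geometry genuinely enters: one must convert the level value $\ell$ into a metric length via Lemma~\ref{Drees}, i.e. know that minimal geodesics from $\partial C_\ell$ to the soul have length $(1+o(1))\ell$. The reduction to the descent cone of $b$ and the convex-analytic estimate on $h$ are then routine; the one minor technical point to state carefully is the sublinearity of $h$ on the tangent space, as sketched above.
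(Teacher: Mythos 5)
The paper does not prove this lemma at all: it is quoted verbatim as background in Appendix~\ref{AppB} with a citation to Guijarro--Kapovitch \cite{GK}, so there is no in-paper argument to compare against. Your proof is correct and self-contained modulo Lemma~\ref{Drees}. The reduction to the descent cone $\{h<0\}$ of $b$ is sound (if the one-sided derivative is negative, nearby points on the geodesic lie in the open sublevel set, hence $v\in T_pC_\ell$ by the definition of the tangent cone in Appendix~\ref{AppB}); the sublinearity of $h$ and its representation as a support function $h=\sup_{\xi\in K}\langle\xi,\cdot\rangle$ with $K\subset \overline{B}_1(0)$ are standard facts about derivatives of geodesically convex Lipschitz functions, and your midpoint argument for subadditivity is the right way to justify them; the radial-slope estimate $h(\dot\gamma(0))\to -1$ follows correctly from convexity of $b\circ\gamma$, the $1$-Lipschitz bound, compactness of $\Sigma$, and the asymptotic comparison $b\sim d_{p_0}$ of Lemma~\ref{Drees}; and the elementary estimate $|\xi+u|^2\le 2\delta_\ell$ for $\xi\in K$ cleanly converts this into the angle bound, with the degenerate case $2\delta_\ell>1$ handled by making the hypothesis vacuous. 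Note also that your $\eps(\ell)$ is uniform in $p\in\partial C_\ell$ and in the choice of soul point $q$, as the statement requires, since the only $p$-dependence enters through $\theta(d(p,p_0))$ with $d(p,p_0)\ge\ell$. The route differs from the one in \cite{GK}, which argues via Toponogov-type angle comparison between geodesics to the soul and to far-away points; your version trades comparison geometry for convex analysis of the Busemann function and is arguably more elementary given that Lemma~\ref{Drees} is already available, though it yields the tangent-cone statement only for sublevels of $b$ rather than for the more general convex exhaustion treated in \cite{GK}. This is exactly the form used in the paper, so the argument is adequate for its purpose here.
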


The following theorem gives the existence of a {\bf tubular neighborhood} $U$:
\bt[Walter, cf.~\cite{Walter}] \lb{tubular} For each closed locally convex set $A \subset (M, g)$, there is an open set $U \subset A$ such that

1) For each $q \in U$, there is a unique $q^\ast\in A$ with $d(q, q^\ast) = d(q, A)$,
and a unique minimal geodesic from $q$ to $q^\ast$ which lies entirely in $U$.

2) $d_A$ is $C^1$ in $U \setminus A$ and twice differentiable almost everywhere in $U \setminus A$.

\et

Let us recall the {\bf Hessian bounds in the support sense}: 
\bde[Calabi, cf.~\cite{Cal}] \lb{def_spt}
Let $f: (M, g) \fle \re$ be continuous. We say that $\nabla^2 f|_p \geq h(p)$ in the support sense, for some function $h: M \fle \re$, if for every $\eps > 0$ there exists a smooth function $\varphi_\eps$ defined on a neighborhood of $p$ such that

1)  $\varphi_\eps (p) = f(p)$ and $\varphi_\eps \leq f$ in some neighborhood of $p$.

2) $\nabla^2 \varphi_\eps|_p \geq (h - \eps) g_p$.

\noindent Such functions $\varphi_\eps$ are called lower support functions of $f$ at $p$. One can analogously define $\nabla^2 f \leq h$ at $p$ in the support sense. 
\ede

\section{Miscellanea of Ricci flow results} \lb{AppC}

\subsection{Smooth convergence of manifolds and flows}

\bde[Cheeger-Gromov convergence] \lb{def_conv} (a) Consider a sequence of complete manifolds $(M^n_i, g_i)$ and choose $p_i \in M_i$. We say that $(M_i, g_i, p_i)$ converges to the pointed Riemannian $n$-manifold $(M_\infty, g_\infty, p_\infty)$ if there exists
\bi
\item[(1)] a collection $\{U_i\}_{i \geq 1}$ of compact sets with $U_i \subset U_{i + 1}$,  $\cup_{i\geq 1} U_i = M$ and  $p_\infty \in {\rm int}(U_i)$ for all $i$, and

\item[(2)] $\phi_i: U_i \fle M_i$  diffeomorphisms onto their image, with $\phi_i(p_\infty) = p_i$
\ei
such that $\phi^\ast_i g_i \to g_\infty$ smoothly on compact subsets of $M_\infty$, meaning that
$$|\nabla^m(\phi_i^\ast g_i - g_\infty)| \flecha 0 \qquad \text{as} \quad i \to \infty \quad \text{ on } \  K \quad \text{for all}\quad m \geq 0$$
for every compact set $K \subset M$. Here $|\cdot|$ and $\nabla$ are computed with respect to any fixed background metric.

(b) A sequence of complete evolving manifolds $(M_i, g_i(t), p_i)_{t \in I}$ converges to a pointed evolving manifold $(M_\infty, g_\infty(t), p_\infty)_{t \in I}$ if we have (1) and (2) as before such that $\phi^\ast_i g_i(t) \to g_\infty(t)$ smoothly on compact subsets of $M_\infty \times I$.
\ede

\bt[Hamilton, cf.~\cite{Hamform}] \lb{locHCT}
Let $(M_k, g_k(t), x_k)_{t \in (a, b]}$ be complete $n$-dimen\-sional Ricci flows, and fix $t_0 \in (a, b]$. Assume the following two conditions:
\bi
\item[(1)] For each compact interval $I \subset (a, b]$, there is a constant $C = C(I) < \infty$ so that for all $t \in I$ 
$$|\Rm|_{g_k(t)} \leq C \qquad \text{on} \qquad B_{g_k(0)}(x_k, r)\hspace*{1em} \mbox{ for all \quad $k\ge k_0(r)$.}$$

\item[(2)] There exists $\delta > 0$ such that $inj_{g_k(t_0)} (x_k) \geq \delta.$
\ei
Then, after passing to a subsequence, the solutions converge smoothly to a complete Ricci flow solution $(M_\infty, g_\infty(t), x_\infty)$ of the same dimension, defined on $(a, b]$. 
\et

Some authors quote stronger versions of this theorem,  
where the curvature bound $C$ is allowed to increase arbitrarily with 
$r$. However, in the proof one then  runs into trouble if one wants to verify completeness 
of the limit metrics for different times. 
Lemma~\ref{lem: complete2} can be regarded as a 
way to circumvent this problem.

Under  bounded curvature, condition (2)  above can be guaranteed by ensuring a lower bound on the volume (see  \cite[Theorem 4.3]{ChGrT}):
\bt[Cheeger-Gromov-Taylor] \lb{ChGrT} Let $B_g(p, r)$ be a metric ball in a complete Riemannian manifold $(M^n, g)$ with $\lambda \leq K_g|_{B_g(p, r)} \leq \Lambda$ for some constants $\lambda, \Lambda$. Then, for any constant $r_0$ such that $4 r_0 < \min\{\pi/\sqrt{\Lambda}, r\}$ if $\Lambda > 0$, we have
$${\rm inj}_g(p) \geq r_0 \(1 + \frac{V_\lambda^n(2 r_0)}{\vle_g \(B_g(p, r_0)\)}\)^{-1},$$
where $V_\lambda^n(\rho)$ denotes the volume of the ball of radius $\rho$ in the $n$-dimensional space with constant sectional curvature $\lambda$.
\et

\subsection{Curvature estimates}

Shi's local derivative estimates ensure that if the curvature is bounded on $B_{g(0)}(p, r) \times [0, T]$, then we also have bounds on all covariant derivatives of the curvature on the smaller set $B_{g(0)}(p, r/2) \times (0, T]$, where such bounds blow up to infinity as $t \to 0$. Such a degeneracy can be avoided by making the stronger assumption of having bounded derivatives of the curvature in the initial metric.
\bt[Lu-Tian, \cite{LT}] \lb{ModShi} For any positive numbers $\alpha, K, K_\ell, r, n \geq 2, m \in \mathbb N$, let $M^n$ be a manifold with $p \in M$, and $g(t)$, $t\in [0, \tau]$ where $\tau \in (0, \alpha/K)$, be a Ricci flow on an open neighborhood $\mc U$ of $p$ containing $\overline B_{g(0)}(p, r)$ as a compact subset. If
$$|\Rm_{g(t)}|(x) \leq K \qquad \text{for all} \ x \in B_{g(0)}(p, r) \quad \text{and} \quad t\in [0, \tau]$$
$$|\nabla^\ell \Rm_{g(0)}|(x) \leq K(\ell) \qquad \text{for all} \ x \in B_{g(0)}(p, r) \quad \text{and all} \quad \ell \geq 0,$$
then there exists $C = C(\alpha, K, K(\ell), r, m, n)$ such that
$$|\nabla^m \Rm_{g(t)}| \leq C \qquad \text{on} \quad \overline B_{g(0)}(p, r/2) \times [0, \tau].$$

\et
Next we state a result 
of Simon \cite[Theorem 1.3]{Simon0}. We actually 
use  a simplified and coordinate free 
version, see also
Chen \cite[Corollary 3.2]{strong}:
\bt[M.~Simon, B.~L.~Chen] \lb{SURF}
Let $(M^n, g(t))$, with $t\in [0, T]$, be a complete Ricci flow. Assume we have the curvature bounds
\bec \lb{R_bdd_0}
|\Rm|_{g(0)} \leq \rho^{-2}  \qquad \text{on } \quad B_{g(0)}(p, \rho)
\eec
and
\bec \lb{R_bdd_t}
|\Rm|_{g(t)}(x) \leq K/t  \qquad \text{for } \quad x \in B_{g(0)} (p, \rho) \quad \text{ and } \quad t\in (0, T].
\eec
Then there exists a constant $C$ depending only on $n$
such that
$$|\Rm|_{g(t)}(x) \leq 4\, e^{CK} \rho^{-2} \qquad \text{ for all} \qquad x \in B_{g(0)} (p, \rho/2), \quad  t\in [0, T].$$
\et

\begin{footnotesize}
\hspace*{0.3em}{\sc University of M\"unster,
Einsteinstrasse 62, 48149 M\"unster, Germany}\\
\hspace*{0.3em}{$\begin{array}{l}\mbox{\em E-mail addresses:}\\\hspace*{1em}\end{array} $}{$\begin{array}{l}\mbox{\sf E.Cabezas-Rivas@uni-muenster.de,}\\ \mbox{\sf wilking@math.uni-muenster.de}\end{array}$}
\end{footnotesize}

\end{document}